\numberwithin{equation}{section}
\newtheorem{theorem}{Theorem}[section]
\newtheorem{corollary}[theorem]{Corollary}
\newtheorem{lemma}[theorem]{Lemma}
\newtheorem{example}[theorem]{Example}
\newtheorem{definition}[theorem]{Definition}
\def\e{{\bf{e}}}
\def\b{{\bf{b}}}
\renewcommand{\eqref}[1]{{\rm (\ref{#1})}}
\begin{document}

\title[Fundamental relations in  quantum cluster algebras]
{Fundamental relations in  quantum cluster algebras}

\author{Junyuan Huang, Xueqing Chen, Ming Ding and Fan Xu}
\address{School of Mathematics and Information Science\\
Guangzhou University, Guangzhou 510006, P.R.China}
\email{jy4545@e.gzhu.edu.cn (J.Huang)}
\address{Department of Mathematics,
 University of Wisconsin-Whitewater\\
800 W. Main Street, Whitewater, WI.53190. USA}
\email{chenx@uww.edu (X.Chen)}
\address{School of Mathematics and Information Science\\
Guangzhou University, Guangzhou 510006, P.R.China}
\email{dingming@gzhu.edu.cn (M.Ding)}
\address{Department of Mathematical Sciences\\
Tsinghua University\\
Beijing 100084, P.~R.~China} \email{fanxu@mail.tsinghua.edu.cn (F.Xu)}





\keywords{Quantum  cluster algebra, quantum  cluster variable, principal coefficient, fundamental relation}

\dedicatory{Dedicated to Professor Bangming Deng on the Occasion of His 60th Birthday}

\maketitle

\begin{abstract}
Let $\mathcal{A}_{q}$ be an arbitrary   quantum cluster algebra with principal coefficients.  We give the fundamental relations between the quantum cluster variables arising from one-step mutations from the initial cluster in  $\mathcal{A}_{q}$.
Immediately and directly, we obtain an algebra homomorphism from the corresponding (untwisted) quantum group to $\mathcal{A}_{q}$.\end{abstract}


\section{Background}

Cluster algebra was invented by Fomin and Zelevinsky \cite{ca1,ca2}  in their purpose to study the total positivity \cite{G3} and Lusztig's dual canonical bases in coordinate rings and their $q$-deformations \cite{G1,G2}.
As a subalgebra of the field of rational functions in $n$ variables, cluster algebra is  generated by a family of generators called cluster variables obtained recursively through mutations. As the quantization of cluster algebra,
quantum cluster algebra was later introduced by Berenstein and Zelevinsky \cite{AA}.

The Hall algebra  was introduced by Ringel \cite{R1} to encode a finitary abelian category.  Ringel proved that if $A$ is a representation-finite hereditary algebra over a finite field,
the Ringel--Hall algebra of $A$ is isomorphic to the positive part of the corresponding
quantum group \cite{R1,R2}. A connection between the representation theory
of algebras and Lie theory can then be established via Ringel's approach,  as well as an algebraic framework for studying the Lie theory from Hall algebras associated to various abelian and triangulated categories.
Recall that Ringel--Hall
algebras satisfy the so-called fundamental relations, which are similar to the defining relations-
 the quantum Serre relations for quantized enveloping algebras. It was shown in \cite{R3}
 that for Ringel-Hall algebras of hereditary algebras,  fundamental
relations differ from the quantum Serre relations only by a twist of the multiplication. For a more general setting, Chen and Deng \cite{CD}
showed that the Ringel--Hall algebra of a finitary category over a finite field satisfies fundamental relations.

The cluster multiplication theorem \cite{ck,cdz,XX,X} revealed the similarity between the multiplication
in a cluster algebra and that in a dual Hall algebra, and then it is natural to construct a framework to explicitly relate the dual Hall algebra with the quantum
cluster algebra.  Let $Q$ be a finite acyclic quiver, and let $\mathcal{AH}_{q}(Q)$ be the subalgebra of a certain skew-field
 of fractions generated by quantum cluster characters. A surjective algebra homomorphism from a certain dual Hall
algebra to $\mathcal{AH}_{q}(Q)$ was described in \cite{DXZ,FPZ}, where the quantum cluster algebra can be realized as a sub-quotient algebra
of this Hall algebra. By using this algebra homomorphism, one may obtain fundamental relations indirectly in the corresponding acyclic quantum cluster algebras.

In the present paper, we work on an arbitrary quantum cluster algebra with principal coefficients. We mainly establish the fundamental relations, upon which we further obtain some higher order fundamental relations.
Immediately and directly, we obtain an algebra homomorphism from the corresponding (untwisted) quantum group to quantum cluster algebra.

It is worth remarking that our approach in this paper is almost exclusively  elementary algebraic computation.

\section{Preliminaries}
At first we collect some necessary notations and concepts of quantum cluster algebras \cite{AA} and give some useful $q$-identities,
respectively.

\subsection{Quantum cluster algebras}
Given two positive integers $s$ and $t$ with $s<t$, for the sake of convenience we use the notion  $[s,t]$ to represent the set $\{s,s+1,\ldots,t-1,t\}$.
 A skew-symmetrizable matrix $B$ is  a square integer matrix such that there exists
some integer diagonal matrix $D$ with positive diagonal entries to make $DB$ being skew-symmetric and then $D$ is called the skew-symmetrizer of $B$. For two positive integers
$m$ and $n$ with $m\ge n$,  let $\widetilde{B}=(b_{ij})$ be an~$m\times n$ integer matrix with its principal part (the upper $n\times n$ submatrix) being a skew-symmetrizable matrix $B$. The coefficient matrix
in $\widetilde{B}$ is the submatrix composed of the last $m-n$ rows.

An $m\times m$  skew-symmetric integer matrix $\Lambda $ can be chosen to make $\widetilde{B}^{T}\Lambda=\begin{bmatrix} D & {\bf{0}} \end{bmatrix}$ for some integer diagonal
 matrix  $D$ with positive diagonal entries. The pair $(\widetilde{B},\Lambda)$ is then called a compatible pair. For later use for a given compatible pair $(\widetilde{B},\Lambda)$,
 we denote $D=\operatorname{diag}(d_1, d_2, \dots, d_n)$ and
$\widetilde{B}=\begin{bmatrix}  {\bf{b}}_1 & {\bf{b}}_2 &\dots &{\bf{b}}_n \end{bmatrix}$ with
${\bf{b}}_j \in \mathbb{Z}^m $ for $ j \in[1, n]$.

Note that one can identify the skew-symmetric $m\times m$ matrix $\Lambda=(\lambda_{ij})$ with the bilinear form still denoted by
 $\Lambda:\mathbb{Z}^m\times\mathbb{Z}^m\rightarrow \mathbb{Z}$ which satisfies the compatibility
condition with  $\widetilde{B}$, i.e.,
\begin{equation}\label{(dj)}
	\Lambda({\bf{b}}_j, {\bf{e}}_i)=\delta_{ij}d_j \quad (i \in [1,m], j\in [1,n])
		\end{equation}
where $\e_i$ is the $i$-th unit vector in $\mathbb{Z}^m$ for any $ i\in [1,m]$. This identification is achieved through $\lambda_{ij}:=\Lambda(\e_i,\e_j).$

Let $q$ be a formal variable and denote the ring of integer Laurent polynomials
in the variable $q^{\frac{1}{2}}$ by $\mathbb{Z}[q^{ \pm {\frac{1}{2}}}]$. The based quantum torus $\mathcal{T}=\mathcal{T}(\Lambda)$ is defined as the $\mathbb{Z}[q^{ \pm {\frac{1}{2}}}]$-algebra with a
distinguished  $\mathbb{Z}[q^{ \pm {\frac{1}{2}}}]$-basis $\{ {X^{\e}}: {\e}\in\mathbb{Z}^m \}$ and the multiplication
\begin{equation}\label{(ef)}
	X^{\e} X^{\bf{f}}=q^{\frac{\Lambda({\bf{e}}, {\bf{f}})}{2}} X^{{\bf{e}} +{\bf{f}}} , \quad (\e, {\bf{f}} \in \mathbb{Z}^m).
	 \end{equation}

For any ${i\in [1,m]}$, denote $x_i:=X^{\e_i}$, then as a  $\mathbb{Z}[q^{ \pm {\frac{1}{2}}}]$-algebra,  $\mathcal{T}$
is generated by the elements $x_i$ and their inverses subject to the quasi-commutative relations
\begin{equation}\label{(ij)}
	{x_i}{x_j} = {q^{{\lambda _{ij}}}}{x_j}{x_i}
\end{equation}
for $i,j \in [1,m]$.
Given ${\bf{a}}=(a_1, a_2, \dots, a_m)\in \mathbb{Z}^m$,  we have $$X^{\bf{a}}=q^{\frac{1}{2} \sum_{l<k} a_k a_l \lambda_{kl}} x_1^{a_1}x_2^{a_2}\cdots x_m^{a_m}.$$

Define the $\mathbb{Z}$-linear bar-involution $-: \mathcal{T} \longrightarrow \mathcal{T}$ by
\[\overline {{q^{\frac{l}{2}}}{X^\mathbf{c}}}  = {q^{ - \frac{l}{2}}}{X^\mathbf{c}},\quad \text{for all } l \in \mathbb{Z} \text{ and }\mathbf{c} \in \mathbb{Z}^m.\]

Note that  $\overline {fg}  = \overline g \overline f $ for any $f,g \in \mathcal{T}$. 

\begin{definition}\label{def of generalized seed}
With the above notations,  the triple $\big(\widetilde{\mathbf{x}},\Lambda,\widetilde{B}\big)$ is called a quantum seed where
the set $\widetilde{\mathbf{x}}=\{x_{1},x_2,\ldots, x_{m}\}$ is the extended cluster, $\mathbf{x}=\{x_{1},x_2,\ldots,x_{n}\}$ is the cluster,
elements $x_i$ for $i\in [1,n]$ are called quantum cluster variables and elements $x_{i}$ for $i\in [m+1, n]$ are called frozen variables.
\end{definition}

Define the function
\begin{gather*}
[x]_+:=
 \begin{cases}
 x,  &\text{if}\quad x\geq 0;
 \\
0,  & \text{if}\quad x< 0.
 \end{cases}
\end{gather*}

\begin{definition}
The mutation of a quantum seed $\big(\widetilde{\mathbf{x}},\Lambda,\widetilde{B}\big)$ in the direction $k \in [1,n]$ is the quantum seed \smash{$\mu_k\big(\widetilde{\mathbf{x}},\Lambda,\widetilde{B}\big):=\big(\widetilde{\mathbf{x}}',
\Lambda',\widetilde{B}'\big)$}, where
\begin{enumerate}\itemsep=0pt
\item the set $\widetilde{\mathbf{x}}':=(\widetilde{\mathbf{x}}- \{x_k\})\cup\{x'_{k}\}$ with
\begin{equation}\label{(er)}
x'_k=X^{-\e_k+[\b_k]_+}+X^{-\e_k+[-\b_k]_+};
\end{equation}
\item
the matrix $\widetilde{B}':=\mu_k\big(\widetilde{B}\big)$ is defined by
\begin{gather}\label{(B)}
b'_{ij}=
 \begin{cases}
 -b_{ij}, &\text{ if }  i=k \text{ or } j=k;
 \\
b_{ij}+\displaystyle\frac{|b_{ik}|b_{kj}+b_{ik}|b_{kj}|}{2}, &\text{otherwise};
 \end{cases}
\end{gather}
\item
the skew-symmetric matrix $\Lambda':=\mu_k\big(\Lambda\big)$ is defined by
\begin{gather}\label{(A)}
	\lambda'_{ij}=
	\begin{cases}
		\lambda_{ij}, &\text{ if } i,j\ne k;
		\\
		- {\lambda _{ij}} + \sum\limits_{t = 1}^m {{{[{b_{ti}}]}_ + }{\lambda _{tj}}} , &\text{ if } i= k, j\ne k.
	\end{cases}
\end{gather}
\end{enumerate}
\end{definition}

Note that $\mu_k$ is an involution.  Two quantum seeds are called mutation-equivalent if one can be obtained from
another through a sequence of mutations. Denote the skew-field of fractions of $\mathcal{T}$  by $\mathcal{F}$ and
$$\mathop{\mathbb{ZP}}:=\mathbb{Z}[q^{ \pm {\frac{1}{2}}}][x^{\pm}_{n+1},\ldots,x^{\pm}_m].$$

\sloppy\begin{definition}\label{def of gca}
Given an initial quantum seed $\big(\widetilde{\mathbf{x}}, \Lambda, \widetilde{B}\big)$, the quantum cluster algebra $\smash{\mathcal{A}\big(\widetilde{\mathbf{x}}, \Lambda, \widetilde{B}\big)}$ is the $\mathop{\mathbb{ZP}}$-subalgebra of~$\mathcal{F}$ generated by all quantum cluster variables from all quantum seeds mutation-equivalent to $\smash{\big(\widetilde{\mathbf{x}},\Lambda,\widetilde{B}\big)}$.
\end{definition}
Note that  by letting $q\rightarrow 1$ one can recover the classical cluster algebra.

Denoted by   $\Gamma\big(\widetilde{\mathbf{x}}, \Lambda,\widetilde{B}\big)$ the directed graph associated to a quantum seed~$\big(\widetilde{\mathbf{x}}, \Lambda,\widetilde{B}\big)$ with vertices $[1,n]$ and the directed edge from $i$ to $j$ if $b_{ij}>0$.

\begin{definition}
	A  quantum cluster algebra $\smash{\mathcal{A}\big(\widetilde{\mathbf{x}}, \Lambda, \widetilde{B}\big)}$ is called principal coefficients quantum cluster algebra if the coefficient matrix in the initial seed $\big(\widetilde{\mathbf{x}},\Lambda,\widetilde{B}\big)$ is an identity matrix.
\end{definition}

\subsection{Some  $q$-identities} Let $\mathbb{Z}[q]$ be a polynomial ring in an indeterminate $q$. For each $n\ge 0$, define
\[{[n]_{q}} = \frac{{{q^n} - 1}}{{q - 1}}; \]
\begin{gather}\label{(A2)}
	[n]_{{q}}^!=
	\begin{cases}
		{[1]_{{q}}}{[2]_{{q}}} \cdots {[n]_{{q}}}, &\text{ if } n\geq 1;
		\\
		1 , &\text{ if } n= 0.
	\end{cases}
\end{gather}

\begin{gather}\label{(A1)}
		{\left[ {\begin{array}{*{20}{c}}
			n\\
			r
	\end{array}}
	\right]_{{q}}}=
	\begin{cases}
		\frac{{[n]_{{q}}^!}}{{[r]_{{q}}^![n - r]_{{q}}^!}}, &\text{ if } 0\le r\le n;
		\\
		0 , &\text{ if } r< 0 \text{ or } r> n.
	\end{cases}
\end{gather}

A direct calculation shows that
\begin{eqnarray}\label{(n+1)}
	{\left[ {\begin{array}{*{20}{c}}
			{n + 1}\\
			r
	\end{array}}
	\right]_{{q}}} = {\left[ {\begin{array}{*{20}{c}}
			n\\
			r
	\end{array}}
	\right]_{{q}}} + {{q}^{n + 1 - r}}{\left[ {\begin{array}{*{20}{c}}
			n\\
			{r - 1}
	\end{array}}
	\right]_{{q}}} = {{q}^r}{\left[ {\begin{array}{*{20}{c}}
			n\\
			r
	\end{array}}
	\right]_{{q}}} + {\left[ {\begin{array}{*{20}{c}}
			n\\
			{r - 1}
	\end{array}}
	\right]_{{q}}},\ \ \   r \ge 0, n\ge 0;
\end{eqnarray}
\begin{eqnarray}\label{q+-b}
	{\left[ n \right]_{{q}}} = {q^{n-1 }}{\left[ n \right]_{{q^{ - 1}}}},\ \ \   n\geq 0;
\end{eqnarray}	
\begin{eqnarray}\label{q+-b1}	
	{\left[ {\begin{array}{*{20}{c}}
				n\\
				r
		\end{array}} \right]_{{q}}} =  {q}^{r(n - r)}{\left[ {\begin{array}{*{20}{c}}
				n\\
				r
		\end{array}} \right]_{{q^{ - 1}}}},\ \ \  0 \le r \le n;
\end{eqnarray}
\begin{eqnarray}\label{(q^btq^b)}
{\left[ n \right]_{{q^{r}}}} = {\left[ n \right]_{{q}}}\frac{{1 + {q^{n}} + {q^{2n}} +  \cdots  + {q^{(r - 1)n}}}}{{1 + {q} + {q^{2}} +  \cdots  + {q^{r - 1}}}},\ \ \ r\geq 1 ,n\ge 1.
\end{eqnarray}
	
In the following, we list some well known identities.
\begin{lemma}\cite{I}
For any positive integer $ d $, we have

\begin{equation}\label{(q=0)}
\sum_{r=0}^{d} (-1)^r q^{\frac{r(r-1)}{2}} \left[ {\begin{array}{*{20}{c}}
			d\\
			r
	\end{array}}
	\right]_q = 0.\end{equation}
\end{lemma}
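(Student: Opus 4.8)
The plan is to establish \eqref{(q=0)} directly from the Pascal-type recurrence \eqref{(n+1)}, with no induction needed. Write
\[
S_d \;=\; \sum_{r=0}^{d} (-1)^r q^{\frac{r(r-1)}{2}} \left[ {\begin{array}{*{20}{c}} d\\ r \end{array}} \right]_q
\]
for the left-hand side. For $d\ge 1$ I would substitute into $S_d$ the second equality of \eqref{(n+1)}, namely $\left[ {\begin{array}{*{20}{c}} d\\ r \end{array}} \right]_q = q^{r}\left[ {\begin{array}{*{20}{c}} d-1\\ r \end{array}} \right]_q + \left[ {\begin{array}{*{20}{c}} d-1\\ r-1 \end{array}} \right]_q$ (valid for every integer $r$), thereby splitting $S_d$ as a sum of two sums.

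In the first of these the powers of $q$ combine via $\frac{r(r-1)}{2}+r=\frac{r(r+1)}{2}$, and since $\left[ {\begin{array}{*{20}{c}} d-1\\ r \end{array}} \right]_q=0$ for $r\ge d$ by \eqref{(A1)}, it equals $\sum_{r=0}^{d-1}(-1)^r q^{\frac{r(r+1)}{2}}\left[ {\begin{array}{*{20}{c}} d-1\\ r \end{array}} \right]_q$. In the second sum I would reindex $r\mapsto r+1$: the contribution at $r=-1$ vanishes because $\left[ {\begin{array}{*{20}{c}} d-1\\ -1 \end{array}} \right]_q=0$, again by \eqref{(A1)}, and since $\frac{(r+1)r}{2}=\frac{r(r+1)}{2}$ the second sum becomes $-\sum_{r=0}^{d-1}(-1)^r q^{\frac{r(r+1)}{2}}\left[ {\begin{array}{*{20}{c}} d-1\\ r \end{array}} \right]_q$. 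The two sums are negatives of one another, so $S_d=0$, which is exactly \eqref{(q=0)}.

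I expect no genuine obstacle here: once the recurrence is applied the cancellation is immediate, and the only steps needing a little attention are the exponent bookkeeping $\frac{r(r-1)}{2}+r=\frac{r(r+1)}{2}$ together with the vanishing of the out-of-range $q$-binomial coefficients at the summation endpoints. As an alternative one may deduce \eqref{(q=0)} from the $q$-binomial theorem $\prod_{i=0}^{d-1}\big(1+q^{i}x\big)=\sum_{r=0}^{d}q^{\frac{r(r-1)}{2}}\left[ {\begin{array}{*{20}{c}} d\\ r \end{array}} \right]_q x^{r}$ by specializing $x=-1$, since the factor indexed by $i=0$ is then $1-1=0$ and the whole product vanishes; this is the route underlying the cited reference \cite{I}.
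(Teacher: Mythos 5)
Your argument is correct. Note that the paper itself gives no proof of this identity: it is quoted from Macdonald's book \cite{I} (and indeed the generalization, Corollary \ref{-cr}, is likewise quoted from \cite{CD}), so there is no in-paper argument to compare against. Your telescoping derivation from the second form of the Pascal recurrence \eqref{(n+1)} is a complete and self-contained substitute: the exponent computation $\frac{r(r-1)}{2}+r=\frac{r(r+1)}{2}$ matches the exponent $\frac{(r+1)r}{2}$ produced by the shift $r\mapsto r+1$ in the second sum, the boundary terms at $r=d$ and $r=-1$ vanish by the convention \eqref{(A1)}, and the recurrence is applied only in the range $0\le r\le d$, $d-1\ge 0$ where \eqref{(n+1)} is stated, so the two halves cancel exactly and $S_d=0$. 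Your alternative route, specializing $x=-1$ in the $q$-binomial product formula so that the factor indexed by $i=0$ (equivalently $r=1$ in the normalization of Lemma \ref{q-erxiangshi}, after replacing $x$ by $x/q$) kills the whole product, is the standard proof in the cited reference and is equally valid; it has the advantage of explaining \emph{why} the alternating sum vanishes, while the recurrence argument uses only \eqref{(n+1)} and \eqref{(A1)} and nothing else from the paper. Either version could be inserted as a proof without changing anything downstream.
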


\begin{corollary}\cite{CD}\label{-cr}
	For any integers $ d \geq 1 $ and $ 0 \leq c \leq d-1 $, we have
	
	\begin{equation}\label{(-cr)}
		\sum\limits_{r = 0}^{d} {{{( - 1)}^r}{{{q}}^{\frac{{r(r - 1)}}{2} - cr}}{{\left[ {\begin{array}{*{20}{c}}
							d\\
							r
					\end{array}} \right]}_{{q}}}}  = 0.
	\end{equation}
	
\end{corollary}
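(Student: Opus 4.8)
The plan is to reduce \eqref{(-cr)} to the already-established identity \eqref{(q=0)} by an induction on $c$, using nothing beyond the Pascal-type recursion \eqref{(n+1)}. Write $S(d,c)$ for the left-hand side of \eqref{(-cr)}. The base case $c=0$ is literally \eqref{(q=0)} (for the given $d\ge 1$), so there is nothing to prove there.

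For the inductive step I would substitute into $S(d,c)$ the second form of \eqref{(n+1)}, namely $\left[\begin{array}{c}d\\r\end{array}\right]_q = q^r\left[\begin{array}{c}d-1\\r\end{array}\right]_q + \left[\begin{array}{c}d-1\\r-1\end{array}\right]_q$, and split the resulting sum into two pieces. In the first piece the extra factor $q^r$ merges into the exponent, turning $q^{\frac{r(r-1)}{2}-cr}$ into $q^{\frac{r(r-1)}{2}-(c-1)r}$, so that piece is exactly $S(d-1,c-1)$. In the second piece the $r=0$ term vanishes because $\left[\begin{array}{c}d-1\\-1\end{array}\right]_q=0$; shifting the index $r\mapsto r+1$ and using $\binom{r+1}{2}=\binom{r}{2}+r$ one checks that the exponent becomes $\frac{r(r-1)}{2}-(c-1)r-c$, so this piece equals $-q^{-c}\,S(d-1,c-1)$. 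Combining the two pieces yields the recursion $S(d,c)=(1-q^{-c})\,S(d-1,c-1)$.

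Finally, if $1\le c\le d-1$ then $0\le c-1\le (d-1)-1$ and $d-1\ge 1$, so either $c-1=0$ and $S(d-1,c-1)=0$ by \eqref{(q=0)}, or $c-1\ge 1$ and $S(d-1,c-1)=0$ by the induction hypothesis; in both cases $S(d,c)=0$. (Equivalently, iterating the recursion gives $S(d,c)=\prod_{j=1}^{c}(1-q^{-j})\cdot S(d-c,0)$, and $S(d-c,0)=0$ by \eqref{(q=0)} since $d-c\ge 1$.) I do not expect a genuine obstacle here: the argument is entirely elementary, and the only thing requiring care is the bookkeeping of the $q$-exponents and the index shift in the second sum. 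Conceptually, \eqref{(-cr)} is just the specialization $t=q^{-c}$ of the $q$-binomial theorem $\sum_{r}(-1)^r q^{\binom{r}{2}}\left[\begin{array}{c}d\\r\end{array}\right]_q t^r=\prod_{i=0}^{d-1}(1-q^i t)$, whose right-hand side acquires the vanishing factor $1-q^{\,c-c}$ precisely when $0\le c\le d-1$; the induction above is exactly the proof of that specialization without invoking the full theorem.
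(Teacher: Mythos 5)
Your argument is correct. The paper itself gives no proof of this corollary --- it is simply cited from [CD] --- so there is nothing internal to compare against; your derivation supplies a proof the paper omits. The computation checks out: substituting the second form of \eqref{(n+1)} with $n=d-1$ into $S(d,c)$, the first piece is $S(d-1,c-1)$ (the $r=d$ term dies because $\left[ {\begin{array}{*{20}{c}} {d-1}\\ d \end{array}} \right]_q=0$), the index shift in the second piece produces exactly $-q^{-c}S(d-1,c-1)$, and the recursion $S(d,c)=(1-q^{-c})S(d-1,c-1)$ together with the range hypothesis $0\le c\le d-1$ drives everything down to \eqref{(q=0)}. One remark worth making: your closing observation can be promoted to the entire proof, using only tools already in the paper. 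Lemma \ref{q-erxiangshi} states $\prod_{r=1}^{n}(1+q^r x)=\sum_{k=0}^{n}\left[ {\begin{array}{*{20}{c}} n\\ k \end{array}} \right]_q q^{\frac{k(k+1)}{2}}x^k$; setting $n=d$ and $x=-q^{-c-1}$ turns the right-hand side into precisely $\sum_{r=0}^{d}(-1)^r q^{\frac{r(r-1)}{2}-cr}\left[ {\begin{array}{*{20}{c}} d\\ r \end{array}} \right]_q$, while the left-hand side contains the factor $1-q^{(c+1)-(c+1)}=0$ whenever $1\le c+1\le d$, i.e.\ $0\le c\le d-1$. That one-line specialization and your induction are logically equivalent, but the former avoids all exponent bookkeeping; either is a complete and valid proof.
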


\begin{lemma}\label{q-erxiangshi}
	For  any   integer $n\geq 1$, we have \[\prod\limits_{r = 1}^n {\left( {1 + {q^r}x} \right)}  = \sum\limits_{k = 0}^n {{{\left[ {\begin{array}{*{20}{c}}
						n\\
						k
				\end{array}} \right]}_q}{q^{\frac{{k(k + 1)}}{2}}}x^k} .\]

\end{lemma}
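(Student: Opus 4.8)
The plan is to prove the identity $\prod_{r=1}^{n}(1+q^r x) = \sum_{k=0}^{n} \left[{n\atop k}\right]_q q^{\frac{k(k+1)}{2}} x^k$ by induction on $n$. For the base case $n=1$, the left side is $1+qx$, and the right side is $\left[{1\atop 0}\right]_q q^0 x^0 + \left[{1\atop 1}\right]_q q^1 x^1 = 1 + qx$, so the two agree.

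For the inductive step, suppose the identity holds for some $n\ge 1$. Then
\begin{align*}
\prod_{r=1}^{n+1}(1+q^r x)
&= (1+q^{n+1}x)\prod_{r=1}^{n}(1+q^r x)
= (1+q^{n+1}x)\sum_{k=0}^{n}\left[{n\atop k}\right]_q q^{\frac{k(k+1)}{2}} x^k\\
&= \sum_{k=0}^{n}\left[{n\atop k}\right]_q q^{\frac{k(k+1)}{2}} x^k
 + \sum_{k=0}^{n}\left[{n\atop k}\right]_q q^{\frac{k(k+1)}{2}+n+1} x^{k+1}.
\end{align*}
Re-indexing the second sum by $k\mapsto k-1$ and combining, the coefficient of $x^k$ (for $0\le k\le n+1$) becomes
\[
\left[{n\atop k}\right]_q q^{\frac{k(k+1)}{2}}
 + \left[{n\atop k-1}\right]_q q^{\frac{(k-1)k}{2}+n+1}
= q^{\frac{k(k+1)}{2}}\left(\left[{n\atop k}\right]_q + q^{n+1-k}\left[{n\atop k-1}\right]_q\right),
\]
where I used $\frac{(k-1)k}{2} + n+1 = \frac{k(k+1)}{2} + (n+1-k)$ to factor out $q^{\frac{k(k+1)}{2}}$. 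The first Pascal-type recursion in \eqref{(n+1)} gives $\left[{n\atop k}\right]_q + q^{n+1-k}\left[{n\atop k-1}\right]_q = \left[{n+1\atop k}\right]_q$, so the coefficient of $x^k$ is exactly $\left[{n+1\atop k}\right]_q q^{\frac{k(k+1)}{2}}$, which is the claimed formula at level $n+1$. (The boundary terms $k=0$ and $k=n+1$ are handled automatically by the convention that $\left[{n\atop r}\right]_q = 0$ when $r<0$ or $r>n$.)

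The only subtlety — and it is a minor one — is the bookkeeping in the exponent of $q$ when shifting the index of the second sum: one must verify that $\binom{k-1}{1}$-type shift in $\frac{k(k+1)}{2}$ produces precisely the factor $q^{n+1-k}$ needed to match the left-hand identity in \eqref{(n+1)}. No deeper obstacle arises; this is a routine $q$-binomial induction, and the Pascal recursion \eqref{(n+1)} stated earlier in the excerpt does all the real work.
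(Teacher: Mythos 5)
Your induction is correct: the base case checks out, the exponent bookkeeping $\frac{(k-1)k}{2}+n+1=\frac{k(k+1)}{2}+(n+1-k)$ is right, and the first equality in \eqref{(n+1)} closes the inductive step, with the boundary terms $k=0$ and $k=n+1$ covered by the convention \eqref{(A1)}. The paper states this lemma without proof, listing it among well-known $q$-identities, and your argument is exactly the standard one that would be supplied.
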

\begin{corollary}\label{1q-erxiangshi}
	For  any   integer $n\geq 1$ and $xy=yx$, we have \[\prod\limits_{r = 1}^n {\left( {y + {q^r}x} \right)}  = \sum\limits_{k = 0}^n {{{\left[ {\begin{array}{*{20}{c}}
						n\\
						k
				\end{array}} \right]}_q}{q^{\frac{{k(k + 1)}}{2}}}{y^{n - k}}{x^k}}. \]

\end{corollary}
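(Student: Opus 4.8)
The plan is to reduce the statement to Lemma~\ref{q-erxiangshi}. Since the claimed equality only involves the (commutative, by the hypothesis $xy=yx$) subalgebra generated by $x$ and $y$, which is a quotient of the polynomial ring $\ZZ[q^{\pm 1/2}][x,y]$, it suffices to prove it for two commuting indeterminates $x,y$. Both sides are then homogeneous polynomials of degree $n$ in the pair $(x,y)$: each factor $y+q^rx$ has degree $1$, and every monomial $y^{n-k}x^k$ on the right has degree $n$. A homogeneous polynomial identity of degree $n$ in $x,y$ is equivalent to its specialization at $y=1$, and specializing $y=1$ turns the asserted identity into exactly the statement of Lemma~\ref{q-erxiangshi}. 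Hence the corollary follows.

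Equivalently, one can make this substitution explicit. Pass to the localization in which $y$ is invertible, apply Lemma~\ref{q-erxiangshi} with $x$ replaced by $y^{-1}x$ — using $(y^{-1}x)^k=y^{-k}x^k$, which holds because $x$ and $y$ commute — and then multiply the resulting identity on the left by $y^n$; since $y$ commutes with $x$, the left-hand side becomes $\prod_{r=1}^{n} y\,(1+q^ry^{-1}x)=\prod_{r=1}^{n}(y+q^rx)$, while the $k$-th term on the right becomes $\left[{\begin{smallmatrix}n\\k\end{smallmatrix}}\right]_q q^{\frac{k(k+1)}{2}}y^{n-k}x^k$, as desired. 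A third option, which avoids invertibility altogether, is a direct induction on $n$: from $\prod_{r=1}^{n+1}(y+q^rx)=\bigl(\prod_{r=1}^{n}(y+q^rx)\bigr)(y+q^{n+1}x)$ one substitutes the inductive hypothesis, expands using $xy=yx$, reindexes the term carrying $x^{k+1}$, and finds, using the first identity in \eqref{(n+1)}, that the coefficient of $y^{n+1-k}x^k$ is $\left[{\begin{smallmatrix}n+1\\k\end{smallmatrix}}\right]_q q^{\frac{k(k+1)}{2}}$.

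There is no genuine obstacle here; the single point demanding a little care is the bookkeeping of the exponent of $q$: in the inductive argument one must check that the shift in $q^{k(k+1)/2}$ produced by reindexing matches exactly the factor $q^{\,n+1-k}$ in \eqref{(n+1)}, and in the substitution argument one must observe that the localization at $y$ is harmless since the underlying relation is a polynomial identity. I would present the homogeneity reduction as the main proof, as it is the shortest and makes the role of Lemma~\ref{q-erxiangshi} transparent.
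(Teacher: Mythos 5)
Your argument is correct and matches the paper's intent exactly: the paper states this corollary with no proof, treating it as the immediate homogenization of Lemma \ref{q-erxiangshi}, and your reduction (restrict to the commutative subalgebra generated by $x,y$, note both sides are homogeneous of degree $n$, and specialize $y=1$ to recover the lemma) is precisely the justification that is being left implicit. The alternative substitution and induction arguments you sketch are also sound, but the homogeneity reduction suffices.
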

\begin{lemma}\cite{KP}\label{n-dk}
	For any integers $k\geq 0$ and  $0 \le d\le n$, we have
	\begin{equation}\label{(n-dk)}	
{\left[ {\begin{array}{*{20}{c}}
				n\\
				k
		\end{array}} \right]_q}  = \sum\limits_{r = 0}^k {{q^{(d - r)(k - r)}}{{\left[ {\begin{array}{*{20}{c}}
						d\\
						r
				\end{array}} \right]}_q}{{\left[ {\begin{array}{*{20}{c}}
						{n - d}\\
						{k - r}
				\end{array}} \right]}_q}} .
	\end{equation}
\end{lemma}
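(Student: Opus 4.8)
The plan is to obtain the identity by expanding one and the same polynomial in the commuting variables $x,y$ in two different ways, with Corollary~\ref{1q-erxiangshi} as the only real ingredient. The degenerate cases where one of the factors below is an empty product (equal to $1$) are immediate, so I will assume $1\le d\le n-1$ and work over $\mathbb{Z}[q][x,y]$.

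First, applying Corollary~\ref{1q-erxiangshi} to the whole product gives
\[
\prod_{r=1}^{n}\bigl(y+q^{r}x\bigr)=\sum_{k=0}^{n}\left[\begin{array}{c}n\\k\end{array}\right]_{q}q^{\frac{k(k+1)}{2}}\,y^{n-k}x^{k}.
\]
On the other hand I would split the product as $\prod_{r=1}^{d}\bigl(y+q^{r}x\bigr)\cdot\prod_{r=d+1}^{n}\bigl(y+q^{r}x\bigr)$ and reindex the tail by $r\mapsto r+d$, turning it into $\prod_{s=1}^{n-d}\bigl(y+q^{s}(q^{d}x)\bigr)$. Then Corollary~\ref{1q-erxiangshi} applied to each factor (to the second one with $q^{d}x$ in place of $x$, which is legitimate since $q^{d}x$ still commutes with $y$) yields
\[
\prod_{r=1}^{n}\bigl(y+q^{r}x\bigr)=\Bigl(\sum_{i=0}^{d}\left[\begin{array}{c}d\\i\end{array}\right]_{q}q^{\frac{i(i+1)}{2}}y^{d-i}x^{i}\Bigr)\Bigl(\sum_{j=0}^{n-d}\left[\begin{array}{c}n-d\\j\end{array}\right]_{q}q^{\frac{j(j+1)}{2}+dj}y^{n-d-j}x^{j}\Bigr).
\]

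Next I would read off the coefficient of $y^{n-k}x^{k}$ from both expansions. From the first it is $\left[\begin{array}{c}n\\k\end{array}\right]_{q}q^{k(k+1)/2}$; from the second, collecting all terms with $i+j=k$, writing $i=r$ and $j=k-r$, and letting $r$ run over $[0,k]$ (the out-of-range Gaussian binomials vanishing by \eqref{(A1)}), it equals
\[
\sum_{r=0}^{k}\left[\begin{array}{c}d\\r\end{array}\right]_{q}\left[\begin{array}{c}n-d\\k-r\end{array}\right]_{q}q^{\frac{r(r+1)}{2}+\frac{(k-r)(k-r+1)}{2}+d(k-r)}.
\]
Equating these and cancelling the common factor $q^{k(k+1)/2}$ gives exactly \eqref{(n-dk)}, provided the exponents match, i.e.
\[
\frac{r(r+1)}{2}+\frac{(k-r)(k-r+1)}{2}+d(k-r)-\frac{k(k+1)}{2}=(d-r)(k-r),
\]
which is an elementary check: the three half-integer terms collapse to $-r(k-r)$, leaving $(d-r)(k-r)$.

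No step is really an obstacle; the computation is routine. The one place deserving attention is the reindexing of the tail product, since the factor $q^{dj}$ it produces is precisely the source of the asymmetric weight $q^{(d-r)(k-r)}$ appearing in the statement. A purely inductive alternative (induction on $d$, using the Pascal-type recursions \eqref{(n+1)} simultaneously for $\left[\begin{array}{c}d+1\\r\end{array}\right]_{q}$ and for $\left[\begin{array}{c}n-d\\k-r\end{array}\right]_{q}$) is also available, but the generating-function argument above is shorter and self-contained given what has already been proved.
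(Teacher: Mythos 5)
Your argument is correct. The coefficient extraction works exactly as you claim: the exponent bookkeeping $\tfrac{r(r+1)}{2}+\tfrac{(k-r)(k-r+1)}{2}-\tfrac{k(k+1)}{2}=-r(k-r)$ is right, so after cancelling $q^{k(k+1)/2}$ the weight is $q^{(d-r)(k-r)}$ as required, and the reindexed tail product $\prod_{s=1}^{n-d}\bigl(y+q^{s}(q^{d}x)\bigr)$ is a legitimate substitution into Corollary~\ref{1q-erxiangshi} since it is a polynomial identity in commuting variables. Note, however, that the paper does not prove this lemma at all: it is quoted from \cite{KP} (the $q$-Vandermonde identity), so there is no internal proof to compare against. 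What your argument adds is a self-contained derivation from Lemma~\ref{q-erxiangshi}/Corollary~\ref{1q-erxiangshi}, which the paper already establishes, making the lemma independent of the external reference. The only cases your generating-function identity does not literally cover are the degenerate ones you flag ($d=0$, $d=n$, and $k>n$, where both sides reduce to a single term or vanish by the convention \eqref{(A1)}), and these are indeed immediate.
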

\begin{lemma}\label{lemma3}
We have

\begin{enumerate}\itemsep=0pt
\item for any  integer  $1\le k\le n$,	
\begin{equation}\label{(lemma3)}
\sum\limits_{t = 0}^n {{q^{ - tk}}\sum\limits_{r = 0}^t {{{( - 1)}^r}{q^{\frac{{r(r - 1)}}{2}}}{{\left[ {\begin{array}{*{20}{c}}
						{n + 1}\\
						r
				\end{array}} \right]}_q}} }  = 0;
			\end{equation}
\item for any  integer  $0\le k\le v-1$, $v\le n$,
\begin{equation}\label{(lemma31)}
	\sum\limits_{t = 0}^n {{q^{ t(v-k)}}\sum\limits_{r = 0}^t {{{( - 1)}^r}{q^{\frac{{r(r - 1)}}{2}-nr}}{{\left[ {\begin{array}{*{20}{c}}
							{n + 1}\\
							r
					\end{array}} \right]}_q}} }  = 0.			
\end{equation}

\end{enumerate}
\end{lemma}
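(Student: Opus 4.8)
The plan is to treat both identities by the same three steps: interchange the order of the two summations, sum the resulting geometric progression in $t$, and reduce the leftover single sums to Corollary~\ref{-cr}. Since each identity asserts that a Laurent polynomial in $q$ vanishes, I would work over $\mathbb{Q}(q)$, where $1-q^{-k}$ and $1-q^{v-k}$ are invertible (note $v-k\ge 1$ under the hypothesis $0\le k\le v-1$); equivalently one multiplies through by these elements, which are non-zero-divisors in $\mathbb{Z}[q^{\pm 1/2}]$, and shows the numerator is zero.

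For (1), set $a_r=(-1)^r q^{\frac{r(r-1)}{2}}\left[\begin{array}{c}n+1\\r\end{array}\right]_{q}$ for $0\le r\le n+1$. Interchanging the sums over $\{(r,t): 0\le r\le t\le n\}$ and using $\sum_{t=r}^n q^{-tk}=\frac{q^{-rk}-q^{-(n+1)k}}{1-q^{-k}}$ rewrites the left-hand side of \eqref{(lemma3)} as
\[
\frac{1}{1-q^{-k}}\Bigl(\sum_{r=0}^n a_r q^{-rk}-q^{-(n+1)k}\sum_{r=0}^n a_r\Bigr).
\]
I would then recognise $\sum_{r=0}^{n+1}a_r q^{-rk}$ as the left-hand side of \eqref{(-cr)} with $d=n+1$ and $c=k$ — admissible because $1\le k\le n=d-1$ — and $\sum_{r=0}^{n+1}a_r$ as \eqref{(q=0)} with $d=n+1$. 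Both equal $0$, so $\sum_{r=0}^n a_r q^{-rk}=-a_{n+1}q^{-(n+1)k}$ and $\sum_{r=0}^n a_r=-a_{n+1}$, and substituting these makes the bracket collapse to $0$.

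For (2), set $b_r=(-1)^r q^{\frac{r(r-1)}{2}-nr}\left[\begin{array}{c}n+1\\r\end{array}\right]_{q}$. The same interchange, now with $\sum_{t=r}^n q^{t(v-k)}=\frac{q^{r(v-k)}-q^{(n+1)(v-k)}}{1-q^{v-k}}$, reduces \eqref{(lemma31)} to showing $\sum_{r=0}^n b_r q^{r(v-k)}=q^{(n+1)(v-k)}\sum_{r=0}^n b_r$. Since $b_r q^{r(v-k)}=(-1)^r q^{\frac{r(r-1)}{2}-(n-v+k)r}\left[\begin{array}{c}n+1\\r\end{array}\right]_{q}$, the sum $\sum_{r=0}^{n+1}b_r q^{r(v-k)}$ is \eqref{(-cr)} with $d=n+1$ and $c=n-v+k$, and the hypotheses $0\le k\le v-1$, $v\le n$ are exactly what guarantee $0\le c\le n-1$; likewise $\sum_{r=0}^{n+1}b_r$ is \eqref{(-cr)} with $c=n=d-1$. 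Both vanish, whence $\sum_{r=0}^n b_r q^{r(v-k)}=-b_{n+1}q^{(n+1)(v-k)}$ and $\sum_{r=0}^n b_r=-b_{n+1}$, and the desired equality is immediate.

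I do not expect a real obstacle: after the order of summation is swapped, everything is forced by Corollary~\ref{-cr}. The only points that need care are verifying, in each of the four applications, that the shift $c$ lies in the admissible range $[0,d-1]$ with $d=n+1$, and keeping the $q$-exponents of the two boundary terms $a_{n+1}q^{-(n+1)k}$ and $b_{n+1}q^{(n+1)(v-k)}$ straight.
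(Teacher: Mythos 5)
Your argument is correct, and it takes a genuinely different route from the paper after the common first step of interchanging the two summations. The paper keeps the inner geometric progression $\sum_{t=r}^{n}q^{-tk}$ packaged as the $q$-integer $q^{-rk}[n+1-r]_{q^{-k}}$, then invokes the base-change identities \eqref{q+-b} and \eqref{(q^btq^b)} to rewrite it as $\frac{[n+1-r]_q}{[k]_q}\sum_{s=0}^{k-1}q^{s(n+1-r)}$ (up to a power of $q$), absorbs $[n+1-r]_q$ into the binomial via $[n+1-r]_q\left[\begin{smallmatrix}n+1\\ r\end{smallmatrix}\right]_q=[n+1]_q\left[\begin{smallmatrix}n\\ r\end{smallmatrix}\right]_q$, and finally applies Corollary~\ref{-cr} at level $d=n$ once for each $s\in[0,k-1]$ (and analogously for part (2)). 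You instead write the geometric sum in closed form as $\frac{q^{-rk}-q^{-(n+1)k}}{1-q^{-k}}$ and apply \eqref{(q=0)} and Corollary~\ref{-cr} at level $d=n+1$ directly to the two resulting full-range sums, using the top term $a_{n+1}$ (resp.\ $b_{n+1}$) to evaluate the truncated sums; the range checks $1\le k\le n$, $0\le n-v+k\le n$, and $c=n\le d-1$ are exactly the hypotheses of the lemma, as you note. Your version buys a shorter computation that avoids the $q$-integer base-change identities entirely, at the minor cost of dividing by $1-q^{-k}$ (resp.\ $1-q^{v-k}$), which you correctly justify since these are nonzero in the integral domain $\mathbb{Z}[q^{\pm1/2}]$ under the stated hypotheses. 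Both proofs are sound.
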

\begin{proof}
\begin{enumerate}\itemsep=0pt
\item We calculate
	\begin{eqnarray*}
&&\sum\limits_{t = 0}^n {{q^{ - tk}}\sum\limits_{r = 0}^t {{{( - 1)}^r}{q^{\frac{{r(r - 1)}}{2}}}{{\left[ {\begin{array}{*{20}{c}}
							{n + 1}\\
							r
					\end{array}} \right]}_q}} }  = \sum\limits_{t = 0}^n {{q^{ - tk}}\sum\limits_{r = 0}^t {{D_r}} } \\
&	=& {D_0} + {q^{ - k}}\left( {{D_0} + {D_1}} \right) + {q^{ - 2k}}\left( {{D_0} + {D_1} + {D_2}} \right) +  \cdots  + {q^{ - nk}}\sum\limits_{r = 0}^n {{D_r}} \\
&	=& {D_0}\left( {1 + {q^{ - k}} + {q^{ - 2k}} +  \cdots  + {q^{ - nk}}} \right) + {D_1}\left( {{q^{ - k}} + {q^{ - 2k}} +  \cdots  + {q^{ - nk}}} \right)\\
	&&+ {D_2}\left( {{q^{ - 2k}} +  \cdots  + {q^{ - nk}}} \right) +  \cdots  +  {D_n}{q^{ - nk}}\\
&	=& {D_0}{\left[ {n + 1} \right]_{{q^{ - k}}}} + {D_1}{q^{ - k}}{\left[ n \right]_{{q^{ - k}}}} + {D_2}{q^{ - 2k}}{\left[ {n - 1} \right]_{{q^{ - k}}}}	+  \cdots   + {D_n}{q^{ - nk}}{\left[ 1 \right]_{{q^{ - k}}}}\\
&	=& \sum\limits_{r = 0}^n { {{q^{ - rk}}{{\left[ {n + 1 - r} \right]}_{{q^{ - k}}}}{D_r}} } ,
\end{eqnarray*}
where ${D_r} = {( - 1)^r}{q^{\frac{{r(r - 1)}}{2}}}{\left[ {\begin{array}{*{20}{c}}
			{n + 1}\\
			r
	\end{array}} \right]_q}.$

According to  (\ref{q+-b}) and (\ref{(q^btq^b)}), we obtain
\begin{eqnarray*}
	&&	\sum\limits_{r = 0}^n { {{q^{ - rk}}{{\left[ {n + 1 - r} \right]}_{{q^{ - k}}}}{D_r}}} \\
	&	=& \sum\limits_{r = 0}^n {{{( - 1)}^r}{{\left[ {n + 1 - r} \right]}_{{q^{ - k}}}}{q^{\frac{{r(r - 1)}}{2} - rk}}{{\left[ {\begin{array}{*{20}{c}}
							{n + 1}\\
							r
					\end{array}} \right]}_q}}  \\
	&	\mathop  = \limits^{(\ref{q+-b})}& \sum\limits_{r = 0}^n {{{( - 1)}^r}{q^{k(r - n)}}{{\left[ {n + 1 - r} \right]}_{{q^k}}}{q^{\frac{{r(r - 1)}}{2} - rk}}{{\left[ {\begin{array}{*{20}{c}}
						{n + 1}\\
						r
				\end{array}} \right]}_q}}   \\
	&	\mathop  = \limits^{(\ref{(q^btq^b)})}&	\sum\limits_{r = 0}^n {\left( {{{( - 1)}^r}{q^{ - nk}}{{\left[ {n + 1 - r} \right]}_q}\frac{{\sum\limits_{s = 0}^{k - 1} {{q^{s(n + 1 - r)}}} }}{{{{\left[ k \right]}_q}}}{q^{\frac{{r(r - 1)}}{2}}}{{\left[ {\begin{array}{*{20}{c}}
							{n + 1}\\
							r
					\end{array}} \right]}_q}} \right)} \\
&	= &\sum\limits_{r = 0}^n {\left( {{{( - 1)}^r}{q^{ - nk}}{{\left[ {n + 1} \right]}_q}\frac{{\sum\limits_{s = 0}^{k - 1} {{q^{s(n + 1 - r)}}} }}{{{{\left[ k \right]}_q}}}{q^{\frac{{r(r - 1)}}{2}}}{{\left[ {\begin{array}{*{20}{c}}
							n\\
							r
					\end{array}} \right]}_q}} \right)} \\
					\end{eqnarray*}		
				\begin{eqnarray*}
&	=& \frac{{{q^{ - nk}}{{\left[ {n + 1} \right]}_q}}}{{{{\left[ k \right]}_q}}}\sum\limits_{r = 0}^n {{{( - 1)}^r}\left( {\sum\limits_{s = 0}^{k - 1} {{q^{s(n + 1 - r)}}} } \right){q^{\frac{{r(r - 1)}}{2}}}{{\left[ {\begin{array}{*{20}{c}}
						n\\
						r
				\end{array}} \right]}_q}}  .
\end{eqnarray*}	

For  any $s \in [0,k-1]$, according to Corollary \ref{-cr}, we have
\begin{eqnarray*}
\sum\limits_{r = 0}^n {{{( - 1)}^r}{q^{s(n + 1 - r)}}{q^{\frac{{r(r - 1)}}{2}}}{{\left[ {\begin{array}{*{20}{c}}
					n\\
					r
			\end{array}} \right]}_q}}  = {q^{s(n + 1)}}\sum\limits_{r = 0}^n {{{( - 1)}^r}{q^{\frac{{r(r - 1)}}{2} - sr}}{{\left[ {\begin{array}{*{20}{c}}
					n\\
					r
			\end{array}} \right]}_q}} \mathop  = \limits^{(\ref{(-cr)})} 0.
\end{eqnarray*}


\item Similarly, we have
\begin{eqnarray*}
&&	\sum\limits_{t = 0}^n {{q^{t(v - k)}}\sum\limits_{r = 0}^t {{{( - 1)}^r}{q^{\frac{{r(r - 1)}}{2} - nr}}{{\left[ {\begin{array}{*{20}{c}}
							{n + 1}\\
							r
					\end{array}} \right]}_q}} }  = \sum\limits_{t = 0}^n {{q^{t(v - k)}}\sum\limits_{r = 0}^t {{F_r}} } \\
&	=& {F_0} + {q^{v - k}}\left( {{F_0} + {F_1}} \right) + {q^{2(v - k)}}\left( {{F_0} + {F_1} + {F_2}} \right) +  \cdots  + {q^{n(v - k)}}\sum\limits_{r = 0}^n {{F_r}} \\
&	=& {F_0}\left( {1 + {q^{v - k}} + {q^{2(v - k)}} +  \cdots  + {q^{n(v - k)}}} \right) + {F_1}\left( {{q^{v - k}} + {q^{2(v - k)}} +  \cdots  + {q^{n(v - k)}}} \right){\rm{ }}\\
&&	+ {F_2}\left( {{q^{2(v - k)}} +  \cdots  + {q^{n(v - k)}}} \right) +  \cdots  +  {F_n}{q^{n(v - k)}}\\
&	=& {F_0}{\left[ {n + 1} \right]_{{q^{v - k}}}} + {F_1}{q^{v - k}}{\left[ n \right]_{{q^{v - k}}}} + {F_2}{q^{2(v - k)}}{\left[ {n - 1} \right]_{{q^{v - k}}}} +  \cdots  +  {F_n}{q^{n(v - k)}}{\left[ 1 \right]_{{q^{v - k}}}}\\
&	=&\sum\limits_{r = 0}^n { {{q^{r(v - k)}}{{\left[ {n + 1 - r} \right]}_{{q^{v - k}}}}{F_r}} } ,
\end{eqnarray*}
where ${F_r} = {( - 1)^r}{q^{\frac{{r(r - 1)}}{2}-nr}}{\left[ {\begin{array}{*{20}{c}}
			{n + 1}\\
			r
	\end{array}} \right]_q}.$	

According to    (\ref{(q^btq^b)}), we obtain
\begin{eqnarray*}
&&	\sum\limits_{r = 0}^n {{q^{r(v - k)}}{{\left[ {n + 1 - r} \right]}_{{q^{v - k}}}}{F_r}}      =   \sum\limits_{r = 0}^n {{{( - 1)}^r}{{\left[ {n + 1 - r} \right]}_{{q^{v - k}}}}{q^{\frac{{r(r - 1)}}{2} - r(n - v + k)}}{{\left[ {\begin{array}{*{20}{c}}
						{n + 1}\\
						r
				\end{array}} \right]}_q}} \\
&	\mathop  = \limits^{(\ref{(q^btq^b)})} & \sum\limits_{r = 0}^n {\left( {{{( - 1)}^r}{{\left[ {n + 1 - r} \right]}_q}\frac{{\sum\limits_{s = 0}^{v - k - 1} {{q^{s(n + 1 - r)}}} }}{{{{\left[ {v - k} \right]}_q}}}{q^{\frac{{r(r - 1)}}{2} - r(n - v + k)}}{{\left[ {\begin{array}{*{20}{c}}
							{n + 1}\\
							r
					\end{array}} \right]}_q}} \right)} \\
&	= &\sum\limits_{r = 0}^n {\left( {{{( - 1)}^r}{{\left[ {n + 1} \right]}_q}\frac{{\sum\limits_{s = 0}^{v - k - 1} {{q^{s(n + 1 - r)}}} }}{{{{\left[ {v - k} \right]}_q}}}{q^{\frac{{r(r - 1)}}{2} - r(n - v + k)}}{{\left[ {\begin{array}{*{20}{c}}
							n\\
							r
					\end{array}} \right]}_q}} \right)} \\
&	=& \frac{{{{\left[ {n + 1} \right]}_q}}}{{{{\left[ {v - k} \right]}_q}}}\sum\limits_{r = 0}^n {{{( - 1)}^r}\left( {\sum\limits_{s = 0}^{v - k - 1} {{q^{s(n + 1 - r)}}} } \right){q^{\frac{{r(r - 1)}}{2} - r(n - v + k)}}{{\left[ {\begin{array}{*{20}{c}}
						n\\
						r
				\end{array}} \right]}_q}} .
\end{eqnarray*}

For  any $s \in [0,v-k-1]$, according to Corollary  \ref{-cr}, we have
\begin{eqnarray*}
\sum\limits_{r = 0}^n {{{( - 1)}^r}{q^{s(n + 1 - r)}}{q^{\frac{{r(r - 1)}}{2} - r(n - v + k)}}{{\left[ {\begin{array}{*{20}{c}}
					n\\
					r
			\end{array}} \right]}_q}}  = {q^{s(n + 1)}}\sum\limits_{r = 0}^n {{{( - 1)}^r}{q^{\frac{{r(r - 1)}}{2} - r(n - v + k + s)}}{{\left[ {\begin{array}{*{20}{c}}
					n\\
					r
			\end{array}} \right]}_q}}  = 0.
\end{eqnarray*}

\end{enumerate}

This completes the proof.	
\end{proof}


\section{The fundamental relations }

In this section,  we consider the  quantum cluster algebra with principal coefficients $\mathcal{A}_{q}$ as follows:
the initial quantum seed is $\big(\widetilde{\mathbf{x}}, \Lambda, \widetilde{B}\big)$, where ${\widetilde{\mathbf{x}}}=\{x_1,\cdots,x_{2n}\}$,  $\Lambda  = \left( {\begin{array}{*{20}{c}}
		0&{ - D}\\
		D&{ - DB}
\end{array}} \right)$	and ${\widetilde{B}}=\left( {\begin{array}{*{20}{c}}
		B\\
		{{I_n}}
\end{array}} \right)$, where  $D=\operatorname{diag}(d_1, d_2, \dots, d_n)$ with $d_i \in \mathbb{Z}^+$ and
\[B = \left( {\begin{array}{*{20}{c}}
		0&{{b_{12}}}& \cdots &{{b_{1n}}}\\
		{{b_{21}}}& \ddots & \ddots & \vdots \\
		\vdots & \ddots & \ddots &{{b_{n - 1n}}}\\
		{{b_{n1}}}& \cdots &{{b_{nn - 1}}}&0
\end{array}} \right)\] with $d_jb_{ji}=-d_ib_{ij}.$
For any $k \in [1,n]$, denote $\big(\widetilde{\mathbf{x}}_k, \Lambda_k, \widetilde{B}_k\big) :=\mu_k \big(\widetilde{\mathbf{x}},\Lambda,\widetilde{B}\big)$
where $\widetilde{\mathbf{x}}_k=\{x_1,\cdots,x_{k-1},y_k:=x_k',x_{k+1},\cdots,x_{2n}\}$, $\Lambda_k$ and $ \widetilde{B}_k$ are given by formulas (\ref{(B)}) and (\ref{(A)}).

Define a linear order  $\triangleleft$ on $[1,n]$.
The notation $ \prod\limits_{j \in [1,n]}^ \triangleleft$ represents that the product is taken in increasing order with
respect to $\triangleleft$.
\begin{lemma}\label{t-h}
For any integers $ t\geq 1 $ and $i \in [1,n]$,	we have
	\begin{equation}\label{(ynxn)n1}
y_i^tx_i^t = \prod\limits_{r = 1}^t {\left( {\prod\limits_{k \in [1,n]}^ \triangleleft  {x_k^{{{\left[ { - {b_{ki}}} \right]}_ + }}}  + {q^{ - \frac{{{d_i}}}{2} + {d_i}r}}\prod\limits_{k \in [1,n]}^ \triangleleft  {x_k^{{{\left[ {{b_{ki}}} \right]}_ + }}}  \cdot {x_{n + i}}} \right)}  ,
	\end{equation}
	\begin{equation}\label{(xnyn)n1}
		x_i^ty_i^t = \prod\limits_{r = 1}^t {\left( {\prod\limits_{k \in [1,n]}^ \triangleleft  {x_k^{{{\left[ { - {b_{ki}}} \right]}_ + }}}  + {q^{\frac{{{d_i}}}{2} - {d_i}r}}\prod\limits_{k \in [1,n]}^ \triangleleft  {x_k^{{{\left[ {{b_{ki}}} \right]}_ + }}}  \cdot {x_{n + i}}} \right)}   .
		\end{equation}			
\end{lemma}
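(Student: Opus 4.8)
The plan is to carry out the whole computation inside the based quantum torus $\mathcal{T}$, where $y_i=x_i'$ is literally a sum of two monomials, and to reduce the two identities to a short list of elementary commutation relations together with an induction on $t$. Since $\widetilde{B}=\begin{bmatrix}B\\ I_n\end{bmatrix}$, the $i$-th column of $\widetilde{B}$ is $\mathbf{b}_i=\sum_{k=1}^n b_{ki}\,\mathbf{e}_k+\mathbf{e}_{n+i}$, so that by \eqref{(er)}
\[ y_i=X^{-\mathbf{e}_i+\mathbf{p}_i+\mathbf{e}_{n+i}}+X^{-\mathbf{e}_i+\mathbf{n}_i},\qquad \mathbf{p}_i:=\sum_{k=1}^n[b_{ki}]_+\,\mathbf{e}_k,\quad \mathbf{n}_i:=\sum_{k=1}^n[-b_{ki}]_+\,\mathbf{e}_k . \]
Put $P:=\prod_{k\in[1,n]}^{\triangleleft}x_k^{[-b_{ki}]_+}$ and $N:=\prod_{k\in[1,n]}^{\triangleleft}x_k^{[b_{ki}]_+}\cdot x_{n+i}$. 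The crucial structural remark is that the principal $n\times n$ block of $\Lambda$ is zero, so $x_1,\dots,x_n$ pairwise commute; consequently $P$ and $N$ do not depend on the order $\triangleleft$, and a direct reading of \eqref{(ef)} gives $P=X^{\mathbf{n}_i}$ and $N=X^{\mathbf{p}_i+\mathbf{e}_{n+i}}$, the $q$-prefactors being trivial because $\mathbf{p}_i,\mathbf{n}_i$ are supported on $[1,n]$ and $b_{ii}=0$.

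The next step is to establish, using only the explicit block form $\Lambda=\begin{bmatrix}0&-D\\ D&-DB\end{bmatrix}$ and the multiplication rule \eqref{(ef)}, the identities
\[ y_ix_i=P+q^{d_i/2}N,\qquad x_iy_i=P+q^{-d_i/2}N,\qquad PN=NP, \]
\[ x_iP=Px_i,\quad Nx_i=q^{d_i}x_iN,\qquad y_iP=Py_i,\quad Ny_i=q^{-d_i}y_iN . \]
Each of these is a single $\Lambda$-pairing computation: for instance $y_ix_i=P+q^{d_i/2}N$ needs only $\Lambda(\mathbf{e}_{n+i},\mathbf{e}_i)=d_i$ together with the vanishing of $\Lambda$ on the $[1,n]$-part; $Nx_i=q^{d_i}x_iN$ follows from $\Lambda(\mathbf{p}_i+\mathbf{e}_{n+i},\mathbf{e}_i)=d_i$; and $Ny_i=q^{-d_i}y_iN$ from pairing $\mathbf{p}_i+\mathbf{e}_{n+i}$ against each of the two exponent vectors of $y_i$ (both pairings equal $-d_i$, using again $b_{ii}=0$).

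Finally, \eqref{(ynxn)n1} follows by induction on $t$: the case $t=1$ is exactly $y_ix_i=P+q^{-d_i/2+d_i}N$, and for the step one writes $y_i^{t}x_i^{t}=y_i^{t-1}(y_ix_i)x_i^{t-1}=y_i^{t-1}(P+q^{d_i/2}N)x_i^{t-1}$ and moves $x_i^{t-1}$ to the left through $P$ (which commutes) and through $N$ (using $Nx_i^{t-1}=q^{(t-1)d_i}x_i^{t-1}N$), obtaining $(y_i^{t-1}x_i^{t-1})\,(P+q^{-d_i/2+td_i}N)$; the induction hypothesis together with $PN=NP$ then identifies this with $\prod_{r=1}^{t}(P+q^{-d_i/2+d_i r}N)$. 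Identity \eqref{(xnyn)n1} is obtained symmetrically, starting from $x_i^{t}y_i^{t}=x_i^{t-1}(x_iy_i)y_i^{t-1}$ and commuting $y_i^{t-1}$ to the left through $P+q^{-d_i/2}N$ via $Ny_i^{t-1}=q^{-(t-1)d_i}y_i^{t-1}N$, which produces the extra factor $P+q^{d_i/2-td_i}N$. The only genuine work is the bookkeeping in the second step—keeping every sign, every $d_i$, and the distinction between $b_{ki}$ and $b_{ik}$ straight—but once one observes that the principal block of $\Lambda$ vanishes, every pairing collapses to a single contribution and all verifications are routine; no appeal to the $q$-binomial identities of Section 2 is needed for this lemma.
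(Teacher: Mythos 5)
Your proof is correct and follows essentially the same route as the paper: induction on $t$, with the inductive step obtained by inserting the exchange relation $y_ix_i=P+q^{d_i/2}N$ (resp.\ $x_iy_i=P+q^{-d_i/2}N$) in the middle and commuting $x_i^{t-1}$ (resp.\ $y_i^{t-1}$) through via $Nx_i=q^{d_i}x_iN$ and $Ny_i=q^{-d_i}y_iN$. The only difference is cosmetic: you make explicit, as $\Lambda$-pairing computations in the quantum torus, the commutation facts that the paper uses silently, and all of those checks are accurate.
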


\begin{proof}
	We prove (\ref{(ynxn)n1}) by induction on $t$.  The proof  of (\ref{(xnyn)n1}) is similar.

	When $t=1$, the proof follows from the exchange relation
\[{y_i} = {q^{ - \frac{{{d_i}}}{2}}}x_i^{ - 1}  {\prod\limits_{k \in [1,n]}^ \triangleleft  {x_k^{{{\left[ {{b_{ki}}} \right]}_ + }}} } \cdot{x_{n + i}} +   {\prod\limits_{k \in [1,n]}^ \triangleleft  {x_k^{{{\left[ { - {b_{ki}}} \right]}_ + }}} } \cdot x_i^{ - 1}.\]

Assume that the equation holds when $t=l$. When $t=l+1$, we have	
\begin{eqnarray*}
&&	y_i^{l + 1}x_i^{l + 1} = y_i^l\left( {\prod\limits_{k \in [1,n]}^ \triangleleft  {x_k^{{{\left[ { - {b_{ki}}} \right]}_ + }}}  + {q^{\frac{{{d_i}}}{2}}}\prod\limits_{k \in [1,n]}^ \triangleleft  {x_k^{{{\left[ {{b_{ki}}} \right]}_ + }}}  \cdot {x_{n + i}}} \right)x_i^l\\
	\end{eqnarray*}		
\begin{eqnarray*}
	&	= &y_i^lx_i^l\prod\limits_{k \in [1,n]}^ \triangleleft  {x_k^{{{\left[ { - {b_{ki}}} \right]}_ + }}}  + {q^{\frac{{{d_i}}}{2}}}y_i^l\prod\limits_{k \in [1,n]}^ \triangleleft  {x_k^{{{\left[ {{b_{ki}}} \right]}_ + }}}  \cdot {x_{n + i}}x_i^l\\
&	=&y_i^lx_i^l\left( {\prod\limits_{k \in [1,n]}^ \triangleleft  {x_k^{{{\left[ { - {b_{ki}}} \right]}_ + }}}  + {q^{ - \frac{{{d_i}}}{2} + {d_i}(l + 1)}}\prod\limits_{k \in [1,n]}^ \triangleleft  {x_k^{{{\left[ {{b_{ki}}} \right]}_ + }}}  \cdot {x_{n + i}}} \right)\\
&	=&\prod\limits_{r = 1}^l {\left( {\prod\limits_{k \in [1,n]}^ \triangleleft  {x_k^{{{\left[ { - {b_{ki}}} \right]}_ + }}}  + {q^{ - \frac{{{d_i}}}{2} + {d_i}r}}\prod\limits_{k \in [1,n]}^ \triangleleft  {x_k^{{{\left[ {{b_{ki}}} \right]}_ + }}}  \cdot {x_{n + i}}} \right)} \\
&&	\cdot \left( {\prod\limits_{k \in [1,n]}^ \triangleleft  {x_k^{{{\left[ { - {b_{ki}}} \right]}_ + }}}  + {q^{ - \frac{{{d_i}}}{2} + {d_i}(l + 1)}}\prod\limits_{k \in [1,n]}^ \triangleleft  {x_k^{{{\left[ {{b_{ki}}} \right]}_ + }}}  \cdot {x_{n + i}}} \right)\\
	&=& \prod\limits_{r = 1}^{l + 1} {\left( {\prod\limits_{k \in [1,n]}^ \triangleleft  {x_k^{{{\left[ { - {b_{ki}}} \right]}_ + }}}  + {q^{ - \frac{{{d_i}}}{2} + {d_i}r}}\prod\limits_{k \in [1,n]}^ \triangleleft  {x_k^{{{\left[ {{b_{ki}}} \right]}_ + }}}  \cdot {x_{n + i}}} \right)}.
\end{eqnarray*}

	This completes the proof.
\end{proof}
\begin{corollary}
	For any integers $ t\geq 1 $  and $i \in [1,n]$,	we have
	\begin{equation}\label{(ynxn)n}
y_i^tx_i^t = \sum\limits_{k = 0}^t {{{\left[ {\begin{array}{*{20}{c}}
						t\\
						k
				\end{array}} \right]}_{{q^{{d_i}}}}}{{\left( {{q^{{d_i}}}} \right)}^{\frac{{{k^2}}}{2}}}\prod\limits_{v \in [1,n]}^ \triangleleft  {x_v^{(t - k){{\left[ { - {b_{vi}}} \right]}_ + }}}  \cdot \prod\limits_{v \in [1,n]}^ \triangleleft  {x_v^{k{{\left[ {{b_{vi}}} \right]}_ + }}}  \cdot x_{n + i}^k} ,
	\end{equation}
	\begin{equation}\label{(xnyn)n}
x_i^ty_i^t = \sum\limits_{k = 0}^t {{{\left[ {\begin{array}{*{20}{c}}
						t\\
						k
				\end{array}} \right]}_{{q^{{d_i}}}}}{{\left( {{q^{{d_i}}}} \right)}^{\frac{{k(k - 2t)}}{2}}}\prod\limits_{v \in [1,n]}^ \triangleleft  {x_v^{(t - k){{\left[ { - {b_{vi}}} \right]}_ + }}}  \cdot \prod\limits_{v \in [1,n]}^ \triangleleft  {x_v^{k{{\left[ {{b_{vi}}} \right]}_ + }}}  \cdot x_{n + i}^k}   .
	\end{equation}			
\end{corollary}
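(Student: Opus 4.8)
The plan is to expand the two products in Lemma~\ref{t-h} by the $q$-binomial theorem of Corollary~\ref{1q-erxiangshi}. Set $A:=\prod_{k\in[1,n]}^{\triangleleft} x_k^{[-b_{ki}]_+}$, $Q:=\prod_{k\in[1,n]}^{\triangleleft} x_k^{[b_{ki}]_+}$ and $C:=Q\cdot x_{n+i}$, so that \eqref{(ynxn)n1} and \eqref{(xnyn)n1} become $y_i^tx_i^t=\prod_{r=1}^t\bigl(A+q^{-d_i/2+d_ir}C\bigr)$ and $x_i^ty_i^t=\prod_{r=1}^t\bigl(A+q^{d_i/2-d_ir}C\bigr)$. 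The right-hand sides of \eqref{(ynxn)n} and \eqref{(xnyn)n} are these same products regrouped by the exponent $k$ of $C$, with $\prod_v^{\triangleleft}x_v^{(t-k)[-b_{vi}]_+}\cdot\prod_v^{\triangleleft}x_v^{k[b_{vi}]_+}\cdot x_{n+i}^k$ playing the role of $A^{t-k}C^k$, so the proof splits into a commutation step and an expansion step.

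For the commutation step I would record that, because $\Lambda=\bigl(\begin{smallmatrix}0&-D\\ D&-DB\end{smallmatrix}\bigr)$ has zero upper-left block, $x_ux_v=x_vx_u$ for all $u,v\in[1,n]$; because its lower-left block is $D=\operatorname{diag}(d_1,\dots,d_n)$, we have $\lambda_{n+i,v}=d_i\delta_{iv}$, so $x_{n+i}$ commutes with every $x_v$, $v\in[1,n]\setminus\{i\}$; and because $b_{ii}=0$, the index $i$ appears in the support of neither $A$ nor $Q$. Hence every factor of $A$ commutes with every factor of $C$. In particular $AC=CA$, the power $A^{t-k}$ equals $\prod_v^{\triangleleft}x_v^{(t-k)[-b_{vi}]_+}$ with no extra power of $q$, and $C^k=Q^kx_{n+i}^k=\prod_v^{\triangleleft}x_v^{k[b_{vi}]_+}\cdot x_{n+i}^k$; thus $A^{t-k}C^k$ is exactly the monomial appearing in \eqref{(ynxn)n} and \eqref{(xnyn)n}. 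This structural observation is the only real content; everything after it is bookkeeping.

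For the expansion step I would apply Corollary~\ref{1q-erxiangshi} with the central unit $q^{d_i}$ (resp. $q^{-d_i}$) in place of the formal variable, which is legitimate since $q^{\pm d_i/2}\in\mathbb{Z}[q^{\pm1/2}]$. Taking $y=A$ and $x=q^{-d_i/2}C$, one has $q^{-d_i/2+d_ir}C=(q^{d_i})^r x$, and Corollary~\ref{1q-erxiangshi} gives $y_i^tx_i^t=\sum_{k=0}^t\bigl[\begin{smallmatrix}t\\ k\end{smallmatrix}\bigr]_{q^{d_i}}(q^{d_i})^{k(k+1)/2}q^{-d_ik/2}A^{t-k}C^k$; since $\tfrac{d_ik(k+1)}{2}-\tfrac{d_ik}{2}=\tfrac{d_ik^2}{2}$ this is \eqref{(ynxn)n}. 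Taking instead $y=A$ and $x=q^{d_i/2}C$, one has $q^{d_i/2-d_ir}C=(q^{-d_i})^r x$, so $x_i^ty_i^t=\sum_{k=0}^t\bigl[\begin{smallmatrix}t\\ k\end{smallmatrix}\bigr]_{q^{-d_i}}(q^{-d_i})^{k(k+1)/2}q^{d_ik/2}A^{t-k}C^k$; rewriting $\bigl[\begin{smallmatrix}t\\ k\end{smallmatrix}\bigr]_{q^{-d_i}}=(q^{d_i})^{-k(t-k)}\bigl[\begin{smallmatrix}t\\ k\end{smallmatrix}\bigr]_{q^{d_i}}$ by \eqref{q+-b1} and collecting the exponent $-d_ik(t-k)-\tfrac{d_ik(k+1)}{2}+\tfrac{d_ik}{2}=\tfrac{d_ik(k-2t)}{2}$ yields \eqref{(xnyn)n}. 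The main obstacle, then, is the commutation step — noticing that the relevant monomials genuinely commute, thanks to the block form of $\Lambda$ and the vanishing of $b_{ii}$ — after which the $q$-arithmetic is routine.
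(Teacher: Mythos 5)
Your proof is correct and follows essentially the same route as the paper: rewrite the factored forms from Lemma \ref{t-h} as $\prod_r\bigl(y+(q^{\pm d_i})^r x\bigr)$ and expand with Corollary \ref{1q-erxiangshi}, then absorb the central power $q^{\mp d_ik/2}$ into the exponent. The only additions are your explicit verification of the commutation hypotheses (which the paper uses tacitly) and your worked-out derivation of \eqref{(xnyn)n} via \eqref{q+-b1}, which the paper dismisses as "similar"; both check out.
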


\begin{proof}
We only prove the first equation, and the proof of the second one is similar.	

According to Corollary \ref{1q-erxiangshi} and Lemma \ref{t-h}, we have	
\begin{eqnarray*}
&&	y_i^tx_i^t = \prod\limits_{r = 1}^t {\left( {\prod\limits_{k \in [1,n]}^ \triangleleft  {x_k^{{{\left[ { - {b_{ki}}} \right]}_ + }}}  + {q^{ - \frac{{{d_i}}}{2} + {d_i}r}}\prod\limits_{k \in [1,n]}^ \triangleleft  {x_k^{{{\left[ {{b_{ki}}} \right]}_ + }}}  \cdot {x_{n + i}}} \right)} \\
	&=&\prod\limits_{r = 1}^t {\left( {\left( {\prod\limits_{k \in [1,n]}^ \triangleleft  {x_k^{{{\left[ { - {b_{ki}}} \right]}_ + }}} } \right) + {{\left( {{q^{{d_i}}}} \right)}^r}\left( {{q^{ - \frac{{{d_i}}}{2}}}\prod\limits_{k \in [1,n]}^ \triangleleft  {x_k^{{{\left[ {{b_{ki}}} \right]}_ + }}}  \cdot {x_{n + i}}} \right)} \right)} \\
&	= &\sum\limits_{k = 0}^t {{{\left[ {\begin{array}{*{20}{c}}
						t\\
						k
				\end{array}} \right]}_{q^{{d_i}}}}{{\left( {{q^{{d_i}}}} \right)}^{\frac{{k(k + 1)}}{2}}}\prod\limits_{v \in [1,n]}^ \triangleleft  {x_v^{(t - k){{\left[ { - {b_{vi}}} \right]}_ + }}}  \cdot {q^{ - \frac{{{d_i}k}}{2}}}\prod\limits_{v \in [1,n]}^ \triangleleft  {x_v^{k{{\left[ {{b_{vi}}} \right]}_ + }}}  \cdot x_{n + i}^k} \\
&	=&\sum\limits_{k = 0}^t {{{\left[ {\begin{array}{*{20}{c}}
						t\\
						k
				\end{array}} \right]}_{q^{{d_i}}}}{{\left( {{q^{{d_i}}}} \right)}^{\frac{{{k^2}}}{2}}}\prod\limits_{v \in [1,n]}^ \triangleleft  {x_v^{(t - k){{\left[ { - {b_{vi}}} \right]}_ + }}}  \cdot \prod\limits_{v \in [1,n]}^ \triangleleft  {x_v^{k{{\left[ {{b_{vi}}} \right]}_ + }}}  \cdot x_{n + i}^k} .
\end{eqnarray*}	

The proof follows.
\end{proof}

\begin{lemma}\label{lemma2}
For any  $i,j \in [1,n]$,

\begin{enumerate}\itemsep=0pt
\item when $b_{ij}<0$, we have
\begin{equation}\label{(lemma2)}
\sum\limits_{t = 0}^{ - {b_{ij}}} {{{\left( {{q^{{d_i}}}} \right)}^{ - t}}\left(\sum\limits_{r = 0}^t {{D_r}} \right)  y_i^{ - {b_{ij}} - t}x_i^{ - {b_{ij}} - 1}y_i^t}  = 0,
\end{equation}
where ${D_r} = {( - 1)^r}{\left( {{q^{{d_i}}}} \right)^{\frac{{r(r - 1)}}{2}}}{\left[ {\begin{array}{*{20}{c}}
				{ - {b_{ij}} + 1}\\
				r
		\end{array}} \right]_{{q^{{d_i}}}}};$
\item when $b_{ij}>0$, we have 	
\begin{equation}\label{(lemma21)}
\sum\limits_{t = 0}^{{b_{ij}}} {{{\left( {{q^{{d_i}}}} \right)}^{t{b_{ij}}}}\left(\sum\limits_{r = 0}^t {{F_r}}\right)   y_i^{{b_{ij}} - t}x_i^{{b_{ij}} - 1}y_i^t}  = 0,
\end{equation}
where ${F_r} = {( - 1)^r}{\left( {{q^{{d_i}}}} \right)^{\frac{{r(r - 1)}}{2} - {b_{ij}}r}}{\left[ {\begin{array}{*{20}{c}}
{{b_{ij}} + 1}\\
r
\end{array}} \right]_{{q^{{d_i}}}}}.$	
\end{enumerate}	
\end{lemma}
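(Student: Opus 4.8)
I work in the localisation of the quantum torus $\mathcal{T}$ at $x_i$ and set $q_i:=q^{d_i}$,
\[
P_i:=\prod_{k\in[1,n]}^{\triangleleft}x_k^{[-b_{ki}]_+},\qquad M_i:=\prod_{k\in[1,n]}^{\triangleleft}x_k^{[b_{ki}]_+}\cdot x_{n+i},
\]
so that the exchange relation reads $y_i=x_i^{-1}\bigl(q_i^{-1/2}M_i+P_i\bigr)$. Since $b_{ii}=0$, neither $P_i$ nor $M_i$ involves $x_i$; since the principal part of $\Lambda$ vanishes, $x_1,\dots,x_n$ all commute; and $x_ix_{n+i}=q_i^{-1}x_{n+i}x_i$ because $\lambda_{i,n+i}=-d_i$. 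From these I extract exactly the relations needed in the sequel: $P_iM_i=M_iP_i$, $x_iP_i=P_ix_i$, and $x_i^{-1}M_i=q_iM_ix_i^{-1}$.

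The engine is the family $H_v:=q_i^{v+1/2}M_i+P_i$, $v\in\mathbb{Z}$. As $P_i,M_i$ commute, the $H_v$ commute pairwise; reading off the exponents in Lemma \ref{t-h} one gets $y_i^{s}x_i^{s}=\prod_{v=0}^{s-1}H_v$ and $x_i^{s}y_i^{s}=\prod_{v=1}^{s}H_{-v}$ for $s\ge 0$; and the three relations above yield the shift rule $x_i^{-1}H_v=H_{v+1}x_i^{-1}$. Writing $d=-b_{ij}$ in case (1) and $e=b_{ij}$ in case (2), the trivial factorisation $x_i^{d-1}=x_i^{d-t}\cdot x_i^{-1}\cdot x_i^{t}$ gives $y_i^{d-t}x_i^{d-1}y_i^{t}=(y_i^{d-t}x_i^{d-t})\,x_i^{-1}\,(x_i^{t}y_i^{t})$, and after pushing $x_i^{-1}$ to the right via the shift rule and cancelling the single overlapping factor $H_0$ of the two runs this becomes
\[
y_i^{d-t}x_i^{d-1}y_i^{t}=H_0\Bigl(\prod_{v=1-t}^{d-t-1}H_v\Bigr)x_i^{-1}\qquad(0\le t\le d),
\]
and similarly with $d$ replaced by $e$. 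Thus as $t$ varies one is merely sliding a window of $d-1$ consecutive factors $H_v$.

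Substituting this into the left-hand side of \eqref{(lemma2)} (resp.\ \eqref{(lemma21)}), $H_0$ and $x_i^{-1}$ factor out; rewriting $\prod_{v=1-t}^{d-t-1}H_v=\prod_{w=1}^{d-1}\bigl(P_i+q_i^{w}(q_i^{1/2-t}M_i)\bigr)$ and applying Corollary \ref{1q-erxiangshi} produces $\sum_{k=0}^{d-1}\binom{d-1}{k}_{q_i}q_i^{k(k+1)/2+k(1/2-t)}P_i^{d-1-k}M_i^{k}$, in which the entire dependence on $t$ is the lone factor $q_i^{-kt}$. Extracting the coefficient of $P_i^{d-1-k}M_i^{k}$, case (1) collapses to $\sum_{t=0}^{d}q_i^{-t(k+1)}\sum_{r=0}^{t}D_r=0$ for each $k\in[0,d-1]$, which is Lemma \ref{lemma3}(1) with $q\mapsto q_i$, $n\mapsto d$ and index $k+1\in[1,d]$; case (2) collapses to $\sum_{t=0}^{e}q_i^{t(e-k)}\sum_{r=0}^{t}F_r=0$, which is Lemma \ref{lemma3}(2) with $n\mapsto e$, $v\mapsto e$ and index $k\in[0,e-1]$. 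Hence both sums vanish.

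The one genuinely delicate point is reaching the displayed structural identity: one must split $x_i^{d-1}$ precisely as $x_i^{d-t}\cdot x_i^{-1}\cdot x_i^{t}$, recognise the two product formulas of Lemma \ref{t-h} as runs of consecutive $H_v$'s, and observe both that conjugating by $x_i^{-1}$ merely shifts the indices and that the two runs overlap in exactly one factor $H_0$, which is independent of $t$. Everything afterwards is bookkeeping plus the combinatorial identities of Lemma \ref{lemma3}, and the two cases run in exact parallel, differing only in which half of Lemma \ref{lemma3} is invoked.
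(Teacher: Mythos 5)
Your proof is correct, and it takes a genuinely different (and leaner) route than the paper's. Both arguments ultimately feed into the same combinatorial identities of Lemma \ref{lemma3}, and both rest on the product formulas of Lemma \ref{t-h}; the difference lies in how the reduction to those identities is achieved. The paper splits $x_i^{-b_{ij}-1}y_i^t$ as $x_i^{-b_{ij}-t}\cdot x_i^{t-1}y_i^{t-1}\cdot y_i$, expands both pieces into the binomial sums \eqref{(ynxn)n} and \eqref{(xnyn)n}, extracts the coefficient of each power $x_{n+i}^k$, and then needs the $q$-Vandermonde convolution \eqref{(n-dk)} to recombine the product of two $q$-binomial coefficients into a single $\left[\begin{smallmatrix}-b_{ij}-1\\ k\end{smallmatrix}\right]_{q^{d_i}}$ before Lemma \ref{lemma3} can be applied. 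You instead split $x_i^{d-1}=x_i^{d-t}\cdot x_i^{-1}\cdot x_i^{t}$ and stay at the level of the commuting linear factors $H_v$, so that varying $t$ merely slides a window of $d-1$ consecutive indices (with the doubled $H_0$ pulled out front); a single application of Corollary \ref{1q-erxiangshi} then isolates the $t$-dependence as $q_i^{-kt}$ and Lemma \ref{lemma3} finishes. This makes the Vandermonde identity \eqref{(n-dk)} and the heavy coefficient bookkeeping unnecessary, at the cost of having to verify the shift rule $x_i^{-1}H_v=H_{v+1}x_i^{-1}$ and the overlap of the two runs of factors (including the degenerate cases $t=0$ and $t=d$, which your formula does handle correctly). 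One cosmetic quibble: "cancelling the single overlapping factor $H_0$" is misleading wording — nothing cancels; the repeated $H_0$ is factored out front — but the displayed identity is right.
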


\begin{proof}
\begin{enumerate}\itemsep=0pt
\item According to  (\ref{(A)}), we have
\begin{eqnarray*}
&&	(\Lambda_{i})_{in+i} =  - {\lambda _{in + i}} + \sum\limits_{t = 1}^{2n} {{{[{b_{ti}}]}_ + }{\lambda _{tn + i}}}  = {d_i} + \sum\limits_{t = 1}^n {{{[{b_{ti}}]}_ + }{\lambda _{tn + i}}}  + \sum\limits_{t = n + 1}^{2n} {{{[{b_{ti}}]}_ + }{\lambda _{tn + i}}} \\
	&	=& {d_i} + {[{b_{ii}}]_ + }{\lambda _{in + i}} + {[{b_{n + ii}}]_ + }{\lambda _{n + in + i}} = {d_i}.
\end{eqnarray*}	 	
	
According to  (\ref{(ynxn)n}) and (\ref{(xnyn)n}), we have
	\begin{eqnarray*}
&&	\sum\limits_{t = 0}^{ - {b_{ij}}} {{{\left( {{q^{{d_i}}}} \right)}^{ - t}}\left(\sum\limits_{r = 0}^t {{D_r}}\right)   y_i^{ - {b_{ij}} - t}x_i^{ - {b_{ij}} - 1}y_i^t} \\
&	=& y_i^{ - {b_{ij}}}x_i^{ - {b_{ij}} - 1} + \sum\limits_{t = 1}^{ - {b_{ij}}} {{{\left( {{q^{{d_i}}}} \right)}^{ - t}}\left(\sum\limits_{r = 0}^t {{D_r}}\right)   y_i^{ - {b_{ij}} - t}x_i^{ - {b_{ij}} - t}x_i^{t - 1}y_i^{t - 1}{y_i}} \\
&	=&\sum\limits_{t = 1}^{ - {b_{ij}}} {\left( {{{\left( {{q^{{d_i}}}} \right)}^{ - t}}\left(\sum\limits_{r = 0}^t {{D_r}} \right)  \sum\limits_{k = 0}^{ - {b_{ij}} - t} {\left( {{{\left[ {\begin{array}{*{20}{c}}
									{ - {b_{ij}} - t}\\
									k
							\end{array}} \right]}_{{q^{{d_i}}}}}{{\left( {{q^{{d_i}}}} \right)}^{\frac{{{k^2}}}{2}}}{C_{ - {b_{ij}} - t,k}}x_{n + i}^k} \right)} } \right.} \\
&&	\left. { \cdot \sum\limits_{k = 0}^{t - 1} {\left( {{{\left[ {\begin{array}{*{20}{c}}
								{t - 1}\\
								k
						\end{array}} \right]}_{{q^{{d_i}}}}}{{\left( {{q^{{d_i}}}} \right)}^{\frac{{k(k - 2(t - 1))}}{2}}}{C_{t - 1,k}}x_{n + i}^k} \right)} } \right){y_i} \\
					&&+ {y_i}\sum\limits_{k = 0}^{ - {b_{ij}} - 1} {{{\left[ {\begin{array}{*{20}{c}}
						{ - {b_{ij}} - 1}\\
						k
				\end{array}} \right]}_{{q^{{d_i}}}}}{{\left( {{q^{{d_i}}}} \right)}^{\frac{{{k^2}}}{2}}}{C_{ - {b_{ij}} - 1,k}}x_{n + i}^k} \\
&	=& \sum\limits_{t = 1}^{ - {b_{ij}}} {\left( {{{\left( {{q^{{d_i}}}} \right)}^{ - t}}\left(\sum\limits_{r = 0}^t {{D_r}} \right)   \sum\limits_{k = 0}^{ - {b_{ij}} - t} {\left( {{{\left[ {\begin{array}{*{20}{c}}
									{ - {b_{ij}} - t}\\
									k
							\end{array}} \right]}_{{q^{{d_i}}}}}{{\left( {{q^{{d_i}}}} \right)}^{\frac{{{k^2}}}{2}}}{C_{ - {b_{ij}} - t,k}}x_{n + i}^k} \right)} } \right.} \\
&&	\left. { \cdot \sum\limits_{k = 0}^{t - 1} {\left( {{{\left[ {\begin{array}{*{20}{c}}
								{t - 1}\\
								k
						\end{array}} \right]}_{{q^{{d_i}}}}}{{\left( {{q^{{d_i}}}} \right)}^{\frac{{k(k - 2(t - 1))}}{2}}}{C_{t - 1,k}}x_{n + i}^k} \right)} } \right){y_i} \\
							\end{eqnarray*}		
					\begin{eqnarray*}
					&&+ \left( {\sum\limits_{k = 0}^{ - {b_{ij}} - 1} {{{\left[ {\begin{array}{*{20}{c}}
							{ - {b_{ij}} - 1}\\
							k
					\end{array}} \right]}_{{q^{{d_i}}}}}{{\left( {{q^{{d_i}}}} \right)}^{\frac{{{k^2} + 2k}}{2}}}{C_{ - {b_{ij}} - 1,k}}x_{n + i}^k} } \right){y_i},\\
	\end{eqnarray*}
	where ${C_{l,k}} = \prod\limits_{v \in [1,n]}^ \triangleleft  {x_v^{(l - k){{\left[ { - {b_{vi}}} \right]}_ + }}}  \cdot \prod\limits_{v \in [1,n]}^ \triangleleft  {x_v^{k{{\left[ {{b_{vi}}} \right]}_ + }}} $. \\
	
For any $k \in [0,- {b_{ij}}-1]$, we compute
\begin{eqnarray*}
	&&	{\left[ {\begin{array}{*{20}{c}}
					{ - {b_{ij}} - 1}\\
					k
			\end{array}} \right]_{{q^{{d_i}}}}}{\left( {{q^{{d_i}}}} \right)^{\frac{{{k^2} + 2k}}{2}}}{C_{ - {b_{ij}} - 1,k}}x_{n + i}^k\\
		&& + {\left( {{q^{{d_i}}}} \right)^{ - 1}}\left(\sum\limits_{r = 0}^1 {{D_r}}\right)   {\left[ {\begin{array}{*{20}{c}}
					{ - {b_{ij}} - 1}\\
					k
			\end{array}} \right]_{{q^{{d_i}}}}}{\left( {{q^{{d_i}}}} \right)^{\frac{{{k^2}}}{2}}}{C_{ - {b_{ij}} - 1,k}}x_{n + i}^k{\rm{ }}\\
	&&		+ {\left( {{q^{{d_i}}}} \right)^{ - 2}}\left( {\sum\limits_{r = 0}^2 {{D_r}} } \right)\left( {{{\left[ {\begin{array}{*{20}{c}}
							{ - {b_{ij}} - 2}\\
							k
					\end{array}} \right]}_{{q^{{d_i}}}}}{{\left[ {\begin{array}{*{20}{c}}
							1\\
							0
					\end{array}} \right]}_{{q^{{d_i}}}}}{{\left( {{q^{{d_i}}}} \right)}^{\frac{{{k^2}}}{2}}}{C_{ - {b_{ij}} - 2,k}}{C_{1,0}}x_{n + i}^k} \right.\\
	&&	\left. { + {{\left[ {\begin{array}{*{20}{c}}
							{ - {b_{ij}} - 2}\\
							{k - 1}
					\end{array}} \right]}_{{q^{{d_i}}}}}{{\left[ {\begin{array}{*{20}{c}}
							1\\
							1
					\end{array}} \right]}_{{q^{{d_i}}}}}{{\left( {{q^{{d_i}}}} \right)}^{\frac{{{{(k - 1)}^2} - 1}}{2}}}{C_{ - {b_{ij}} - 2,k - 1}}{C_{1,1}}x_{n + i}^k} \right)\\
	&&  +  \cdots  + {\left( {{q^{{d_i}}}} \right)^{ - t}}\left( {\sum\limits_{r = 0}^t {{D_r}} } \right)\left( {{{\left[ {\begin{array}{*{20}{c}}
						{ - {b_{ij}} - t}\\
						k
				\end{array}} \right]}_{{q^{{d_i}}}}}{{\left[ {\begin{array}{*{20}{c}}
						{t - 1}\\
						0
				\end{array}} \right]}_{{q^{{d_i}}}}}{{\left( {{q^{{d_i}}}} \right)}^{\frac{{{k^2}}}{2}}}{C_{ - {b_{ij}} - t,k}}{C_{t - 1,0}}x_{n + i}^k} \right. \\
	&&	+ {\left[ {\begin{array}{*{20}{c}}
					{ - {b_{ij}} - t}\\
					{k - 1}
			\end{array}} \right]_{{q^{{d_i}}}}}{\left[ {\begin{array}{*{20}{c}}
					{t - 1}\\
					1
			\end{array}} \right]_{{q^{{d_i}}}}}{\left( {{q^{{d_i}}}} \right)^{\frac{{{{(k - 1)}^2} + (1 - 2(t - 1))}}{2}}}{C_{ - {b_{ij}} - t,k - 1}}{C_{t - 1,1}}x_{n + i}^k\\
		&&	+  \cdots  + {\left[ {\begin{array}{*{20}{c}}
					{ - {b_{ij}} - t}\\
					{k - s}
			\end{array}} \right]_{{q^{{d_i}}}}}{\left[ {\begin{array}{*{20}{c}}
					{t - 1}\\
					s
			\end{array}} \right]_{{q^{{d_i}}}}}{\left( {{q^{{d_i}}}} \right)^{\frac{{{{(k - s)}^2} + s(s - 2(t - 1))}}{2}}}{C_{ - {b_{ij}} - t,k - s}}{C_{t - 1,s}}x_{n + i}^k{\rm{ }}\\
	&&	+  \cdots  + {\left[ {\begin{array}{*{20}{c}}
					{ - {b_{ij}} - t}\\
					1
			\end{array}} \right]_{{q^{{d_i}}}}}{\left[ {\begin{array}{*{20}{c}}
					{t - 1}\\
					{k - 1}
			\end{array}} \right]_{{q^{{d_i}}}}}{\left( {{q^{{d_i}}}} \right)^{\frac{{1 + (k - 1)(k - 1 - 2(t - 1))}}{2}}}{C_{ - {b_{ij}} - t,1}}{C_{t - 1,k - 1}}x_{n + i}^k{\rm{ }}\\			
	&&  \left. { + {{\left[ {\begin{array}{*{20}{c}}
						{ - {b_{ij}} - t}\\
						0
				\end{array}} \right]}_{{q^{{d_i}}}}}{{\left[ {\begin{array}{*{20}{c}}
						{t - 1}\\
						k
				\end{array}} \right]}_{{q^{{d_i}}}}}{{\left( {{q^{{d_i}}}} \right)}^{\frac{{k(k - 2(t - 1))}}{2}}}{C_{ - {b_{ij}} - t,0}}{C_{t - 1,k}}x_{n + i}^k} \right)\\
	&&	+  \cdots  + {\left( {{q^{{d_i}}}} \right)^{{b_{ij}}}}\left(\sum\limits_{r = 0}^{ - {b_{ij}}} {{D_r}}\right)   {\left[ {\begin{array}{*{20}{c}}
					{ - {b_{ij}} - 1}\\
					k
			\end{array}} \right]_{{q^{{d_i}}}}}{\left( {{q^{{d_i}}}} \right)^{\frac{{k(k - 2( - {b_{ij}} - 1))}}{2}}}{C_{ - {b_{ij}} - 1,k}}x_{n + i}^k\\
	&	=& {\left( {{q^{{d_i}}}} \right)^{\frac{{{k^2} + 2k}}{2}}}{C_{ - {b_{ij}} - 1,k}}x_{n + i}^k\left( {{{\left[ {\begin{array}{*{20}{c}}
						{ - {b_{ij}} - 1}\\
						k
				\end{array}} \right]}_{{q^{{d_i}}}}} + {{\left( {{q^{{d_i}}}} \right)}^{ - (1 + k)}}\left( {\sum\limits_{r = 0}^1 {{D_r}} } \right){{\left[ {\begin{array}{*{20}{c}}
						{ - {b_{ij}} - 1}\\
						k
				\end{array}} \right]}_{{q^{{d_i}}}}}} \right.\\
&& + {\left( {{q^{{d_i}}}} \right)^{ - 2(1 + k)}}\left( {\sum\limits_{r = 0}^2 {{D_r}} } \right)\left( {{{\left( {{q^{{d_i}}}} \right)}^k}{{\left[ {\begin{array}{*{20}{c}}
								{ - {b_{ij}} - 2}\\
								k
						\end{array}} \right]}_{{q^{{d_i}}}}} + {{\left[ {\begin{array}{*{20}{c}}
								{ - {b_{ij}} - 2}\\
								{k - 1}
						\end{array}} \right]}_{{q^{{d_i}}}}}} \right)\\
			&&	+  \cdots  + {\left( {{q^{{d_i}}}} \right)^{ - t(1 + k)}}\left( {\sum\limits_{r = 0}^t {{D_r}} } \right)\left( {{{\left( {{q^{{d_i}}}} \right)}^{(t - 1)k}}{{\left[ {\begin{array}{*{20}{c}}
											{ - {b_{ij}} - t}\\
											k
									\end{array}} \right]}_{{q^{{d_i}}}}}} \right.\\
								\end{eqnarray*}
								\begin{eqnarray*}
				&&		+ {\left( {{q^{{d_i}}}} \right)^{(t - 2)(k - 1)}}{\left[ {\begin{array}{*{20}{c}}
									{ - {b_{ij}} - t}\\
									{k - 1}
							\end{array}} \right]_{{q^{{d_i}}}}}{\left[ {\begin{array}{*{20}{c}}
									{t - 1}\\
									1
							\end{array}} \right]_{{q^{{d_i}}}}}\\
				&&		+  \cdots  + {\left( {{q^{{d_i}}}} \right)^{(t - 1 - s)(k - s)}}{\left[ {\begin{array}{*{20}{c}}
									{ - {b_{ij}} - t}\\
									{k - s}
							\end{array}} \right]_{{q^{{d_i}}}}}{\left[ {\begin{array}{*{20}{c}}
									{t - 1}\\
									s
							\end{array}} \right]_{{q^{{d_i}}}}}\\
				&&		\left. { +  \cdots  + {{\left( {{q^{{d_i}}}} \right)}^{(t - k)}}{{\left[ {\begin{array}{*{20}{c}}
											{ - {b_{ij}} - t}\\
											1
									\end{array}} \right]}_{{q^{{d_i}}}}}{{\left[ {\begin{array}{*{20}{c}}
											{t - 1}\\
											{k - 1}
									\end{array}} \right]}_{{q^{{d_i}}}}} + {{\left[ {\begin{array}{*{20}{c}}
											{t - 1}\\
											k
									\end{array}} \right]}_{{q^{{d_i}}}}}} \right)\\
					&&	\left. { +  \cdots  + {{\left( {{q^{{d_i}}}} \right)}^{{b_{ij}}(1 + k)}}\left( {\sum\limits_{r = 0}^{ - {b_{ij}}} {{D_r}} } \right){{\left[ {\begin{array}{*{20}{c}}
											{ - {b_{ij}} - 1}\\
											k
									\end{array}} \right]}_{{q^{{d_i}}}}}} \right)\\
		&\mathop  = \limits^{(\ref{(n-dk)})}&	{\left( {{q^{{d_i}}}} \right)^{\frac{{{k^2} + 2k}}{2}}}{C_{ - {b_{ij}} - 1,k}}x_{n + i}^k\left( {{{\left[ {\begin{array}{*{20}{c}}
								{ - {b_{ij}} - 1}\\
								k
						\end{array}} \right]}_{{q^{{d_i}}}}} + {{\left( {{q^{{d_i}}}} \right)}^{ - (1 + k)}}\left( {\sum\limits_{r = 0}^1 {{D_r}} } \right){{\left[ {\begin{array}{*{20}{c}}
								{ - {b_{ij}} - 1}\\
								k
						\end{array}} \right]}_{{q^{{d_i}}}}}} \right.\\
		&&	+ {\left( {{q^{{d_i}}}} \right)^{ - 2(1 + k)}}\left( {\sum\limits_{r = 0}^2 {{D_r}} } \right){\left[ {\begin{array}{*{20}{c}}
						{ - {b_{ij}} - 1}\\
						k
				\end{array}} \right]_{{q^{{d_i}}}}} +  \cdots  + {\left( {{q^{{d_i}}}} \right)^{ - t(1 + k)}}\left( {\sum\limits_{r = 0}^t {{D_r}} } \right){\left[ {\begin{array}{*{20}{c}}
						{ - {b_{ij}} - 1}\\
						k
				\end{array}} \right]_{{q^{{d_i}}}}}\\
		&&	\left. { +  \cdots  + {{\left( {{q^{{d_i}}}} \right)}^{{b_{ij}}(1 + k)}}\left( {\sum\limits_{r = 0}^{ - {b_{ij}}} {{D_r}} } \right){{\left[ {\begin{array}{*{20}{c}}
								{ - {b_{ij}} - 1}\\
								k
						\end{array}} \right]}_{{q^{{d_i}}}}}} \right)\\
		&	=& {\left( {{q^{{d_i}}}} \right)^{\frac{{{k^2} + 2k}}{2}}}{\left[ {\begin{array}{*{20}{c}}
						{ - {b_{ij}} - 1}\\
						k
				\end{array}} \right]_{{q^{{d_i}}}}}{C_{ - {b_{ij}} - 1,k}}x_{n + i}^k\sum\limits_{t = 0}^{ - {b_{ij}}} {{{\left( {{q^{{d_i}}}} \right)}^{ - t(1 + k)}}\sum\limits_{r = 0}^t {{D_r}} }
			\mathop  = \limits^{(\ref{(lemma3)})}0.
	\end{eqnarray*}

\item According to  (\ref{(ynxn)n}) and (\ref{(xnyn)n}), we have
\begin{eqnarray*}
&&	\sum\limits_{t = 0}^{{b_{ij}}} {{{\left( {{q^{{d_i}}}} \right)}^{t{b_{ij}}}}\left(\sum\limits_{r = 0}^t {{F_r}} \right)  y_i^{{b_{ij}} - t}x_i^{{b_{ij}} - 1}y_i^t} \\
&	= &y_i^{{b_{ij}}}x_i^{{b_{ij}} - 1} + \sum\limits_{t = 1}^{{b_{ij}}} {{{\left( {{q^{{d_i}}}} \right)}^{t{b_{ij}}}}\left(\sum\limits_{r = 0}^t {{F_r}}\right)    y_i^{{b_{ij}} - t}x_i^{{b_{ij}} - t}x_i^{t - 1}y_i^{t - 1}{y_i}} \\	
&	= &\sum\limits_{t = 1}^{{b_{ij}}} {\left( {{{\left( {{q^{{d_i}}}} \right)}^{t{b_{ij}}}}\left(\sum\limits_{r = 0}^t {{F_r}}  \right)  \sum\limits_{k = 0}^{{b_{ij}} - t} {\left( {{{\left[ {\begin{array}{*{20}{c}}
									{{b_{ij}} - t}\\
									k
							\end{array}} \right]}_{{q^{{d_i}}}}}{{\left( {{q^{{d_i}}}} \right)}^{\frac{{{k^2}}}{2}}}{C_{{b_{ij}} - t,k}}x_{n + i}^k} \right)} } \right.} \\
&&	\left. { \cdot \sum\limits_{k = 0}^{t - 1} {\left( {{{\left[ {\begin{array}{*{20}{c}}
								{t - 1}\\
								k
						\end{array}} \right]}_{{q^{{d_i}}}}}{{\left( {{q^{{d_i}}}} \right)}^{\frac{{k(k - 2(t - 1))}}{2}}}{C_{t - 1,k}}x_{n + i}^k} \right)} } \right){y_i}\\
					&& + {y_i}\sum\limits_{k = 0}^{{b_{ij}} - 1} {{{\left[ {\begin{array}{*{20}{c}}
						{{b_{ij}} - 1}\\
						k
				\end{array}} \right]}_{{q^{{d_i}}}}}{{\left( {{q^{{d_i}}}} \right)}^{\frac{{{k^2}}}{2}}}{C_{{b_{ij}} - 1,k}}x_{n + i}^k} \\
&	=& \sum\limits_{t = 1}^{{b_{ij}}} {\left( {{{\left( {{q^{{d_i}}}} \right)}^{t{b_{ij}}}}\left(\sum\limits_{r = 0}^t {{F_r}} \right) \sum\limits_{k = 0}^{{b_{ij}} - t} {\left( {{{\left[ {\begin{array}{*{20}{c}}
									{{b_{ij}} - t}\\
									k
							\end{array}} \right]}_{{q^{{d_i}}}}}{{\left( {{q^{{d_i}}}} \right)}^{\frac{{{k^2}}}{2}}}{C_{{b_{ij}} - t,k}}x_{n + i}^k} \right)} } \right.} \\
							\end{eqnarray*}
						\begin{eqnarray*}
&&	\left. { \cdot \sum\limits_{k = 0}^{t - 1} {\left( {{{\left[ {\begin{array}{*{20}{c}}
								{t - 1}\\
								k
						\end{array}} \right]}_{{q^{{d_i}}}}}{{\left( {{q^{{d_i}}}} \right)}^{\frac{{k(k - 2(t - 1))}}{2}}}{C_{t - 1,k}}x_{n + i}^k} \right)} } \right){y_i}\\
&&	+ \left( {\sum\limits_{k = 0}^{{b_{ij}} - 1} {{{\left[ {\begin{array}{*{20}{c}}
							{{b_{ij}} - 1}\\
							k
					\end{array}} \right]}_{{q^{{d_i}}}}}{{\left( {{q^{{d_i}}}} \right)}^{\frac{{{k^2} + 2k}}{2}}}{C_{{b_{ij}} - 1,k}}x_{n + i}^k} } \right){y_i},
\end{eqnarray*}
where ${C_{l,k}} = \prod\limits_{v \in [1,n]}^ \triangleleft  {x_v^{(l - k){{\left[ { - {b_{vi}}} \right]}_ + }}}  \cdot \prod\limits_{v \in [1,n]}^ \triangleleft  {x_v^{k{{\left[ {{b_{vi}}} \right]}_ + }}} $.\\

For any $k \in [0, {b_{ij}}-1]$, we compute
\begin{eqnarray*}
&&	{\left[ {\begin{array}{*{20}{c}}
				{{b_{ij}} - 1}\\
				k
		\end{array}} \right]_{{q^{{d_i}}}}}{\left( {{q^{{d_i}}}} \right)^{\frac{{{k^2} + 2k}}{2}}}{C_{{b_{ij}} - 1,k}}x_{n + i}^k\\
	&& + {\left( {{q^{{d_i}}}} \right)^{{b_{ij}}}}\left(\sum\limits_{r = 0}^1 {{F_r}}\right)    {\left[ {\begin{array}{*{20}{c}}
				{{b_{ij}} - 1}\\
				k
		\end{array}} \right]_{{q^{{d_i}}}}}{\left( {{q^{{d_i}}}} \right)^{\frac{{{k^2}}}{2}}}{C_{{b_{ij}} - 1,k}}x_{n + i}^k{\rm{ }}\\
&&	+ {\left( {{q^{{d_i}}}} \right)^{2{b_{ij}}}}\left( {\sum\limits_{r = 0}^2 {{F_r}} } \right)\left( {{{\left[ {\begin{array}{*{20}{c}}
						{{b_{ij}} - 2}\\
						k
				\end{array}} \right]}_{{q^{{d_i}}}}}{{\left[ {\begin{array}{*{20}{c}}
						1\\
						0
				\end{array}} \right]}_{{q^{{d_i}}}}}{{\left( {{q^{{d_i}}}} \right)}^{\frac{{{k^2}}}{2}}}{C_{{b_{ij}} - 2,k}}{C_{1,0}}x_{n + i}^k} \right.\\							
&&	\left. { + {{\left[ {\begin{array}{*{20}{c}}
						{{b_{ij}} - 2}\\
						{k - 1}
				\end{array}} \right]}_{{q^{{d_i}}}}}{{\left[ {\begin{array}{*{20}{c}}
						1\\
						1
				\end{array}} \right]}_{{q^{{d_i}}}}}{{\left( {{q^{{d_i}}}} \right)}^{\frac{{{{(k - 1)}^2} - 1}}{2}}}{C_{{b_{ij}} - 2,k - 1}}{C_{1,1}}x_{n + i}^k} \right)\\					
&&  +  \cdots  + {\left( {{q^{{d_i}}}} \right)^{t{b_{ij}}}}\left( {\sum\limits_{r = 0}^t {{F_r}} } \right)\left( {{{\left[ {\begin{array}{*{20}{c}}
					{{b_{ij}} - t}\\
					k
			\end{array}} \right]}_{{q^{{d_i}}}}}{{\left[ {\begin{array}{*{20}{c}}
					{t - 1}\\
					0
			\end{array}} \right]}_{{q^{{d_i}}}}}{{\left( {{q^{{d_i}}}} \right)}^{\frac{{{k^2}}}{2}}}{C_{{b_{ij}} - t,k}}{C_{t - 1,0}}x_{n + i}^k} \right.\\
&&	+ {\left[ {\begin{array}{*{20}{c}}
				{{b_{ij}} - t}\\
				{k - 1}
		\end{array}} \right]_{{q^{{d_i}}}}}{\left[ {\begin{array}{*{20}{c}}
				{t - 1}\\
				1
		\end{array}} \right]_{{q^{{d_i}}}}}{\left( {{q^{{d_i}}}} \right)^{\frac{{{{(k - 1)}^2} + (1 - 2(t - 1))}}{2}}}{C_{{b_{ij}} - t,k - 1}}{C_{t - 1,1}}x_{n + i}^k\\	
&&	+  \cdots  + {\left[ {\begin{array}{*{20}{c}}
				{{b_{ij}} - t}\\
				{k - s}
		\end{array}} \right]_{{q^{{d_i}}}}}{\left[ {\begin{array}{*{20}{c}}
				{t - 1}\\
				s
		\end{array}} \right]_{{q^{{d_i}}}}}{\left( {{q^{{d_i}}}} \right)^{\frac{{{{(k - s)}^2} + s(s - 2(t - 1))}}{2}}}{C_{{b_{ij}} - t,k - s}}{C_{t - 1,s}}x_{n + i}^k{\rm{ }}\\
&&	+  \cdots  + {\left[ {\begin{array}{*{20}{c}}
				{{b_{ij}} - t}\\
				1
		\end{array}} \right]_{{q^{{d_i}}}}}{\left[ {\begin{array}{*{20}{c}}
				{t - 1}\\
				{k - 1}
		\end{array}} \right]_{{q^{{d_i}}}}}{\left( {{q^{{d_i}}}} \right)^{\frac{{1 + (k - 1)(k - 1 - 2(t - 1))}}{2}}}{C_{{b_{ij}} - t,1}}{C_{t - 1,k - 1}}x_{n + i}^k{\rm{ }}\\
&& \left. { + {{\left[ {\begin{array}{*{20}{c}}
					{{b_{ij}} - t}\\
					0
			\end{array}} \right]}_{{q^{{d_i}}}}}{{\left[ {\begin{array}{*{20}{c}}
					{t - 1}\\
					k
			\end{array}} \right]}_{{q^{{d_i}}}}}{{\left( {{q^{{d_i}}}} \right)}^{\frac{{k(k - 2(t - 1))}}{2}}}{C_{{b_{ij}} - t,0}}{C_{t - 1,k}}x_{n + i}^k} \right)\\
&&	+  \cdots  + {\left( {{q^{{d_i}}}} \right)^{b_{ij}^2}}\left(\sum\limits_{r = 0}^{{b_{ij}}} {{F_r}} \right)  {\left[ {\begin{array}{*{20}{c}}
				{{b_{ij}} - 1}\\
				k
		\end{array}} \right]_{{q^{{d_i}}}}}{\left( {{q^{{d_i}}}} \right)^{\frac{{k(k - 2(  {b_{ij}} - 1))}}{2}}}{C_{{b_{ij}} - 1,k}}x_{n + i}^k\\
&	=&	{\left( {{q^{{d_i}}}} \right)^{\frac{{{k^2} + 2k}}{2}}}{C_{{b_{ij}} - 1,k}}x_{n + i}^k\left( {{{\left[ {\begin{array}{*{20}{c}}
						{{b_{ij}} - 1}\\
						k
				\end{array}} \right]}_{{q^{{d_i}}}}} + {{\left( {{q^{{d_i}}}} \right)}^{{b_{ij}} - k}}\left( {\sum\limits_{r = 0}^1 {{F_r}} } \right){{\left[ {\begin{array}{*{20}{c}}
						{{b_{ij}} - 1}\\
						k
				\end{array}} \right]}_{{q^{{d_i}}}}}} \right.\\
&&	+ {\left( {{q^{{d_i}}}} \right)^{2({b_{ij}} - k)}}\left( {\sum\limits_{r = 0}^2 {{F_r}} } \right)\left( {{{\left( {{q^{{d_i}}}} \right)}^k}{{\left[ {\begin{array}{*{20}{c}}
						{{b_{ij}} - 2}\\
						k
				\end{array}} \right]}_{{q^{{d_i}}}}} + {{\left[ {\begin{array}{*{20}{c}}
						{{b_{ij}} - 2}\\
						{k - 1}
				\end{array}} \right]}_{{q^{{d_i}}}}}} \right)  \\
				\end{eqnarray*}
			\begin{eqnarray*}
&&+  \cdots	+ {\left( {{q^{{d_i}}}} \right)^{t({b_{ij}} - k)}}\left( {\sum\limits_{r = 0}^t {{F_r}} } \right)\left( {{{\left( {{q^{{d_i}}}} \right)}^{(t - 1)k}}{{\left[ {\begin{array}{*{20}{c}}
						{{b_{ij}} - t}\\
						k
				\end{array}} \right]}_{{q^{{d_i}}}}} } \right.\\
	&&{ + {{\left( {{q^{{d_i}}}} \right)}^{(t - 2)(k - 1)}}{{\left[ {\begin{array}{*{20}{c}}
								{{b_{ij}} - t}\\
								{k - 1}
						\end{array}} \right]}_{{q^{{d_i}}}}}{{\left[ {\begin{array}{*{20}{c}}
								{t - 1}\\
								1
						\end{array}} \right]}_{{q^{{d_i}}}}}}\\
&&	+  \cdots  + {\left( {{q^{{d_i}}}} \right)^{(t - 1 - s)(k - s)}}{\left[ {\begin{array}{*{20}{c}}
				{{b_{ij}} - t}\\
				{k - s}
		\end{array}} \right]_{{q^{{d_i}}}}}{\left[ {\begin{array}{*{20}{c}}
				{t - 1}\\
				s
		\end{array}} \right]_{{q^{{d_i}}}}}\\
&&	\left. { +  \cdots  + {{\left( {{q^{{d_i}}}} \right)}^{t - k}}{{\left[ {\begin{array}{*{20}{c}}
						{{b_{ij}} - t}\\
						1
				\end{array}} \right]}_{{q^{{d_i}}}}}{{\left[ {\begin{array}{*{20}{c}}
						{t - 1}\\
						{k - 1}
				\end{array}} \right]}_{{q^{{d_i}}}}} + {{\left[ {\begin{array}{*{20}{c}}
						{t - 1}\\
						k
				\end{array}} \right]}_{{q^{{d_i}}}}}} \right)\\
&&	\left. { +  \cdots  + {{\left( {{q^{{d_i}}}} \right)}^{{b_{ij}}({b_{ij}} - k)}}\left( {\sum\limits_{r = 0}^{{b_{ij}}} {{F_r}} } \right){{\left[ {\begin{array}{*{20}{c}}
						{{b_{ij}} - 1}\\
						k
				\end{array}} \right]}_{{q^{{d_i}}}}}} \right)\\
	&\mathop  = \limits^{(\ref{(n-dk)})}&	{\left( {{q^{{d_i}}}} \right)^{\frac{{{k^2} + 2k}}{2}}}{C_{{b_{ij}} - 1,k}}x_{n + i}^k\left( {{{\left[ {\begin{array}{*{20}{c}}
							{{b_{ij}} - 1}\\
							k
					\end{array}} \right]}_{{q^{{d_i}}}}} + {{\left( {{q^{{d_i}}}} \right)}^{{b_{ij}} - k}}\left( {\sum\limits_{r = 0}^1 {{F_r}} } \right){{\left[ {\begin{array}{*{20}{c}}
							{{b_{ij}} - 1}\\
							k
					\end{array}} \right]}_{{q^{{d_i}}}}}} \right.\\
	&&	+ {\left( {{q^{{d_i}}}} \right)^{2({b_{ij}} - k)}}\left( {\sum\limits_{r = 0}^2 {{F_r}} } \right){\left[ {\begin{array}{*{20}{c}}
					{{b_{ij}} - 1}\\
					k
			\end{array}} \right]_{{q^{{d_i}}}}} +  \cdots  + {\left( {{q^{{d_i}}}} \right)^{t({b_{ij}} - k)}}\left( {\sum\limits_{r = 0}^t {{F_r}} } \right){\left[ {\begin{array}{*{20}{c}}
					{{b_{ij}} - 1}\\
					k
			\end{array}} \right]_{{q^{{d_i}}}}}\\
	&&	\left. { +  \cdots  + {{\left( {{q^{{d_i}}}} \right)}^{b_{ij}^{}({b_{ij}} - k)}}\left( {\sum\limits_{r = 0}^{{b_{ij}}} {{F_r}} } \right){{\left[ {\begin{array}{*{20}{c}}
							{{b_{ij}} - 1}\\
							k
					\end{array}} \right]}_{{q^{{d_i}}}}}} \right)\\
	&	=&{\left( {{q^{{d_i}}}} \right)^{\frac{{{k^2} + 2k}}{2}}}{\left[ {\begin{array}{*{20}{c}}
					{{b_{ij}} - 1}\\
					k
			\end{array}} \right]_{{q^{{d_i}}}}}{C_{{b_{ij}} - 1,k}}x_{n + i}^k\sum\limits_{t = 0}^{{b_{ij}}} {{{\left( {{q^{{d_i}}}} \right)}^{  t({b_{ij}} - k)}}\sum\limits_{r = 0}^t {{F_r}} }
	\mathop  = \limits^{(\ref{(lemma31)})}0.
\end{eqnarray*}
\end{enumerate}
	 This completes the proof.		
\end{proof}

\begin{theorem}\label{serry}
For any  $i\neq j \in [1,n]$,
\begin{enumerate}\itemsep=0pt
\item when $b_{ij}\le 0$, we have
\[\sum\limits_{r = 0}^{-{b_{ij}} + 1} {{{( - 1)}^r}{{\left( {{q^{{d_i}}}} \right)}^{\frac{{r(r - 1)}}{2}}}{{\left[ {\begin{array}{*{20}{c}}
					{-{b_{ij}}  + 1}\\
					r
			\end{array}} \right]}_{{q^{{d_i}}}}}y_i^{-{b_{ij}}  + 1 - r}{y_j}y_i^r}  = 0;\]
		
\item when $b_{ij}> 0$, we have	
		
		\[\sum\limits_{r = 0}^{{b_{ij}} + 1} {{{( - 1)}^r}{{\left( {{q^{{d_i}}}} \right)}^{\frac{{r(r - 1)}}{2}-rb_{ij}}}{{\left[ {\begin{array}{*{20}{c}}
							{{b_{ij}} + 1}\\
							r
					\end{array}} \right]}_{{q^{{d_i}}}}}y_i^{{b_{ij}} + 1 - r}{y_j}y_i^r}  = 0.\]
\end{enumerate}
\item					
					
\end{theorem}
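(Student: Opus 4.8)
The plan is to deduce both identities from Lemma~\ref{lemma2} by a discrete summation by parts, the bridge being an explicit evaluation of the commutator $y_iy_j-y_jy_i$. Write $p=q^{d_i}$, let $D_r,F_r$ be the coefficients of Lemma~\ref{lemma2} (these are exactly the coefficients appearing in the theorem for $b_{ij}\le 0$ and $b_{ij}>0$ respectively), and put $E_r=\sum_{s=0}^r D_s$, $\widetilde E_r=\sum_{s=0}^r F_s$. Treat first $b_{ij}<0$: set $N=-b_{ij}+1$, $A_r=y_i^{N-r}y_jy_i^r$, so the left side of the theorem is $\sum_{r=0}^N D_rA_r$. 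Since $D_r=E_r-E_{r-1}$ (with $E_{-1}=0$) and $E_N=\sum_{r=0}^N D_r=0$ by \eqref{(q=0)} (applied with base $q^{d_i}$), Abel summation collapses this to $\sum_{r=0}^{N-1}E_r(A_r-A_{r+1})$, and $A_r-A_{r+1}=y_i^{N-1-r}(y_iy_j-y_jy_i)y_i^r$. As $N-1=-b_{ij}$, the theorem reduces to showing $\sum_{r=0}^{-b_{ij}}E_r\,y_i^{-b_{ij}-r}(y_iy_j-y_jy_i)\,y_i^r=0$, where note $E_r=\sum_{s\le r}D_s$ is precisely the coefficient in \eqref{(lemma2)}. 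The case $b_{ij}>0$ is the same with $N$ replaced by $b_{ij}+1$, $E_r$ by $\widetilde E_r$, and Corollary~\ref{-cr} (with $c=b_{ij}$) in place of \eqref{(q=0)}; the case $b_{ij}=0$ will fall out of the next step.

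Next I would compute the commutator. By the exchange relation \eqref{(er)}, $y_k=X^{-\e_k+[\b_k]_+}+X^{-\e_k+[-\b_k]_+}$, and \eqref{(ef)} gives $X^{\alpha}X^{\beta}=q^{\Lambda(\alpha,\beta)}X^{\beta}X^{\alpha}$, so
\[
y_iy_j-y_jy_i=\sum_{\substack{\mu\in\{-\e_i+[\b_i]_+,\ -\e_i+[-\b_i]_+\}\\[1pt] \nu\in\{-\e_j+[\b_j]_+,\ -\e_j+[-\b_j]_+\}}}\bigl(q^{\Lambda(\mu,\nu)}-1\bigr)\,X^{\nu}X^{\mu}.
\]
The compatibility relation \eqref{(dj)} reads $\Lambda(\b_k,\e_l)=\delta_{kl}d_k$, hence $\Lambda(\b_i,\nu)=\nu_id_i$ and $\Lambda(\mu,\b_j)=-\mu_jd_j$; together with the block form $\Lambda=\left(\begin{smallmatrix}0&-D\\ D&-DB\end{smallmatrix}\right)$ (in particular $\lambda_{kl}=0$ for $k,l\in[1,n]$, which forces $\Lambda(-\e_i+[-\b_i]_+,\,-\e_j+[-\b_j]_+)=0$) and the relation $d_jb_{ji}=-d_ib_{ij}$, all four exponents are determined. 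Three of them vanish and exactly one equals $-d_ib_{ij}$; thus $y_iy_j=y_jy_i$ when $b_{ij}=0$, while for $b_{ij}\neq 0$
\[
y_iy_j-y_jy_i=\bigl(q^{-d_ib_{ij}}-1\bigr)X^{\nu_0}X^{\mu_0},
\]
with $(\mu_0,\nu_0)=(-\e_i+[\b_i]_+,\,-\e_j+[-\b_j]_+)$ if $b_{ij}<0$ and $(\mu_0,\nu_0)=(-\e_i+[-\b_i]_+,\,-\e_j+[\b_j]_+)$ if $b_{ij}>0$. The decisive feature is that this single surviving monomial has $x_i$-degree exactly $-b_{ij}-1$ (resp.\ $b_{ij}-1$), i.e.\ precisely the power of $x_i$ sandwiched in \eqref{(lemma2)} (resp.\ \eqref{(lemma21)}).

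Finally I would match this with Lemma~\ref{lemma2}. Take $b_{ij}<0$ (the other case is identical). Using \eqref{(ef)}, write $X^{\nu_0}X^{\mu_0}=q^{c}X^{\rho'}x_i^{-b_{ij}-1}$ where $\rho'$ has $i$-th coordinate $0$. Since $\rho'_i=0$, $\Lambda(\b_i,\rho')=0$, so both monomials of $y_i$ quasi-commute with $X^{\rho'}$ by the same power $q^{w}$, and a direct computation with the block form of $\Lambda$ (using $\Lambda(-\e_i+[-\b_i]_+,\b_i)=d_i$ and $\Lambda(-\e_i+[-\b_i]_+,\e_i)=0$) gives $w=d_i$. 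Substituting into the collapsed sum from Step~1 and pulling $X^{\rho'}$ to the left in each term produces the $r$-weight $q^{w(-b_{ij}-r)}=q^{-d_ib_{ij}}p^{-r}$, so that sum equals $(q^{-d_ib_{ij}}-1)q^{c-d_ib_{ij}}X^{\rho'}\sum_{r=0}^{-b_{ij}}p^{-r}E_r\,y_i^{-b_{ij}-r}x_i^{-b_{ij}-1}y_i^r$, whose final factor is exactly the left-hand side of \eqref{(lemma2)}, hence $0$. For $b_{ij}>0$ the analogous computation gives $w=-d_ib_{ij}$, producing the $r$-weight $p^{rb_{ij}}$ and the left-hand side of \eqref{(lemma21)}.

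The substantive part—and the main obstacle—is the commutator computation: correctly evaluating all four quasi-commutation exponents $\Lambda(\mu,\nu)$ from \eqref{(dj)} and the block shape of $\Lambda$, and checking that exactly three cancel so that $y_iy_j-y_jy_i$ collapses to a single monomial whose $x_i$-degree is shifted by one from $|b_{ij}|$ to meet Lemma~\ref{lemma2} on the nose. The residual $q$-power bookkeeping in the last step—verifying that the weight left after extracting $X^{\rho'}$ is precisely $p^{-r}$ (resp.\ $p^{rb_{ij}}$)—is delicate but purely mechanical once the block form of $\Lambda$ is used; the summation by parts itself is routine.
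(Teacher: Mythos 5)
Your proposal is correct and follows essentially the same route as the paper: reduce the Serre sum via partial sums $E_r=\sum_{s\le r}D_s$ (the paper obtains the same regrouping by iterating the relation $y_i^sy_j=y_jy_i^s+\sum_t A\,y_i^{s-1-t}My_i^t$ rather than by Abel summation), observe that $y_iy_j-y_jy_i$ collapses to a single monomial of $x_i$-degree $-b_{ij}-1$ (resp.\ $b_{ij}-1$), kill the leading term with \eqref{(q=0)} (resp.\ Corollary~\ref{-cr}), and kill the rest with Lemma~\ref{lemma2} after tracking the quasi-commutation weight $q^{\mp d_i t}$. The Abel-summation packaging and the $X^{\mathbf e}$-monomial bookkeeping are only presentational differences.
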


\begin{proof}	

\begin{enumerate}\itemsep=0pt
\item By using the exchange relations, we have
	\begin{eqnarray*}
		{y_i} &=& {q^{ - \frac{{{d_i}}}{2}}}x_i^{ - 1}x_j^{{b_{ji}}}\prod\limits_{k \in [1,n],k \ne j}^ \triangleleft  {x_k^{{{\left[ {{b_{ki}}} \right]}_ + }}}  \cdot {x_{n + i}} + \prod\limits_{k \in [1,n],k \ne j}^ \triangleleft  {x_k^{{{\left[ { - {b_{ki}}} \right]}_ + }}}  \cdot x_i^{ - 1},\\
{y_j} &= &{q^{ - \frac{{{d_j}}}{2}}}x_j^{ - 1} {\prod\limits_{k \in [1,n],k \ne i}^ \triangleleft  {x_k^{{{\left[ {{b_{kj}}} \right]}_ + }}} } \cdot{x_{n + j}} +  {\prod\limits_{k \in [1,n],k \ne i}^ \triangleleft  {x_k^{{{\left[ { - {b_{kj}}} \right]}_ + }}} } \cdot x_i^{ - {b_{ij}}}x_j^{ - 1}.
	\end{eqnarray*}	
	
Thus, we get	
	\begin{eqnarray*}
		{y_i}{y_j} &= &\left( {{q^{ - \frac{{{d_i}}}{2}}}x_i^{ - 1}x_j^{{b_{ji}}} {\prod\limits_{k \in [1,n],k \ne j}^ \triangleleft  {x_k^{{{\left[ {{b_{ki}}} \right]}_ + }}} } \cdot {x_{n + i}} +  {\prod\limits_{k \in [1,n],k \ne j}^ \triangleleft  {x_k^{{{\left[ { - {b_{ki}}} \right]}_ + }}} }   \cdot x_i^{ - 1}} \right)\\
	&&	\cdot \left( {{q^{ - \frac{{{d_j}}}{2}}}x_j^{ - 1}{\prod\limits_{k \in [1,n],k \ne i}^ \triangleleft  {x_k^{{{\left[ {{b_{kj}}} \right]}_ + }}} } \cdot {x_{n + j}} +  {\prod\limits_{k \in [1,n],k \ne i}^ \triangleleft  {x_k^{{{\left[ { - {b_{kj}}} \right]}_ + }}} } \cdot x_i^{ - {b_{ij}}}x_j^{ - 1}} \right)\\
	&	=& {q^{ - \frac{{{d_i}{d_j}}}{2}}}x_i^{ - 1}x_j^{{b_{ji}}} {\prod\limits_{k \in [1,n],k \ne j}^ \triangleleft  {x_k^{{{\left[ {{b_{ki}}} \right]}_ + }}} } \cdot{x_{n + i}}x_j^{ - 1} {\prod\limits_{k \in [1,n],k \ne i}^ \triangleleft  {x_k^{{{\left[ {{b_{kj}}} \right]}_ + }}} } \cdot {x_{n + j}}\\
	&&	+ {q^{ - \frac{{{d_i}}}{2}}}x_i^{ - 1}x_j^{{b_{ji}}} {\prod\limits_{k \in [1,n],k \ne j}^ \triangleleft  {x_k^{{{\left[ {{b_{ki}}} \right]}_ + }}} } \cdot {x_{n + i}} {\prod\limits_{k \in [1,n],k \ne i}^ \triangleleft  {x_k^{{{\left[ { - {b_{kj}}} \right]}_ + }}} } \cdot x_i^{ - {b_{ij}}}x_j^{ - 1}\\
	&&	+ {q^{ - \frac{{{d_j}}}{2}}}{\prod\limits_{k \in [1,n],k \ne j}^ \triangleleft  {x_k^{{{\left[ { - {b_{ki}}} \right]}_ + }}} } \cdot x_i^{ - 1}x_j^{ - 1}{\prod\limits_{k \in [1,n],k \ne i}^ \triangleleft  {x_k^{{{\left[ {{b_{kj}}} \right]}_ + }}} } \cdot{x_{n + j}}\\
	&&	+  {\prod\limits_{k \in [1,n],k \ne j}^ \triangleleft  {x_k^{{{\left[ { - {b_{ki}}} \right]}_ + }}} } \cdot x_i^{ - 1} {\prod\limits_{k \in [1,n],k \ne i}^ \triangleleft  {x_k^{{{\left[ { - {b_{kj}}} \right]}_ + }}} } \cdot x_i^{ - {b_{ij}}}x_j^{ - 1}\\
	&	=& {q^{ - \frac{{{d_i} + {d_j}}}{2}}}x_i^{ - 1}x_j^{{b_{ji}} - 1} {\prod\limits_{k \in [1,n],k \ne j}^ \triangleleft  {x_k^{{{\left[ {{b_{ki}}} \right]}_ + }}} } \cdot {\prod\limits_{k \in [1,n],k \ne i}^ \triangleleft  {x_k^{{{\left[ {{b_{kj}}} \right]}_ + }}} } \cdot{x_{n + i}}{x_{n + j}}\\
	&&	+ {q^{ - \frac{{{d_i}}}{2} - {d_i}{b_{ij}}}}x_i^{ - 1 - {b_{ij}}}x_j^{{b_{ji}} - 1} {\prod\limits_{k \in [1,n],k \ne j}^ \triangleleft  {x_k^{{{\left[ {{b_{ki}}} \right]}_ + }}} } \cdot {\prod\limits_{k \in [1,n],k \ne i}^ \triangleleft  {x_k^{{{\left[ { - {b_{kj}}} \right]}_ + }}} } \cdot{x_{n + i}}\\	
	&&	+ {q^{ - \frac{{{d_j}}}{2}}}x_i^{ - 1}x_j^{ - 1} {\prod\limits_{k \in [1,n],k \ne j}^ \triangleleft  {x_k^{{{\left[ { - {b_{ki}}} \right]}_ + }}} } \cdot {\prod\limits_{k \in [1,n],k \ne i}^ \triangleleft  {x_k^{{{\left[ {{b_{kj}}} \right]}_ + }}} } \cdot{x_{n + j}}\\
	&&	+ x_i^{ - 1 - {b_{ij}}}x_j^{ - 1} {\prod\limits_{k \in [1,n],k \ne j}^ \triangleleft  {x_k^{{{\left[ { - {b_{ki}}} \right]}_ + }}} } \cdot {\prod\limits_{k \in [1,n],k \ne i}^ \triangleleft  {x_k^{{{\left[ { - {b_{kj}}} \right]}_ + }}} }
	\end{eqnarray*}		
and	
\begin{eqnarray*}
		{y_j}{y_i} &=& {q^{ - \frac{{{d_j} + {d_i}}}{2}}}x_i^{ - 1}x_j^{{b_{ji}} - 1}  {\prod\limits_{k \in [1,n],k \ne j}^ \triangleleft  {x_k^{{{\left[ {{b_{ki}}} \right]}_ + }}} } \cdot {\prod\limits_{k \in [1,n],k \ne i}^ \triangleleft  {x_k^{{{\left[ {{b_{kj}}} \right]}_ + }}} } \cdot{x_{n + i}}{x_{n + j}}\\
	&&	+ {q^{ - \frac{{{d_i}}}{2}}}x_i^{ - 1 - {b_{ij}}}x_j^{{b_{ji}} - 1}  {\prod\limits_{k \in [1,n],k \ne j}^ \triangleleft  {x_k^{{{\left[ {{b_{ki}}} \right]}_ + }}} } \cdot{\prod\limits_{k \in [1,n],k \ne i}^ \triangleleft  {x_k^{{{\left[ { - {b_{kj}}} \right]}_ + }}} } \cdot{x_{n + i}}\\
		&&+ {q^{ - \frac{{{d_j}}}{2}}}x_i^{ - 1}x_j^{ - 1}  {\prod\limits_{k \in [1,n],k \ne j}^ \triangleleft  {x_k^{{{\left[ { - {b_{ki}}} \right]}_ + }}} }\cdot {\prod\limits_{k \in [1,n],k \ne i}^ \triangleleft  {x_k^{{{\left[ {{b_{kj}}} \right]}_ + }}} } \cdot{x_{n + j}}\\
		&&+ x_i^{ - 1 - {b_{ij}}}x_j^{ - 1} {\prod\limits_{k \in [1,n],k \ne j}^ \triangleleft  {x_k^{{{\left[ { - {b_{ki}}} \right]}_ + }}} } \cdot {\prod\limits_{k \in [1,n],k \ne i}^ \triangleleft  {x_k^{{{\left[ { - {b_{kj}}} \right]}_ + }}} }  .
	\end{eqnarray*}	
	
So, we obtain
\begin{eqnarray*}
{y_i}{y_j} - {y_j}{y_i} = \left( {{q^{ - \frac{{{d_i}}}{2} - {d_i}{b_{ij}}}} - {q^{ - \frac{{{d_i}}}{2}}}} \right)x_i^{ - {b_{ij}- 1 }}x_j^{{b_{ji}} - 1} {\prod\limits_{k \in [1,n],k \ne j}^ \triangleleft  {x_k^{{{\left[ {{b_{ki}}} \right]}_ + }}} } \cdot {\prod\limits_{k \in [1,n],k \ne i}^ \triangleleft  {x_k^{{{\left[ { - {b_{kj}}} \right]}_ + }}} } \cdot{x_{n + i}}.
\end{eqnarray*}	

(i) When $b_{ij}=0$,  we have
\[\sum\limits_{r = 0}^1 {{{( - 1)}^r}{{\left( {{q^{{d_i}}}} \right)}^{\frac{{r(r - 1)}}{2}}}{{\left[ {\begin{array}{*{20}{c}}
 					1\\
 					r
 			\end{array}} \right]}_{{q^{{d_i}}}}}y_i^{1 - r}{y_j}y_i^r}   ={y_i}{y_j}-  {y_j}{y_i}   = 0.\]

(ii) When $b_{ij}\ne 0$, we rewrite the above equation as follows
	\begin{eqnarray}\label{(y1y2)n}
{y_i}{y_j} = {y_j}{y_i} + Ax_i^{- {b_{ij}} - 1}x_j^{{b_{ji}} - 1}B{x_{n + i}} ,
	\end{eqnarray}
	where $A = {{q^{ - \frac{{{d_i}}}{2} - {d_i}{b_{ij}}}} - {q^{ - \frac{{{d_i}}}{2}}}}$  and $B =   {\prod\limits_{k \in [1,n],k \ne j}^ \triangleleft  {x_k^{{{\left[ { - {b_{ki}}} \right]}_ + }}} } \cdot {\prod\limits_{k \in [1,n],k \ne i}^ \triangleleft  {x_k^{{{\left[ {{b_{kj}}} \right]}_ + }}} }  $.

Thus, for any $s\geq 1$, we have
\begin{eqnarray}\label{(yinyj)}
	y_i^s{y_j}&\mathop  = \limits^{(\ref{(y1y2)n})}& y_i^{s - 1}\left( { {y_j}{y_i} + Ax_i^{- {b_{ij}} - 1}x_j^{{b_{ji}} - 1}B{x_{n + i}} } \right)\nonumber\\
	&=& y_i^{s - 1}{y_j}{y_i} + Ay_i^{s - 1}x_i^{- {b_{ij}} - 1}x_j^{{b_{ji}} - 1}B{x_{n + i}} \nonumber\\
	&\mathop  = \limits^{(\ref{(y1y2)n})} &y_i^{s- 2}{y_j}y_i^2 + {A}y_i^{s- 2}x_i^{- {b_{ij}} - 1}x_j^{{b_{ji}} - 1}B{x_{n + i}}{y_i} 	+ Ay_i^{s - 1}x_i^{- {b_{ij}} - 1}x_j^{{b_{ji}} - 1}B{x_{n + i}}\nonumber\\
	&\mathop  = \limits^{(\ref{(y1y2)n})}& {y_j}y_i^s + \sum\limits_{t = 0}^{s- 1} { {Ay_i^{s - 1 - t}x_i^{- {b_{ij}} - 1}x_j^{{b_{ji}} - 1}B{x_{n + i}}y_i^t} }.
	\end{eqnarray}

	Now, we calculate
	\begin{eqnarray*}
&&\sum\limits_{r = 0}^{-{b_{ij}} + 1} {{{( - 1)}^r}{{\left( {{q^{{d_i}}}} \right)}^{\frac{{r(r - 1)}}{2}}}{{\left[ {\begin{array}{*{20}{c}}
					{-{b_{ij}} + 1}\\
					r
			\end{array}} \right]}_{{q^{{d_i}}}}}y_i^{-{b_{ij}} + 1 - r}{y_j}y_i^r}   = \sum\limits_{r = 0}^{-{b_{ij}} + 1} {{D_r}y_i^{-{b_{ij}} + 1 - r}{y_j}y_i^r} \\
&	=& {D_0}y_i^{-{b_{ij}} + 1}{y_j} +  \cdots  + {D_r}y_i^{-{b_{ij}} + 1 - r}{y_j}y_i^r +  \cdots  + {D_{-{b_{ij}}}}y_i^{}{y_j}y_i^{-{b_{ij}}} + {D_{-{b_{ij}} + 1}}{y_j}y_i^{-{b_{ij}} + 1}\\
	&\mathop  = \limits^{(\ref{(yinyj)})}& {D_0}{y_j}y_i^{-{b_{ij}} + 1} + {D_0}\sum\limits_{t = 0}^{-{b_{ij}}} {\left( {Ay_i^{-{b_{ij}} - t}x_i^{- {b_{ij}} - 1}x_j^{{b_{ji}} - 1}B{x_{n + i}}y_i^t} \right)}   \\
&&	+ {D_1}{y_j}y_i^{-{b_{ij}} + 1} + {D_1}\sum\limits_{t = 1}^{-{b_{ij}}} {\left( {Ay_i^{-{b_{ij}} - t}x_i^{- {b_{ij}} - 1}x_j^{{b_{ji}} - 1}B{x_{n + i}}y_i^t} \right)}   \\
	&&+  \cdots  + {D_r}{y_j}y_i^{-{b_{ij}} + 1} + {D_r}\sum\limits_{t = r}^{-{b_{ij}}} {\left( {Ay_i^{-{b_{ij}} - t}x_i^{- {b_{ij}} - 1}x_j^{{b_{ji}} - 1}B{x_{n + i}}y_i^t} \right)}  \\
	&&+  \cdots  + {D_{-{b_{ij}}}}{y_j}y_i^{-{b_{ij}} + 1} + {D_{-{b_{ij}}}}Ax_i^{- {b_{ij}} - 1}x_j^{{b_{ji}} - 1}B{x_{n + i}}y_i^{-{b_{ij}}}  + {D_{-{b_{ij}} + 1}}{y_j}y_i^{-{b_{ij}} + 1}\\
		\end{eqnarray*}
	\begin{eqnarray*}
&	=& \left( {\sum\limits_{r = 0}^{-{b_{ij}} + 1} {{D_r}} } \right){y_j}y_i^{-{b_{ij}} + 1} + {D_0}Ay_i^{-{b_{ij}}}x_i^{- {b_{ij}} - 1}x_j^{{b_{ji}} - 1}B{x_{n + i}}\\&& + \left( {\sum\limits_{r = 0}^1 {{D_r}} } \right)Ay_i^{-{b_{ij}} - 1}x_i^{- {b_{ij}} - 1}x_j^{{b_{ji}} - 1}B{x_{n + i}}y_i^{}\\
&&	+  \cdots  + \left( {\sum\limits_{r = 0}^t {{D_r}} } \right)Ay_i^{-{b_{ij}} - t}x_i^{- {b_{ij}} - 1}x_j^{{b_{ji}} - 1}B{x_{n + i}}y_i^t \\&&+  \cdots  + \left( {\sum\limits_{r = 0}^{-{b_{ij}}} {{D_r}} } \right)Ax_i^{- {b_{ij}} - 1}x_j^{{b_{ji}} - 1}B{x_{n + i}}y_i^{-{b_{ij}}}\\
&	=& \left( {\sum\limits_{r = 0}^{-{b_{ij}} + 1} {{D_r}} } \right){y_j}y_i^{-{b_{ij}} + 1} + \sum\limits_{t = 0}^{-{b_{ij}}} {\left( {\left( {\sum\limits_{r = 0}^t {{D_r}} } \right)Ay_i^{-{b_{ij}} - t}x_i^{- {b_{ij}} - 1}x_j^{{b_{ji}} - 1}B{x_{n + i}}y_i^t} \right)}  ,
				\end{eqnarray*}
	where ${D_r} = {( - 1)^r}{\left( {{q^{{d_i}}}} \right)^{\frac{{r(r - 1)}}{2}}}{\left[ {\begin{array}{*{20}{c}}
				{-{b_{ij}} + 1}\\
				r
		\end{array}} \right]_{{q^{{d_i}}}}}$.	
	
Note that
\[\sum\limits_{r = 0}^{-{b_{ij}} + 1} {{D_r}}  = \sum\limits_{r = 0}^{-{b_{ij}} + 1} {{{( - 1)}^r}{{\left( {{q^{{d_i}}}} \right)}^{\frac{{r(r - 1)}}{2}}}{{\left[ {\begin{array}{*{20}{c}}
					{-{b_{ij}} + 1}\\
					r
			\end{array}} \right]}_{{q^{{d_i}}}}}}  \mathop  = \limits^{(\ref{(q=0)})} 0\]
and
	\begin{eqnarray*}
&&\sum\limits_{t = 0}^{-{b_{ij}}} {\left( {\left( {\sum\limits_{r = 0}^t {{D_r}} } \right)Ay_i^{-{b_{ij}} - t}x_i^{- {b_{ij}} - 1}x_j^{{b_{ji}} - 1}B{x_{n + i}}y_i^t} \right)} \\& = &A\left( {\sum\limits_{t = 0}^{ - {b_{ij}}} {{{\left( {{q^{{d_i}}}} \right)}^{ - t}}\left( {\sum\limits_{r = 0}^t {{D_r}} } \right)y_i^{ - {b_{ij}} - t}x_i^{ - {b_{ij}} - 1}y_i^t} } \right)x_j^{{b_{ji}} - 1}B{x_{n + i}}
	\mathop  = \limits^{(\ref{(lemma2)})}0.
	\end{eqnarray*}

\item By using the exchange relations, we have
\begin{eqnarray*}
	{y_i}& =& {q^{ - \frac{{{d_i}}}{2}}}x_i^{ - 1}\prod\limits_{k \in [1,n],k \ne j}^ \triangleleft  {x_k^{{{\left[ {{b_{ki}}} \right]}_ + }}}  \cdot {x_{n + i}} + x_i^{ - 1}x_j^{-{b_{ji}}}\prod\limits_{k \in [1,n],k \ne j}^ \triangleleft  {x_k^{{{\left[ { - {b_{ki}}} \right]}_ + }}},\\
{y_j} &=& {q^{ - \frac{{{d_j}}}{2}}}x_j^{ - 1}x_i^{{b_{ij}}}\prod\limits_{k \in [1,n],k \ne i}^ \triangleleft  {x_k^{{{\left[ {{b_{kj}}} \right]}_ + }}}  \cdot {x_{n + j}} + x_j^{ - 1}\prod\limits_{k \in [1,n],k \ne i}^ \triangleleft  {x_k^{{{\left[ { - {b_{kj}}} \right]}_ + }}}.
\end{eqnarray*}

Thus, we have	
\begin{eqnarray*}
	{y_i}{y_j}& =& {q^{ - \frac{{{d_i} + {d_j}}}{2}}}x_i^{ - 1}\prod\limits_{k \in [1,n],k \ne j}^ \triangleleft  {x_k^{{{\left[ {{b_{ki}}} \right]}_ + }}}  \cdot {x_{n + i}}x_j^{ - 1}x_i^{{b_{ij}}}\prod\limits_{k \in [1,n],k \ne i}^ \triangleleft  {x_k^{{{\left[ {{b_{kj}}} \right]}_ + }}}  \cdot {x_{n + j}}\\
&&	+ {q^{ - \frac{{{d_j}}}{2}}}x_i^{ - 1}x_j^{-{b_{ji}}}\prod\limits_{k \in [1,n],k \ne j}^ \triangleleft  {x_k^{{{\left[ { - {b_{ki}}} \right]}_ + }}}  \cdot x_j^{ - 1}x_i^{{b_{ij}}}\prod\limits_{k \in [1,n],k \ne i}^ \triangleleft  {x_k^{{{\left[ {{b_{kj}}} \right]}_ + }}}  \cdot {x_{n + j}}\\
		\end{eqnarray*}
\begin{eqnarray*}
&&	+ {q^{ - \frac{{{d_i}}}{2}}}x_i^{ - 1}\prod\limits_{k \in [1,n],k \ne j}^ \triangleleft  {x_k^{{{\left[ {{b_{ki}}} \right]}_ + }}}  \cdot {x_{n + i}}x_j^{ - 1}\prod\limits_{k \in [1,n],k \ne i}^ \triangleleft  {x_k^{{{\left[ { - {b_{kj}}} \right]}_ + }}} \\
&&	+ x_i^{ - 1}x_j^{-{b_{ji}}}\prod\limits_{k \in [1,n],k \ne j}^ \triangleleft  {x_k^{{{\left[ { - {b_{ki}}} \right]}_ + }}}  \cdot x_j^{ - 1}\prod\limits_{k \in [1,n],k \ne i}^ \triangleleft  {x_k^{{{\left[ { - {b_{kj}}} \right]}_ + }}} \\
&	=& {q^{ - \frac{{{d_i} + {d_j}}}{2} + {d_i}{b_{ij}}}}x_i^{{b_{ij}} - 1}x_j^{ - 1}\prod\limits_{k \in [1,n],k \ne j}^ \triangleleft  {x_k^{{{\left[ {{b_{ki}}} \right]}_ + }}}  \cdot \prod\limits_{k \in [1,n],k \ne i}^ \triangleleft  {x_k^{{{\left[ {{b_{kj}}} \right]}_ + }}}  \cdot {x_{n + i}}{x_{n + j}}\\
&&	+ {q^{ - \frac{{{d_j}}}{2}}}x_i^{{b_{ij}} - 1}x_j^{-{b_{ji}} - 1}\prod\limits_{k \in [1,n],k \ne j}^ \triangleleft  {x_k^{{{\left[ { - {b_{ki}}} \right]}_ + }}}  \cdot \prod\limits_{k \in [1,n],k \ne i}^ \triangleleft  {x_k^{{{\left[ {{b_{kj}}} \right]}_ + }}}  \cdot {x_{n + j}}\\
&&	+ {q^{ - \frac{{{d_i}}}{2}}}x_i^{ - 1}x_j^{ - 1}\prod\limits_{k \in [1,n],k \ne j}^ \triangleleft  {x_k^{{{\left[ {{b_{ki}}} \right]}_ + }}}  \cdot \prod\limits_{k \in [1,n],k \ne i}^ \triangleleft  {x_k^{{{\left[ { - {b_{kj}}} \right]}_ + }}}  \cdot {x_{n + i}}\\
&&	+ x_i^{ - 1}x_j^{-{b_{ji}} - 1}\prod\limits_{k \in [1,n],k \ne j}^ \triangleleft  {x_k^{{{\left[ { - {b_{ki}}} \right]}_ + }}}  \cdot \prod\limits_{k \in [1,n],k \ne i}^ \triangleleft  {x_k^{{{\left[ { - {b_{kj}}} \right]}_ + }}}
\end{eqnarray*}		
and	
\begin{eqnarray*}
	{y_j}{y_i} &= &{q^{ - \frac{{{d_j} + {d_i}}}{2} + {d_i}{b_{ij}}}}x_i^{{b_{ij}} - 1}x_j^{ - 1}\prod\limits_{k \in [1,n],k \ne j}^ \triangleleft  {x_k^{{{\left[ {{b_{ki}}} \right]}_ + }}}  \cdot \prod\limits_{k \in [1,n],k \ne i}^ \triangleleft  {x_k^{{{\left[ {{b_{kj}}} \right]}_ + }}}  \cdot   {x_{n + i}}{x_{n + j}}\\
	&&+ {q^{ - \frac{{{d_i}}}{2}}}x_i^{ - 1}x_j^{ - 1}\prod\limits_{k \in [1,n],k \ne j}^ \triangleleft  {x_k^{{{\left[ {{b_{ki}}} \right]}_ + }}}  \cdot \prod\limits_{k \in [1,n],k \ne i}^ \triangleleft  {x_k^{{{\left[ { - {b_{kj}}} \right]}_ + }}}  \cdot {x_{n + i}}\\
	&&+ {q^{ - \frac{{{d_j}}}{2} - {d_j}{b_{ji}}}}x_i^{{b_{ij}} - 1}x_j^{-{b_{ji}} - 1}\prod\limits_{k \in [1,n],k \ne j}^ \triangleleft  {x_k^{{{\left[ { - {b_{ki}}} \right]}_ + }}}  \cdot \prod\limits_{k \in [1,n],k \ne i}^ \triangleleft  {x_k^{{{\left[ {{b_{kj}}} \right]}_ + }}}  \cdot {x_{n + j}}\\
&&	+ x_i^{ - 1}x_j^{-{b_{ji}} - 1}\prod\limits_{k \in [1,n],k \ne j}^ \triangleleft  {x_k^{{{\left[ { - {b_{ki}}} \right]}_ + }}}  \cdot \prod\limits_{k \in [1,n],k \ne i}^ \triangleleft  {x_k^{{{\left[ { - {b_{kj}}} \right]}_ + }}}  .
\end{eqnarray*}	

So, we obtain
\begin{eqnarray*}
{y_i}{y_j}-{y_j}{y_i}  =  \left( {{q^{ - \frac{{{d_j}}}{2}}} - {q^{ - \frac{{{d_j}}}{2} - {d_j}{b_{ji}}}}} \right)x_i^{{b_{ij}} - 1}x_j^{-{b_{ji}} - 1}\prod\limits_{k \in [1,n],k \ne j}^ \triangleleft  {x_k^{{{\left[ { - {b_{ki}}} \right]}_ + }}}  \cdot \prod\limits_{k \in [1,n],k \ne i}^ \triangleleft  {x_k^{{{\left[ {{b_{kj}}} \right]}_ + }}}  \cdot {x_{n + j}}.
\end{eqnarray*}	

We rewrite the above formula as follows
\begin{eqnarray}\label{(y1y2)n11}
{y_i}{y_j} = {y_j}{y_i} + Cx_i^{{b_{ij}} - 1}x_j^{-{b_{ji}} - 1}D{x_{n + j}},
\end{eqnarray}
where $C = {{q^{ - \frac{{{d_j}}}{2}}} - {q^{ - \frac{{{d_j}}}{2} - {d_j}{b_{ji}}}}}$  and $D =  \prod\limits_{k \in [1,n],k \ne j}^ \triangleleft  {x_k^{{{\left[ { - {b_{ki}}} \right]}_ + }}}  \cdot \prod\limits_{k \in [1,n],k \ne i}^ \triangleleft  {x_k^{{{\left[ {{b_{kj}}} \right]}_ + }}} $.

Thus, for any $s\geq 1$, we have
\begin{eqnarray}\label{(yinyj)1}
	y_i^s{y_j}&\mathop  = \limits^{(\ref{(y1y2)n11})}& y_i^{s - 1}\left( {{y_j}{y_i} + Cx_i^{{b_{ij}} - 1}x_j^{-{b_{ji}} - 1}D{x_{n + j}}} \right)\nonumber\\
&	= &y_i^{s- 1}{y_j}{y_i} + Cy_i^{s - 1}x_i^{{b_{ij}} - 1}x_j^{-{b_{ji}} - 1}D{x_{n + j}}\nonumber\\
&	\mathop  = \limits^{(\ref{(y1y2)n11})} &y_i^{s - 2}{y_j}y_i^2 + Cy_i^{s - 2}x_i^{{b_{ij}} - 1}x_j^{-{b_{ji}} - 1}D{x_{n + j}}{y_i} + Cy_i^{s - 1}x_i^{{b_{ij}} - 1}x_j^{-{b_{ji}} - 1}D{x_{n + j}}\nonumber\\
&	\mathop  = \limits^{(\ref{(y1y2)n11})} &{y_j}y_i^s + \sum\limits_{t = 0}^{s - 1} {Cy_i^{s - 1 - t}x_i^{{b_{ij}} - 1}x_j^{-{b_{ji}} - 1}D{x_{n + j}}y_i^t}.
\end{eqnarray}

Now, we calculate
\begin{eqnarray*}
&&	\sum\limits_{r = 0}^{{b_{ij}} + 1} {{{( - 1)}^r}{{\left( {{q^{{d_i}}}} \right)}^{\frac{{r(r - 1)}}{2} - r{b_{ij}}}}{{\left[ {\begin{array}{*{20}{c}}
						{{b_{ij}} + 1}\\
						r
				\end{array}} \right]}_{{q^{{d_i}}}}}y_i^{{b_{ij}} + 1 - r}{y_j}y_i^r}  = \sum\limits_{r = 0}^{{b_{ij}} + 1} {{F_r}y_i^{{b_{ij}} + 1 - r}{y_j}y_i^r} \\
	&=& {F_0}y_i^{{b_{ij}} + 1}{y_j} + {F_1}y_i^{{b_{ij}}}{y_j}y_i^{} + {F_2}y_i^{{b_{ij}} - 1}{y_j}y_i^2 +  \cdots  + {F_{{b_{ij}}}}y_i^{}{y_j}y_i^{{b_{ij}}} + {F_{{b_{ij}} + 1}}{y_j}y_i^{{b_{ij}} + 1}\\
	&\mathop  = \limits^{(\ref{(yinyj)1})}&
		{F_0}{y_j}y_i^{{b_{ij}} + 1} + {F_0}\sum\limits_{t = 0}^{{b_{ij}}} {Cy_i^{{b_{ij}} - t}x_i^{{b_{ij}} - 1}x_j^{-{b_{ji}} - 1}D{x_{n + j}}y_i^t} \\
	&&	+ {F_1}{y_j}y_i^{{b_{ij}} + 1} + {F_1}\sum\limits_{t = 1}^{{b_{ij}}} {Cy_i^{{b_{ij}} - t}x_i^{{b_{ij}} - 1}x_j^{-{b_{ji}} - 1}D{x_{n + j}}y_i^t} \\
	&&	+ {F_2}{y_j}y_i^{{b_{ij}} + 1} + {F_2}\sum\limits_{t = 2}^{{b_{ij}}} {Cy_i^{{b_{ij}} - t}x_i^{{b_{ij}} - 1}x_j^{-{b_{ji}} - 1}D{x_{n + j}}y_i^t} \\
	&&	+  \cdots  + {F_{{b_{ij}}}}{y_j}y_i^{{b_{ij}} + 1} + {F_{{b_{ij}}}}Cx_i^{{b_{ij}} - 1}x_j^{-{b_{ji}} - 1}D{x_{n + j}}y_i^{{b_{ij}}} + {F_{{b_{ij}} + 1}}{y_j}y_i^{{b_{ij}} + 1}\\
	&	=& \left( {\sum\limits_{r = 0}^{{b_{ij}} + 1} {{F_r}} } \right){y_j}y_i^{{b_{ij}} + 1} + {F_0}Cy_i^{{b_{ij}}}x_i^{{b_{ij}} - 1}x_j^{-{b_{ji}} - 1}D{x_{n + j}}\\
		&& + \left( {\sum\limits_{r = 0}^1 {{F_r}} } \right)Cy_i^{{b_{ij}} - 1}x_i^{{b_{ij}} - 1}x_j^{-{b_{ji}} - 1}D{x_{n + j}}y_i^{}\\
	&&	+  \cdots  + \left( {\sum\limits_{r = 0}^t {{F_r}} } \right)Cy_i^{{b_{ij}} - t}x_i^{{b_{ij}} - 1}x_j^{{b_{ji}} - 1}D{x_{n + j}}y_i^t \\&&+  \cdots  + \left( {\sum\limits_{r = 0}^{{b_{ij}}} {{F_r}} } \right)Cx_i^{{b_{ij}} - 1}x_j^{-{b_{ji}} - 1}D{x_{n + j}}y_i^{{b_{ij}}}\\
	&	= &\left( {\sum\limits_{r = 0}^{{b_{ij}} + 1} {{F_r}} } \right){y_j}y_i^{{b_{ij}} + 1} + \sum\limits_{t = 0}^{{b_{ij}}} {\left( {\left( {\sum\limits_{r = 0}^t {{F_r}} } \right)Cy_i^{{b_{ij}} - t}x_i^{{b_{ij}} - 1}x_j^{-{b_{ji}} - 1}D{x_{n + j}}y_i^t} \right)}  ,
\end{eqnarray*}
where ${F_r} =  {{{( - 1)}^r}{{\left( {{q^{{d_i}}}} \right)}^{\frac{{r(r - 1)}}{2} - r{b_{ij}}}}{{\left[ {\begin{array}{*{20}{c}}
					{{b_{ij}} + 1}\\
					r
			\end{array}} \right]}_{{q^{{d_i}}}}}} $.	

Note that
\[\sum\limits_{r = 0}^{ {b_{ij}} + 1} {{F_r}}  =  \sum\limits_{r = 0}^{{b_{ij}} + 1} {{{( - 1)}^r}{{\left( {{q^{{d_i}}}} \right)}^{\frac{{r(r - 1)}}{2} - r{b_{ij}}}}{{\left[ {\begin{array}{*{20}{c}}
					{{b_{ij}} + 1}\\
					r
			\end{array}} \right]}_{{q^{{d_i}}}}}}    \mathop  = \limits^{(\ref{(-cr)})} 0.\]

On the other hand, 	according to  (\ref{(A)}), we have
\begin{eqnarray*}
(\Lambda_{i})_{in+j}& =&  - {\lambda _{in + j}} + \sum\limits_{t = 1}^{2n} {{{[{b_{ti}}]}_ + }{\lambda _{tn + j}}}  = \sum\limits_{t = 1}^n {{{[{b_{ti}}]}_ + }{\lambda _{tn + j}}}  + \sum\limits_{t = n + 1}^{2n} {{{[{b_{ti}}]}_ + }{\lambda _{tn + j}}} \\
	&	=& {[{b_{ji}}]_ + }{\lambda _{jn + j}} + {[{b_{n + ii}}]_ + }{\lambda _{n + in + j}} = {\lambda _{n + in + j}} =  - {d_i}{b_{ij}}.
\end{eqnarray*}	

Thus, we obtain
\begin{eqnarray*}
&&	\sum\limits_{t = 0}^{{b_{ij}}} {\left( {\left( {\sum\limits_{r = 0}^t {{F_r}} } \right)Cy_i^{{b_{ij}} - t}x_i^{{b_{ij}} - 1}x_j^{-{b_{ji}} - 1}D{x_{n + j}}y_i^t} \right)} \\
	&=&C\left( {\sum\limits_{t = 0}^{{b_{ij}}} {{{\left( {{q^{{d_i}}}} \right)}^{t{b_{ij}}}}\left( {\sum\limits_{r = 0}^t {{F_r}} } \right)y_i^{{b_{ij}} - t}x_i^{{b_{ij}} - 1}y_i^t} } \right)x_j^{ - {b_{ji}} - 1}D{x_{n + j}}\mathop  = \limits^{(\ref{(lemma21)})} 0.
\end{eqnarray*}
\end{enumerate}
This finishes the proof.
\end{proof}

For any given skew-symmetrizable matrix $B$, one can consider the corresponding Cartan matrix $C=(c_{ij})_{n\times n }$ with the same symmetrizer matrix $D=\operatorname{diag}(d_1, d_2, \dots, d_n)$, where $c_{ii}=2$ for $1\leq i\leq n$ and $c_{ij}=- \left| {b_{ij}} \right|$ for $1\leq i\neq j\leq n$.
Denote by ${{\widehat U^ + } }\left( {C,q} \right)$ the quotient algebra of  the free  algebra  over $\mathbb{Z}[{q^{\pm\frac{1}{2}}}]$ generated by ${\theta _i}\ (1\le i\le n)$ subject to the relations
\begin{eqnarray*}
	\sum\limits_{r = 0}^{\left| {{b_{ij}}} \right| + 1} {{{( - 1)}^r}{{\left( {{q^{{d_i}}}} \right)}^{\frac{{r(r - 1)}}{2}}}{{\left[ {\begin{array}{*{20}{c}}
						{\left| {{b_{ij}}} \right| + 1}\\
						r
				\end{array}} \right]}_{{q^{{d_i}}}}}\theta _i^{\left| {{b_{ij}}} \right| + 1 - r}{\theta _j}\theta _i^r},\\
	\sum\limits_{r = 0}^{\left| {{b_{ji}}} \right| + 1} {{{( - 1)}^r}{{\left( {{q^{{d_j}}}} \right)}^{\frac{{r(r - 1)}}{2}}}{{\left[ {\begin{array}{*{20}{c}}
						{\left| {{b_{ji}}} \right| + 1}\\
						r
				\end{array}} \right]}_{{q^{{d_j}}}}}\theta _j^r{\theta _i}\theta _j^{\left| {{b_{ji}}} \right| + 1 - r}},
\end{eqnarray*}
for all pairs $i\neq j$. Note that  after we twist the multiplication of this algebra by using the Euler form and
we obtain the positive part of the standard quantum group associated with the Cartan matrix $C=(c_{ij})_{n\times n }$ (See Ringel's comments on page $60$ in \cite{R4}).

We have the following result.
\begin{corollary}
	The assignment ${\theta _i} \rightarrow {y_i}\ (1 \le i \le n)$ defines a homomorphism of algebras from   ${{\widehat U^ + }}\left( {C,q} \right)$ to   $\mathcal{A}_{q}$.
\end{corollary}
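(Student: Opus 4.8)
My plan is to deduce the corollary from the universal property of the quotient algebra. By definition $\widehat{U}^{+}(C,q)$ is the quotient of the free $\mathbb{Z}[q^{\pm\frac12}]$-algebra on the symbols $\theta_{1},\dots,\theta_{n}$ by the two-sided ideal generated by the two listed families of elements, so the $\mathbb{Z}[q^{\pm\frac12}]$-algebra homomorphism out of that free algebra with $\theta_{i}\mapsto y_{i}$ descends to $\widehat{U}^{+}(C,q)$ exactly when each of those generators is sent to $0$. Thus the corollary is equivalent to the assertion that in $\mathcal{A}_{q}$ one has
\begin{equation}\label{fund-hom-1}
\sum_{r=0}^{|b_{ij}|+1}(-1)^{r}\bigl(q^{d_{i}}\bigr)^{\frac{r(r-1)}{2}}\left[\begin{smallmatrix}|b_{ij}|+1\\ r\end{smallmatrix}\right]_{q^{d_{i}}}y_{i}^{|b_{ij}|+1-r}y_{j}y_{i}^{r}=0
\end{equation}
and
\begin{equation}\label{fund-hom-2}
\sum_{r=0}^{|b_{ji}|+1}(-1)^{r}\bigl(q^{d_{j}}\bigr)^{\frac{r(r-1)}{2}}\left[\begin{smallmatrix}|b_{ji}|+1\\ r\end{smallmatrix}\right]_{q^{d_{j}}}y_{j}^{r}y_{i}y_{j}^{|b_{ji}|+1-r}=0 \qquad (i\neq j),
\end{equation}
and this is what I would verify, taking everything from Theorem \ref{serry}.

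Fixing a pair $\{i,j\}$, I would use $d_{i}b_{ij}=-d_{j}b_{ji}$ to arrange, after interchanging $i$ and $j$ if necessary, that $b_{ij}\le 0$; then $b_{ji}\ge 0$, $|b_{ij}|=-b_{ij}$, $|b_{ji}|=b_{ji}$, and the left-hand side of \eqref{fund-hom-1} is literally that of Theorem \ref{serry}(1), hence $0$. For \eqref{fund-hom-2}: if $b_{ji}=0$ then $b_{ij}=0$ as well, so $y_{i}y_{j}=y_{j}y_{i}$ by Theorem \ref{serry}(1) and the sum is $y_{i}y_{j}-y_{j}y_{i}=0$; if $b_{ji}>0$, I would apply Theorem \ref{serry}(2) to the pair $(j,i)$ to obtain
\begin{equation}\label{fund-hom-3}
\sum_{r=0}^{b_{ji}+1}(-1)^{r}\bigl(q^{d_{j}}\bigr)^{\frac{r(r-1)}{2}-rb_{ji}}\left[\begin{smallmatrix}b_{ji}+1\\ r\end{smallmatrix}\right]_{q^{d_{j}}}y_{j}^{b_{ji}+1-r}y_{i}y_{j}^{r}=0,
\end{equation}
and then apply the bar-involution. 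Each $y_{i}$, being by \eqref{(er)} a sum of two elements $X^{\mathbf{c}}$ with $\mathbf{c}\in\mathbb{Z}^{2n}$, lies in $\mathcal{T}$ and is bar-invariant, and bar reverses products, so $\overline{y_{j}^{b_{ji}+1-r}y_{i}y_{j}^{r}}=y_{j}^{r}y_{i}y_{j}^{b_{ji}+1-r}$; moreover bar sends $q^{d_{j}}$ to $q^{-d_{j}}$, so by identity \eqref{q+-b1} it turns the scalar $\bigl(q^{d_{j}}\bigr)^{\frac{r(r-1)}{2}-rb_{ji}}\left[\begin{smallmatrix}b_{ji}+1\\ r\end{smallmatrix}\right]_{q^{d_{j}}}$ into $\bigl(q^{d_{j}}\bigr)^{\frac{r(r-1)}{2}}\left[\begin{smallmatrix}b_{ji}+1\\ r\end{smallmatrix}\right]_{q^{d_{j}}}$ after a one-line exponent check. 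Hence the bar of \eqref{fund-hom-3} is precisely the left-hand side of \eqref{fund-hom-2}, so the latter vanishes. (Equivalently one reindexes \eqref{fund-hom-3} by $r\mapsto b_{ji}+1-r$ and uses $\left[\begin{smallmatrix}N\\ r\end{smallmatrix}\right]_{q^{d_{j}}}=\left[\begin{smallmatrix}N\\ N-r\end{smallmatrix}\right]_{q^{d_{j}}}$ together with $\tfrac{(N-r)(N-r-1)}{2}-(N-r)(N-1)=\tfrac{r(r-1)}{2}-\tfrac{N(N-1)}{2}$ for $N=b_{ji}+1$.) This would give \eqref{fund-hom-1} and \eqref{fund-hom-2} for every pair, hence the homomorphism.

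I expect the whole thing to be essentially formal; the one non-tautological point is that Theorem \ref{serry} produces the relation for $b_{ij}>0$ in a $q$-twisted, ``left-heavy'' shape, whereas the defining relations of $\widehat{U}^{+}(C,q)$ are written in the untwisted shape. Reconciling these two normalizations — done by the bar-involution, or equally by the reindexing $r\mapsto N-r$, as above — is the main (and still minor) obstacle; everything else is bookkeeping already subsumed in Theorem \ref{serry}.
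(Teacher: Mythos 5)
Your proposal is correct and follows essentially the same route as the paper: reduce to $b_{ij}\le 0$, get the first family of relations directly from Theorem \ref{serry}(1), and obtain the second by applying the bar-involution (using bar-invariance of the $y_k$ and identity \eqref{q+-b1}) to Theorem \ref{serry}(2) for the pair $(j,i)$. The only additions are cosmetic — your explicit treatment of the degenerate case $b_{ji}=0$ and the alternative reindexing $r\mapsto N-r$ — neither of which changes the argument.
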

\begin{proof}
Without loss of generality, assume that $b_{ij}\le 0$. Applying the bar-involution $-$ to the formula in Theorem \ref{serry} (2), we have

\begin{eqnarray*}
	0& =& \sum\limits_{r = 0}^{{b_{ji}} + 1} {{{( - 1)}^r}{{\left( {{q^{{d_j}}}} \right)}^{ - \frac{{r(r - 1)}}{2} + r{b_{ji}}}}\overline {{{\left[ {\begin{array}{*{20}{c}}
							{{b_{ji}} + 1}\\
							r
					\end{array}} \right]}_{{q^{{d_j}}}}}} y_j^r{y_i}y_j^{{b_{ji}} + 1 - r}} \\
&	\mathop  = \limits^{(\ref{q+-b1})}& \sum\limits_{r = 0}^{{b_{ji}} + 1} {{{( - 1)}^r}{{\left( {{q^{{d_j}}}} \right)}^{ - \frac{{r(r - 1)}}{2} + r{b_{ji}}}}{{\left( {{q^{{d_j}}}} \right)}^{ - r({b_{ji}} + 1 - r)}}{{\left[ {\begin{array}{*{20}{c}}
						{{b_{ji}} + 1}\\
						r
				\end{array}} \right]}_{{q^{{d_j}}}}}y_j^r{y_i}y_j^{{b_{ji}} + 1 - r}} \\
&	=& \sum\limits_{r = 0}^{{b_{ji}} + 1} {{{( - 1)}^r}{{\left( {{q^{{d_j}}}} \right)}^{\frac{{r(r - 1)}}{2}}}{{\left[ {\begin{array}{*{20}{c}}
						{{b_{ji}} + 1}\\
						r
				\end{array}} \right]}_{{q^{{d_j}}}}}y_j^r{y_i}y_j^{{b_{ji}} + 1 - r}.}
\end{eqnarray*}

Then, combining Theorem \ref{serry} (1), the proof is finished.
\end{proof}

\begin{example}\label{exam1}
	Given an initial quantum seed $\big(\widetilde{\mathbf{x}}, \Lambda, \widetilde{B}\big)$, where ${\widetilde{\mathbf{x}}}=\{x_1,x_2,x_3,x_{4}\}$, $$\Lambda  = \left( {\begin{array}{*{20}{c}}
			0&0&{ - 2}&0\\
			0&0&0&{ - 1}\\
			2&0&0&-2\\
			0&1&{ 2}&0
	\end{array}} \right) \text{ and }\,\,{\widetilde{B}}=\left( {\begin{array}{*{20}{c}}
			0&{ 1}\\
			-2&0\\
			1&0\\
			0&1
	\end{array}} \right).$$
By mutating the seed $(\widetilde{\mathbf{x}},\Lambda,\widetilde{B})$ in the direction $1$, we obtain the seed $(\widetilde{\mathbf{x}}_1,\Lambda_1,\widetilde{B}_1)$, where $\widetilde{\mathbf{x}}_1=\{y_1,x_2,x_3,x_{4}\}$, $\Lambda_1  = \left( {\begin{array}{*{20}{c}}
			0&0&{2}&-2\\
			0&0&0&{ - 1}\\
			-2&0&0&-2\\
			2&1&{ 2}&0
	\end{array}} \right)$ and  ${\widetilde{B}_1}=\left( {\begin{array}{*{20}{c}}
			0&{-1}\\
			2&0\\
			-1&1\\
			0&1
	\end{array}} \right)$ with the  exchange relation ${x_1}{y_1} = {q^{ - 1}}{x_3} + x_2^2.$
By mutating the seed $(\widetilde{\mathbf{x}},\Lambda,\widetilde{B})$ in the direction $2$, we obtain the seed $(\widetilde{\mathbf{x}}_2,\Lambda_2,\widetilde{B}_2)$, where $\widetilde{\mathbf{x}}_2=\{x_1,y_2,x_3,x_{4}\}$,
$$\Lambda_2  = \left( {\begin{array}{*{20}{c}}
			0&0&{ - 2}&0\\
			0&0&0&{  1}\\
			2&0&0&-2\\
			0&-1&{ 2}&0
	\end{array}} \right) \text{  and }  {\widetilde{B}_2}=\left( {\begin{array}{*{20}{c}}
			0&-1\\
			{  2}&0\\
			1&0\\
			0&{ - 1}
	\end{array}} \right)$$with the  exchange relation ${x_2}{y_2} = {q^{ - \frac{1}{2}}}{x_1}{x_4} + 1.$
We claim that
	\[\begin{array}{l}
		\sum\limits_{r = 0}^3 {{{( - 1)}^r}{q^{\frac{{r(r - 1)}}{2}}}{{\left[ {\begin{array}{*{20}{c}}
							3\\
							r
					\end{array}} \right]}_q}y_2^{3 - r}{y_1}y_2^r}  = 0,\\
		\sum\limits_{r = 0}^2 {{{( - 1)}^r}{{\left( {{q^2}} \right)}^{\frac{{r(r - 1)}}{2} - r}}{{\left[ {\begin{array}{*{20}{c}}
							2\\
							r
					\end{array}} \right]}_{{q^2}}}y_1^{2 - r}{y_2}y_1^r}  = 0.
	\end{array}\]	
	
	It is easy to check that
	\[{y_1}{y_2} = {q^{\frac{1}{2}}}x_2^{ - 1}{x_3}{x_4} + {q^{ -1}}x_1^{ - 1}x_2^{ - 1}{x_3} + {q^{ - \frac{1}{2}}}x_2^{}{x_4} + x_1^{ - 1}x_2^{},\]	
	\[{y_2}{y_1} = {q^{\frac{1}{2}}}x_2^{ - 1}{x_3}{x_4} + {q^{ - 1}}x_1^{ - 1}x_2^{ - 1}{x_3} + {q^{ - \frac{1}{2} + 2}}x_2^{}{x_4} + x_1^{ - 1}x_2^{}.\]
	
	It follows that	
	\begin{equation}\label{(ex)}
		{y_2}{y_1} = {y_1}{y_2} +A{x_2}{x_4},\end{equation}
	where $A={{q^{\frac{3}{2}}} - {q^{ - \frac{1}{2}}}}$.	
	
	Then, we have
	\begin{eqnarray*}
		&&\sum\limits_{r = 0}^3 {{{( - 1)}^r}{q^{\frac{{r(r - 1)}}{2}}}{{\left[ {\begin{array}{*{20}{c}}
							3\\
							r
					\end{array}} \right]}_q}y_2^{3 - r}{y_1}y_2^r} \\
		&	=& y_2^3{y_1} - {\left[ {\begin{array}{*{20}{c}}
					3\\
					1
			\end{array}} \right]_q}y_2^2{y_1}y_2^{} + q{\left[ {\begin{array}{*{20}{c}}
					3\\
					2
			\end{array}} \right]_q}y_2^{}{y_1}y_2^2 - {q^3}{y_1}y_2^3\\
		&	\mathop  = \limits^{(\ref{(ex)})}& {y_1}y_2^3 + Ay_2^2{x_2}{x_4} + A{y_2}{x_2}{x_4}{y_2} + A{x_2}{x_4}y_2^2\\
		&&	- {\left[ {\begin{array}{*{20}{c}}
					3\\
					1
			\end{array}} \right]_q}{y_1}y_2^3 - {\left[ {\begin{array}{*{20}{c}}
					3\\
					1
			\end{array}} \right]_q}A{y_2}{x_2}{x_4}{y_2} - {\left[ {\begin{array}{*{20}{c}}
					3\\
					1
			\end{array}} \right]_q}A{x_2}{x_4}y_2^2\\
		&&	+ q{\left[ {\begin{array}{*{20}{c}}
					3\\
					2
			\end{array}} \right]_q}{y_1}y_2^3 + q{\left[ {\begin{array}{*{20}{c}}
					3\\
					2
			\end{array}} \right]_q}A{x_2}{x_4}y_2^2 - {q^3}{y_1}y_2^3\\
		&	=& \left( {1 - {{\left[ {\begin{array}{*{20}{c}}
							3\\
							1
					\end{array}} \right]}_q} + q{{\left[ {\begin{array}{*{20}{c}}
							3\\
							2
					\end{array}} \right]}_q} - {q^3}} \right){y_1}y_2^3 + Ay_2^2{x_2}{x_4}\\
		&&	+ \left( {1 - {{\left[ {\begin{array}{*{20}{c}}
							3\\
							1
					\end{array}} \right]}_q}} \right)A{y_2}{x_2}{x_4}{y_2} + \left( {1 - {{\left[ {\begin{array}{*{20}{c}}
							3\\
							1
					\end{array}} \right]}_q} + q{{\left[ {\begin{array}{*{20}{c}}
							3\\
							2
					\end{array}} \right]}_q}} \right)A{x_2}{x_4}y_2^2.
	\end{eqnarray*}
	
	Note that
	\[1 - {\left[ {\begin{array}{*{20}{c}}
				3\\
				1
		\end{array}} \right]_q} + q{\left[ {\begin{array}{*{20}{c}}
				3\\
				2
		\end{array}} \right]_q} - {q^3} = 1 - 1 - q - {q^2} + q + {q^2} + {q^3} - {q^3} = 0.\]
	
	Thus, we get
	\begin{eqnarray*}
		&&	\sum\limits_{r = 0}^3 {{{( - 1)}^r}{q^{\frac{{r(r - 1)}}{2}}}{{\left[ {\begin{array}{*{20}{c}}
							3\\
							r
					\end{array}} \right]}_q}y_2^{3 - r}{y_1}y_2^r} \\
		&	=& Ay_2^2{x_2}{x_4} + \left( {1 - {{\left[ {\begin{array}{*{20}{c}}
							3\\
							1
					\end{array}} \right]}_q}} \right)A{y_2}{x_2}{x_4}{y_2} + \left( {1 - {{\left[ {\begin{array}{*{20}{c}}
							3\\
							1
					\end{array}} \right]}_q} + q{{\left[ {\begin{array}{*{20}{c}}
							3\\
							2
					\end{array}} \right]}_q}} \right)A{x_2}{x_4}y_2^2\\
		&	=& Ay_2^2{x_2}{x_4} + {q^{ - 1}}\left( {1 - {{\left[ {\begin{array}{*{20}{c}}
							3\\
							1
					\end{array}} \right]}_q}} \right)A{y_2}{x_2}{y_2}{x_4} + {q^{ - 2}}\left( {1 - {{\left[ {\begin{array}{*{20}{c}}
							3\\
							1
					\end{array}} \right]}_q} + q{{\left[ {\begin{array}{*{20}{c}}
							3\\
							2
					\end{array}} \right]}_q}} \right)A{x_2}y_2^2{x_4}\\
		&	=& A\left( {y_2^2{x_2} + {q^{ - 1}}\left( {1 - {{\left[ {\begin{array}{*{20}{c}}
								3\\
								1
						\end{array}} \right]}_q}} \right){y_2}{x_2}{y_2} + {q^{ - 2}}\left( {1 - {{\left[ {\begin{array}{*{20}{c}}
								3\\
								1
						\end{array}} \right]}_q} + q{{\left[ {\begin{array}{*{20}{c}}
								3\\
								2
						\end{array}} \right]}_q}} \right){x_2}y_2^2} \right){x_4}.
	\end{eqnarray*}
	
	Hence, the claim can be deduced  from the following direct computation
	\begin{eqnarray*}
		&&	y_2^2{x_2} + {q^{ - 1}}\left( {1 - {{\left[ {\begin{array}{*{20}{c}}
							3\\
							1
					\end{array}} \right]}_q}} \right){y_2}{x_2}{y_2} + {q^{ - 2}}\left( {1 - {{\left[ {\begin{array}{*{20}{c}}
							3\\
							1
					\end{array}} \right]}_q} + q{{\left[ {\begin{array}{*{20}{c}}
							3\\
							2
					\end{array}} \right]}_q}} \right){x_2}y_2^2\\
		&	=& {y_2}\left( {{q^{\frac{1}{2}}}{x_1}{x_4} + 1} \right) + \left( { - 1 - q} \right)\left( {{q^{\frac{1}{2}}}{x_1}{x_4} + 1} \right){y_2} + q\left( {{q^{ - \frac{1}{2}}}{x_1}{x_4} + 1} \right){y_2}\\
		&	=& {q^{\frac{1}{2}}}{x_1}{y_2}{x_4} + {y_2} + \left( { - {q^{ - \frac{1}{2}}} - {q^{\frac{1}{2}}}} \right){x_1}{y_2}{x_4} + \left( { - 1 - q} \right){y_2} + {q^{ - \frac{1}{2}}}{x_1}{y_2}{x_4} + {q}{y_2}
		= 0.
	\end{eqnarray*}
	
	Similarly, we have
	\begin{eqnarray*}
		&&	\sum\limits_{r = 0}^2 {{{( - 1)}^r}{{\left( {{q^2}} \right)}^{\frac{{r(r - 1)}}{2} - r}}{{\left[ {\begin{array}{*{20}{c}}
							2\\
							r
					\end{array}} \right]}_{{q^2}}}y_1^{2 - r}{y_2}y_1^r} \\
		&	= &y_1^2{y_2} - {q^{ - 2}}{\left[ {\begin{array}{*{20}{c}}
					2\\
					1
			\end{array}} \right]_{{q^2}}}y_1^{}{y_2}y_1^{} + {q^{ - 2}}{y_2}y_1^2\\
		&	=& {y_2}y_1^2 - A{y_1}{x_2}{x_4} - A{x_2}{x_4}{y_1} - {q^{ - 2}}{\left[ {\begin{array}{*{20}{c}}
					2\\
					1
			\end{array}} \right]_{{q^2}}}{y_2}y_1^2 + {q^{ - 2}}{\left[ {\begin{array}{*{20}{c}}
					2\\
					1
			\end{array}} \right]_{{q^2}}}A{x_2}{x_4}{y_1} + {q^{ - 2}}{y_2}y_1^2\\
		&	=& \left( {1 - {q^{ - 2}}{{\left[ {\begin{array}{*{20}{c}}
							2\\
							1
					\end{array}} \right]}_{{q^2}}} + {q^{ - 2}}} \right){y_2}y_1^2 - A{y_1}{x_2}{x_4} - A{x_2}{x_4}{y_1} + {q^{ - 2}}{\left[ {\begin{array}{*{20}{c}}
					2\\
					1
			\end{array}} \right]_{{q^2}}}A{x_2}{x_4}{y_1}\\
		&	=& \left( {1 - {q^{ - 2}}\left( {1 + {q^2}} \right) + {q^{ - 2}}} \right){y_2}y_1^2  - A{y_1}{x_2}{x_4} - A{q^2}{y_1}{x_2}{x_4} + \left( {1 + {q^2}} \right)A{y_1}{x_2}{x_4}
		=0.
	\end{eqnarray*}	
\end{example}
\begin{example}\label{exam3}
	Given an initial quantum seed $\big(\widetilde{\mathbf{x}}, \Lambda, \widetilde{B}\big)$, where ${\widetilde{\mathbf{x}}}=\{x_1,x_2,\cdots ,x_{6}\}$, $$\Lambda  = \left( {\begin{array}{*{20}{c}}
			0&0&0&{ - 1}&0&0\\
			0&0&0&0&{ - 1}&0\\
			0&0&0&0&0&{ - 1}\\
			1&0&0&0&{ - 2}&2\\
			0&1&0&2&0&{ - 2}\\
			0&0&1&{ - 2}&2&0
	\end{array}} \right) \text{ and }{\widetilde{B}}=\left( {\begin{array}{*{20}{c}}
			0&2&{ - 2}\\
			{ - 2}&0&2\\
			2&{ - 2}&0\\
			1&0&0\\
			0&1&0\\
			0&0&1
	\end{array}} \right).$$
By mutating the seed $(\widetilde{\mathbf{x}},\Lambda,\widetilde{B})$ in the direction $1$, we obtain the seed $(\widetilde{\mathbf{x}}_1,\Lambda_1,\widetilde{B}_1)$, where $\widetilde{\mathbf{x}}_1=\{y_1,x_2,\cdots,x_{6}\}$, $\Lambda_1  =\left( {\begin{array}{*{20}{c}}
			0&0&0&1&{ - 2}&0\\
			0&0&0&0&{ - 1}&0\\
			0&0&0&0&0&{ - 1}\\
			{ - 1}&0&0&0&{ - 2}&2\\
			2&1&0&2&0&{ - 2}\\
			0&0&1&{ - 2}&2&0
	\end{array}} \right)$,  ${\widetilde{B}_1}=\left( {\begin{array}{*{20}{c}}
			0&{ - 2}&2\\
			2&0&{ - 2}\\
			{ - 2}&2&0\\
			{ - 1}&2&0\\
			0&1&0\\
			0&0&1
	\end{array}} \right)$\\
with the  exchange relation ${x_1}{y_1} = {q^{ - \frac{1}{2}}}x_3^2{x_4} + x_2^2.$
	
	By mutating the seed $(\widetilde{\mathbf{x}},\Lambda,\widetilde{B})$ in the direction $2$, we obtain the seed $(\widetilde{\mathbf{x}}_2,\Lambda_2,\widetilde{B}_2)$, where $\widetilde{\mathbf{x}}_2=\{x_1,y_2,x_3,\cdots,x_{6}\}$, $$\Lambda_2  = \left( {\begin{array}{*{20}{c}}
			0&0&0&{ - 1}&0&0\\
			0&0&0&0&1&{ - 2}\\
			0&0&0&0&0&{ - 1}\\
			1&0&0&0&{ - 2}&2\\
			0&{ - 1}&0&2&0&{ - 2}\\
			0&2&1&{ - 2}&2&0
	\end{array}} \right) \text{ and } {\widetilde{B}_2}=\left( {\begin{array}{*{20}{c}}
			0&{ - 2}&2\\
			2&0&{ - 2}\\
			{ - 2}&2&0\\
			1&0&0\\
			0&{ - 1}&2\\
			0&0&1
	\end{array}} \right)$$ with the exchange relation ${x_2}{y_2} = {q^{ - \frac{1}{2}}}x_1^2{x_5} + x_3^2.$
By mutating the seed $(\widetilde{\mathbf{x}},\Lambda,\widetilde{B})$ in the direction $3$, we obtain the seed $(\widetilde{\mathbf{x}}_3,\Lambda_3,\widetilde{B}_3)$, where $\widetilde{\mathbf{x}}_3=\{x_1,x_2,y_3,x_4,x_5,x_{6}\}$,

$$\Lambda_3  = \left( {\begin{array}{*{20}{c}}
			0&0&0&{ - 1}&0&0\\
			0&0&0&0&{ - 1}&0\\
			0&0&0&{ - 2}&0&1\\
			1&0&2&0&{ - 2}&2\\
			0&1&0&2&0&{ - 2}\\
			0&0&{ - 1}&{ - 2}&2&0
	\end{array}} \right) \text{ and } {\widetilde{B}_3}=\left( {\begin{array}{*{20}{c}}
			0&{ - 2}&2\\
			2&0&{ - 2}\\
			{ - 2}&2&0\\
			1&0&0\\
			0&1&0\\
			2&0&{ - 1}
	\end{array}} \right)$$ with the exchange relation ${x_3}{y_3} = {q^{ - \frac{1}{2}}}x_2^2{x_6} + x_1^2.$

	We claim that
	\begin{eqnarray*}
		\sum\limits_{r = 0}^3 {{{( - 1)}^r}{q^{\frac{{r(r - 1)}}{2}}}{{\left[ {\begin{array}{*{20}{c}}
							3\\
							r
					\end{array}} \right]}_q}y_2^{3 - r}{y_1}y_2^r}  = 0;\quad \sum\limits_{r = 0}^3 {{{( - 1)}^r}{q^{\frac{{r(r - 1)}}{2} - 2r}}{{\left[ {\begin{array}{*{20}{c}}
							3\\
							r
					\end{array}} \right]}_q}y_1^{3 - r}{y_2}y_1^r}  = 0;\\
		\sum\limits_{r = 0}^3 {{{( - 1)}^r}{q^{\frac{{r(r - 1)}}{2}}}{{\left[ {\begin{array}{*{20}{c}}
							3\\
							r
					\end{array}} \right]}_q}y_1^{3 - r}{y_3}y_1^r}  = 0;\quad \sum\limits_{r = 0}^3 {{{( - 1)}^r}{q^{\frac{{r(r - 1)}}{2} - 2r}}{{\left[ {\begin{array}{*{20}{c}}
							3\\
							r
					\end{array}} \right]}_q}y_3^{3 - r}{y_1}y_3^r}  = 0;\\
		\sum\limits_{r = 0}^3 {{{( - 1)}^r}{q^{\frac{{r(r - 1)}}{2}}}{{\left[ {\begin{array}{*{20}{c}}
							3\\
							r
					\end{array}} \right]}_q}y_3^{3 - r}{y_2}y_3^r}  = 0;\quad \sum\limits_{r = 0}^3 {{{( - 1)}^r}{q^{\frac{{r(r - 1)}}{2} - 2r}}{{\left[ {\begin{array}{*{20}{c}}
							3\\
							r
					\end{array}} \right]}_q}y_2^{3 - r}{y_3}y_2^r}  = 0.
	\end{eqnarray*}	
It is easy to check
	\begin{eqnarray*}
		{y_1}{y_2} &=& qx_1^{}x_2^{ - 1}x_3^2{x_4}{x_5} + {q^{ - \frac{1}{2}}}x_1^{}x_2^{}{x_5} + {q^{ - \frac{1}{2}}}x_1^{ - 1}x_2^{ - 1}x_3^4{x_4} + x_1^{ - 1}x_2^{}x_3^2\\
		{y_2}{y_1} &=& {q^{}}x_1^{}x_2^{ - 1}x_3^2{x_4}{x_5} + {q^{ - \frac{1}{2}}}x_1^{ - 1}x_2^{ - 1}x_3^4{x_4} + {q^{\frac{3}{2}}}x_1^{}x_2^{}{x_5} + x_1^{ - 1}x_2^{}x_3^2.
	\end{eqnarray*}	
	It follows that	
	\begin{equation}
		{y_2}{y_1} = {y_1}{y_2} + \left( {{q^{\frac{3}{2}}} - {q^{ - \frac{1}{2}}}} \right)x_1^{}x_2^{}{x_5},
	\end{equation}
	where $A={{q^{\frac{3}{2}}} - {q^{ - \frac{1}{2}}}}$.

	Then, we have
	\begin{eqnarray*}
		&&\sum\limits_{r = 0}^3 {{{( - 1)}^r}{q^{\frac{{r(r - 1)}}{2}}}{{\left[ {\begin{array}{*{20}{c}}
							3\\
							r
					\end{array}} \right]}_q}y_2^{3 - r}{y_1}y_2^r}  = y_2^3{y_1} - {\left[ {\begin{array}{*{20}{c}}
					3\\
					1
			\end{array}} \right]_q}y_2^2{y_1}y_2^{} + q{\left[ {\begin{array}{*{20}{c}}
					3\\
					2
			\end{array}} \right]_q}y_2^{}{y_1}y_2^2 - {q^3}{y_1}y_2^3\\
		&	=& {y_1}y_2^3 + Ay_2^2x_1^{}x_2^{}{x_5} + A{y_2}x_1^{}x_2^{}{x_5}{y_2} + Ax_1^{}x_2^{}{x_5}y_2^2\\
		&&	- {\left[ {\begin{array}{*{20}{c}}
					3\\
					1
			\end{array}} \right]_q}{y_1}y_2^3 - {\left[ {\begin{array}{*{20}{c}}
					3\\
					1
			\end{array}} \right]_q}A{y_2}x_1^{}x_2^{}{x_5}{y_2} - {\left[ {\begin{array}{*{20}{c}}
					3\\
					1
			\end{array}} \right]_q}Ax_1^{}x_2^{}{x_5}y_2^2\\
		&&	+ q{\left[ {\begin{array}{*{20}{c}}
					3\\
					2
			\end{array}} \right]_q}{y_1}y_2^3 + q{\left[ {\begin{array}{*{20}{c}}
					3\\
					2
			\end{array}} \right]_q}Ax_1^{}x_2^{}{x_5}y_2^2 - {q^3}{y_1}y_2^3\\
		&	=& \left( {1 - {{\left[ {\begin{array}{*{20}{c}}
							3\\
							1
					\end{array}} \right]}_q}+q{{\left[ {\begin{array}{*{20}{c}}
							3\\
							2
					\end{array}} \right]}_q} - {q^3}} \right){y_1}y_2^3 + Ay_2^2x_1^{}x_2^{}{x_5}\\
		&&	+ A\left( {1 - {{\left[ {\begin{array}{*{20}{c}}
							3\\
							1
					\end{array}} \right]}_q}} \right){y_2}x_1^{}x_2^{}{x_5}{y_2} + A\left( {1 - {{\left[ {\begin{array}{*{20}{c}}
							3\\
							1
					\end{array}} \right]}_q} + q{{\left[ {\begin{array}{*{20}{c}}
							3\\
							2
					\end{array}} \right]}_q}} \right)x_1^{}x_2^{}{x_5}y_2^2\\
				&	=& \left( {1 - 1 - q - {q^2} + q + {q^2} + {q^3} - {q^3}} \right){y_1}y_2^3 + Ay_2^2x_1^{}x_2^{}{x_5}\\
							\end{eqnarray*}		
				\begin{eqnarray*}
				&&	+ A\left( {1 - 1 - q - {q^2}} \right){y_2}x_1^{}x_2^{}{x_5}{y_2} + A\left( {1 - 1 - q - {q^2} + q + {q^2} + {q^3}} \right)x_1^{}x_2^{}{x_5}y_2^2\\
				&	=& Ay_2^2x_1^{}x_2^{}{x_5} - A\left( {q + {q^2}} \right){y_2}x_1^{}x_2^{}{x_5}{y_2} + A{q^3}x_1^{}x_2^{}{x_5}y_2^2\\
				&	=& A{x_1}\left( {y_2^2x_2^{} - \left( {1 + q} \right){y_2}x_2^{}{y_2} + qx_2^{}y_2^2} \right){x_5}.
	\end{eqnarray*}
	
	Hence, the claim can be deduced  from the following direct computation
	\begin{eqnarray*}
		&&y_2^2x_2^{} - \left( {1 + q} \right){y_2}x_2^{}{y_2} + qx_2^{}y_2^2\\
	&	=& {y_2}\left( {{q^{\frac{1}{2}}}x_1^2{x_5} + x_3^2} \right) - \left( {1 + q} \right){y_2}\left( {{q^{ - \frac{1}{2}}}x_1^2{x_5} + x_3^2} \right) + q\left( {{q^{ - \frac{1}{2}}}x_1^2{x_5} + x_3^2} \right){y_2}\\
	&	=& {q^{\frac{1}{2}}}{y_2}x_1^2{x_5} + {y_2}x_3^2 - {q^{ - \frac{1}{2}}}\left( {1 + q} \right){y_2}x_1^2{x_5} - \left( {1 + q} \right){y_2}x_3^2 + {q^{\frac{1}{2}}}x_1^2{x_5}{y_2} + qx_3^2{y_2}\\
	&	=& {q^{\frac{1}{2}}}{y_2}x_1^2{x_5} + {y_2}x_3^2 - \left( {{q^{ - \frac{1}{2}}} + {q^{\frac{1}{2}}}} \right){y_2}x_1^2{x_5} - \left( {1 + q} \right){y_2}x_3^2 + {q^{ - \frac{1}{2}}}x_1^2{y_2}{x_5} + q{y_2}x_3^2 = 0.
	\end{eqnarray*}

	Similarly, we have
	\begin{eqnarray*}
		&&	\sum\limits_{r = 0}^3 {{{( - 1)}^r}{q^{\frac{{r(r - 1)}}{2} - 2r}}{{\left[ {\begin{array}{*{20}{c}}
							3\\
							r
					\end{array}} \right]}_q}y_1^{3 - r}{y_2}y_1^r} \\
		&	=& y_1^3{y_2} - {q^{ - 2}}{\left[ {\begin{array}{*{20}{c}}
					3\\
					1
			\end{array}} \right]_q}y_1^2{y_2}{y_1} + {q^{ - 3}}{\left[ {\begin{array}{*{20}{c}}
					3\\
					2
			\end{array}} \right]_q}{y_1}{y_2}y_1^2 - {q^{ - 3}}{y_2}y_1^3\\
		&	=& {y_2}y_1^3 - Ay_1^2x_1^{}x_2^{}{x_5} - A{y_1}x_1^{}x_2^{}{x_5}{y_1} - Ax_1^{}x_2^{}{x_5}y_1^2\\
		&&	- {q^{ - 2}}{\left[ {\begin{array}{*{20}{c}}
					3\\
					1
			\end{array}} \right]_q}{y_2}y_1^3 + {q^{ - 2}}{\left[ {\begin{array}{*{20}{c}}
					3\\
					1
			\end{array}} \right]_q}A{y_1}x_1^{}x_2^{}{x_5}{y_1} + {q^{ - 2}}{\left[ {\begin{array}{*{20}{c}}
					3\\
					1
			\end{array}} \right]_q}Ax_1^{}x_2^{}{x_5}y_1^2\\
		&&	+{q^{ - 3}}{\left[ {\begin{array}{*{20}{c}}
					3\\
					2
			\end{array}} \right]_q}{y_2}y_1^3 - {q^{ - 3}}{\left[ {\begin{array}{*{20}{c}}
					3\\
					2
			\end{array}} \right]_q}Ax_1^{}x_2^{}{x_5}y_1^2 - {q^{ - 3}}{y_1}y_1^3\\
		&	=& \left( {1 - {q^{ - 2}}{{\left[ {\begin{array}{*{20}{c}}
							3\\
							1
					\end{array}} \right]}_q} + {q^{ - 3}}{{\left[ {\begin{array}{*{20}{c}}
							3\\
							2
					\end{array}} \right]}_q} - {q^{ - 3}}} \right){y_2}y_1^3 - Ay_1^2x_1^{}x_2^{}{x_5}\\
		&&	- \left( {1 - {q^{ - 2}}{{\left[ {\begin{array}{*{20}{c}}
							3\\
							1
					\end{array}} \right]}_q}} \right)A{y_1}x_1^{}x_2^{}{x_5}{y_1} - \left( {1 - {q^{ - 2}}{{\left[ {\begin{array}{*{20}{c}}
							3\\
							1
					\end{array}} \right]}_q} + {q^{ - 3}}{{\left[ {\begin{array}{*{20}{c}}
							3\\
							2
					\end{array}} \right]}_q}} \right)Ax_1^{}x_2^{}{x_5}y_1^2\\
		&=& \left( {1 - {q^{ - 2}} - {q^{ - 1}} - 1 + {q^{ - 3}} + {q^{ - 2}} + {q^{ - 1}} - {q^{ - 3}}} \right){y_2}y_1^3 - Ay_1^2x_1^{}x_2^{}{x_5}\\
	&&	- \left( {1 - {q^{ - 2}} - {q^{ - 1}} - 1} \right)A{y_1}x_1^{}x_2^{}{x_5}{y_1} \\
	&& - \left( {1 - {q^{ - 2}} - {q^{ - 1}} - 1 + {q^{ - 3}} + {q^{ - 2}} + {q^{ - 1}}} \right)Ax_1^{}x_2^{}{x_5}y_1^2\\
	&	= & - A\left( {y_1^2x_1^{} - {q^2}\left( {{q^{ - 2}} + {q^{ - 1}}} \right){y_1}x_1^{}{y_1} + {q^4}{q^{ - 3}}x_1^{}y_1^2} \right){x_2}{x_5}\\
	&	= & - A\left( {y_1^2x_1^{} - \left( {1 + q} \right){y_1}x_1^{}{y_1} + qx_1^{}y_1^2} \right){x_2}{x_5}.
	\end{eqnarray*}

	Hence, the claim can be deduced  from the following direct computation
	\begin{eqnarray*}
		&&	y_1^2x_1^{} - \left( {1 + q} \right){y_1}x_1^{}{y_1} + qx_1^{}y_1^2\\
		&=& y_1^{}\left( {{q^{\frac{1}{2}}}x_3^2{x_4} + x_2^2} \right) - \left( {1 + q} \right){y_1}\left( {{q^{ - \frac{1}{2}}}x_3^2{x_4} + x_2^2} \right) + q\left( {{q^{ - \frac{1}{2}}}x_3^2{x_4} + x_2^2} \right){y_1}\\
		&	=& {q^{\frac{1}{2}}}y_1^{}x_3^2{x_4} + y_1^{}x_2^2 - {q^{ - \frac{1}{2}}}\left( {1 + q} \right){y_1}x_3^2{x_4} - \left( {1 + q} \right){y_1}x_2^2 + {q^{\frac{1}{2}}}x_3^2{x_4}{y_1} + qx_2^2{y_1}\\
		&	=& {q^{\frac{1}{2}}}y_1^{}x_3^2{x_4} + y_1^{}x_2^2 - \left( {{q^{ - \frac{1}{2}}} + {q^{\frac{1}{2}}}} \right){y_1}x_3^2{x_4} - \left( {1 + q} \right){y_1}x_2^2 + {q^{ - \frac{1}{2}}}{y_1}x_3^2{x_4} + q{y_1}x_2^2\\
		&	=& 0.
	\end{eqnarray*}
	
	Similarly, we have	
	\begin{eqnarray*}
	\sum\limits_{r = 0}^3 {{{( - 1)}^r}{q^{\frac{{r(r - 1)}}{2}}}{{\left[ {\begin{array}{*{20}{c}}
						3\\
						r
				\end{array}} \right]}_q}y_1^{3 - r}{y_3}y_1^r}  = 0;\quad \sum\limits_{r = 0}^3 {{{( - 1)}^r}{q^{\frac{{r(r - 1)}}{2} - 2r}}{{\left[ {\begin{array}{*{20}{c}}
						3\\
						r
				\end{array}} \right]}_q}y_3^{3 - r}{y_1}y_3^r}  = 0,\\
	\sum\limits_{r = 0}^3 {{{( - 1)}^r}{q^{\frac{{r(r - 1)}}{2}}}{{\left[ {\begin{array}{*{20}{c}}
						3\\
						r
				\end{array}} \right]}_q}y_3^{3 - r}{y_2}y_3^r}  = 0;\quad \sum\limits_{r = 0}^3 {{{( - 1)}^r}{q^{\frac{{r(r - 1)}}{2} - 2r}}{{\left[ {\begin{array}{*{20}{c}}
						3\\
						r
				\end{array}} \right]}_q}y_2^{3 - r}{y_3}y_2^r}  = 0.
\end{eqnarray*}	
\end{example}

\section{Higher order fundamental relations }

In this section, we keep all the notations as in Sections 2 and 3. The following result can be  viewed as a generalization of Lemma \ref{lemma2}.
\begin{lemma}\label{Lemma1}
	For any  $i,j \in [1,n]$,
\begin{enumerate}
\item when  $b_{ij}<0$, $m\ge -lb_{ij} $, $-b_{ij}\ge l>0$ and $l-1\ge t\ge 0$, we have
	\begin{equation}\label{(lemma4.1)}
		\sum\limits_{s = 0}^{m} {{{\left( {{q^{{d_i}}}} \right)}^{ - s}}\left( {\sum\limits_{r = 0}^s {{D_r}} } \right)y_i^{ m - s}x_i^{ - {b_{ij}}(1 + t) - 1}y_i^s}  = 0,
	\end{equation}
	where ${D_r} = {( - 1)^r}{\left( {{q^{{d_i}}}} \right)^{\frac{{r(r - 1)}}{2}}}{\left[ {\begin{array}{*{20}{c}}
				{m + 1}\\
				r
		\end{array}} \right]_{{q^{{d_i}}}}};$
	
\item when $b_{ij}>0$, $m\ge lb_{ij} $, $b_{ij}\ge l>0$ and $l-1\ge t\ge 0$, we have	
	\begin{equation}\label{(lemma4.2)}
	\sum\limits_{s = 0}^m {{{\left( {{q^{{d_i}}}} \right)}^{s{b_{ij}}(1 + t)}}\left( {\sum\limits_{r = 0}^s {{F_r}} } \right)y_i^{m - s}x_i^{{b_{ij}}(1 + t) - 1}y_i^s} =0,
	\end{equation}
	where ${F_r} = {( - 1)^r}{\left( {{q^{{d_i}}}} \right)^{\frac{{r(r - 1)}}{2} - rm}}{\left[ {\begin{array}{*{20}{c}}
				{m + 1}\\
				r
		\end{array}} \right]_{{q^{{d_i}}}}}.$
\end{enumerate}
\end{lemma}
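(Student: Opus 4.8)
The plan is to replay the proof of Lemma \ref{lemma2} with the free parameter $m$ taking over the role that $-b_{ij}$ (resp. $b_{ij}$) played there: $m$ becomes both the upper summation limit and the top of the $q^{d_i}$-binomial coefficients, while the exponent $-b_{ij}(1+t)-1$ (resp. $b_{ij}(1+t)-1$) of $x_i$ replaces $-b_{ij}-1$ (resp. $b_{ij}-1$). I describe part (1); part (2) is obtained by the same steps, replacing $D_r$, $(q^{d_i})^{-s}$ by $F_r$, $(q^{d_i})^{sb_{ij}(1+t)}$ and invoking Lemma \ref{lemma3}(2) instead of Lemma \ref{lemma3}(1). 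Put $N:=-b_{ij}(1+t)-1$; the hypotheses $-b_{ij}\ge l>0$, $l-1\ge t\ge 0$ force $1\le 1+t\le l\le -b_{ij}$, hence $N\ge 0$, and $m\ge -lb_{ij}\ge -(1+t)b_{ij}=N+1$. The two inequalities $N\ge 0$ and $N+1\le m$ are all the proof needs from the hypotheses.

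First I would rewrite each summand $y_i^{\,m-s}x_i^{\,N}y_i^{\,s}$ ($0\le s\le m$) as a product of \emph{balanced} blocks of the form $y_i^{\,a}x_i^{\,a}$ or $x_i^{\,a}y_i^{\,a}$ with $a\ge 0$, times at most one trailing $y_i$, exactly as in Lemma \ref{lemma2}. For $1\le s\le N+1$ one peels a $y_i$ off the right and regroups as
\[
y_i^{\,m-s}x_i^{\,N}y_i^{\,s}=y_i^{\,m-N-1}\bigl(y_i^{\,N-s+1}x_i^{\,N-s+1}\bigr)\bigl(x_i^{\,s-1}y_i^{\,s-1}\bigr)y_i ,
\]
for $s=0$ one has $y_i^{\,m}x_i^{\,N}=y_i^{\,m-N}\bigl(y_i^{\,N}x_i^{\,N}\bigr)$, and for $N+1<s\le m$ one regroups as $y_i^{\,m-s}\bigl(x_i^{\,N}y_i^{\,N}\bigr)y_i^{\,s-N}$; all exponents are nonnegative precisely because $0\le N\le m-1$. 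Each balanced block is then expanded via formulas (\ref{(ynxn)n}) and (\ref{(xnyn)n}) into a $\mathbb{Z}[q^{\pm d_i/2}]$-combination of the monomials $C_{a,k}\,x_{n+i}^{\,k}$ with $0\le k\le a$, where $C_{l,k}:=\prod_{v\in[1,n]}^{\triangleleft}x_v^{(l-k)[-b_{vi}]_+}\cdot\prod_{v\in[1,n]}^{\triangleleft}x_v^{k[b_{vi}]_+}$. Since $x_{n+i}$ commutes with every $C_{l,k}$, and since the identity $(\Lambda_i)_{in+i}=d_i$ (computed from (\ref{(A)}) as in Lemma \ref{lemma2}) gives $y_ix_{n+i}=q^{d_i}x_{n+i}y_i$, every surviving power of $y_i$ can be pushed to the far right of each term, producing in each case residual monomials $C_{N,k}\,x_{n+i}^{\,k}\,y_i^{\,m-N}$ ($0\le k\le N$) times explicit powers of $q^{d_i}$.

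Next, collecting for each $k$ the terms carrying $x_{n+i}^{\,k}$ and collapsing the resulting products of two $q^{d_i}$-binomials into a single one by Lemma \ref{n-dk}, identity (\ref{(n-dk)}) — exactly as in the proof of Lemma \ref{lemma2} — one finds that all $s$-dependence organizes into the factor $\sum_{s=0}^{m}(q^{d_i})^{-s(1+k)}\sum_{r=0}^{s}D_r$, so that the left-hand side of (\ref{(lemma4.1)}) equals
\[
\sum_{k=0}^{N}\alpha_k\,C_{N,k}\,x_{n+i}^{\,k}\,y_i^{\,m-N}\Bigl(\sum_{s=0}^{m}(q^{d_i})^{-s(1+k)}\sum_{r=0}^{s}D_r\Bigr),
\]
where $\alpha_k$ is the $q^{d_i}$-power and $q^{d_i}$-binomial factor inherited from the corresponding step of Lemma \ref{lemma2} (with $N$, $m$ now in the roles of $-b_{ij}-1$, $-b_{ij}$). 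Since $0\le k\le N\le m-1$, Lemma \ref{lemma3}(1) with $q\mapsto q^{d_i}$, $n\mapsto m$, $k\mapsto 1+k$ gives $\sum_{s=0}^{m}(q^{d_i})^{-s(1+k)}\sum_{r=0}^{s}D_r=0$ for every $k$, which proves (\ref{(lemma4.1)}). For (\ref{(lemma4.2)}) the commuted exponent $(q^{d_i})^{sb_{ij}(1+t)}$ becomes $(q^{d_i})^{s(b_{ij}(1+t)-k)}$ and one applies Lemma \ref{lemma3}(2) with $n\mapsto m$ and $v-k\mapsto b_{ij}(1+t)-k$.

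The main obstacle is the bookkeeping in the regrouping and matching. In Lemma \ref{lemma2} one has $m=-b_{ij}$, so $N=m-1$ and a single uniform regrouping of $y_i^{\,m-s}x_i^{\,m-1}y_i^{\,s}$ suffices; here $m$ may strictly exceed $N+1$, which forces the split of the $s$-sum into $s=0$, $1\le s\le N+1$ and $N+1<s\le m$, and one must check that after the leftover $y_i$-powers are pushed to the right all three ranges yield the same overall $q^{d_i}$-power and the same residual monomial $C_{N,k}\,x_{n+i}^{\,k}\,y_i^{\,m-N}$, so that the partial sums reassemble into the single clean factor $\sum_{s=0}^{m}(q^{d_i})^{-s(1+k)}\sum_{r=0}^{s}D_r$. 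Once that matching is in place, the collapse via (\ref{(n-dk)}) and the vanishing via Lemma \ref{lemma3} are routine, just as in Lemma \ref{lemma2}.
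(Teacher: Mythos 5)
Your proposal follows essentially the same route as the paper's proof: split the range of $s$ at $N=-b_{ij}(1+t)-1$ (resp.\ $b_{ij}(1+t)-1$), regroup each term into balanced blocks $y_i^ax_i^a$ and $x_i^ay_i^a$, expand via \eqref{(ynxn)n}--\eqref{(xnyn)n}, collect the coefficient of each $C_{N,k}x_{n+i}^k$, collapse the product of $q^{d_i}$-binomials with \eqref{(n-dk)}, and finish with Lemma \ref{lemma3}; your verification that the hypotheses give exactly $0\le N\le m-1$ (so that the regroupings and the application of Lemma \ref{lemma3} are legitimate) is the same reduction the paper performs. The only differences are cosmetic bookkeeping choices (where you place the split point and the leftover $y_i^{m-N}$ factor), so the proposal is correct and matches the paper's argument.
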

\begin{proof}
\begin{enumerate}
\item According to  (\ref{(ynxn)n}) and (\ref{(xnyn)n}), we obtain		
	\begin{eqnarray*}
&&	\sum\limits_{s = 0}^m {{{\left( {{q^{{d_i}}}} \right)}^{ - s}} \left({\sum\limits_{r = 0}^s {{D_r}} }\right)  y_i^{m - s}x_i^{ - {b_{ij}}(1 + t) - 1}y_i^s} \\
&	= &\sum\limits_{s = 0}^{ - {b_{ij}}(1 + t) - 1} {{{\left( {{q^{{d_i}}}} \right)}^{ - s}}  \left({\sum\limits_{r = 0}^s {{D_r}} }\right)   y_i^{m + {b_{ij}}(1 + t) + 1}y_i^{ - {b_{ij}}(1 + t) - 1 - s}x_i^{ - {b_{ij}}(1 + t) - 1 - s}x_i^sy_i^s} \\
	&&+ \sum\limits_{s =  - {b_{ij}}(1 + t)}^m {{{\left( {{q^{{d_i}}}} \right)}^{ - s}}  \left({\sum\limits_{r = 0}^s {{D_r}} }\right)   y_i^{m - s}x_i^{ - {b_{ij}}(1 + t) - 1}y_i^{ - {b_{ij}}(1 + t) - 1}y_i^{s + {b_{ij}}(1 + t) + 1}} \\	
&	=& \sum\limits_{s = 0}^{ - {b_{ij}}(1 + t) - 1} {\left( {{{\left( {{q^{{d_i}}}} \right)}^{ - s}}\left( {\sum\limits_{r = 0}^s {{D_r}} } \right)y_i^{m + {b_{ij}}(1 + t) + 1}\sum\limits_{k = 0}^{ - {b_{ij}}(1 + t) - 1 - s} {\left( {{{\left[ {\begin{array}{*{20}{c}}
									{ - {b_{ij}}(1 + t) - 1 - s}\\
									k
							\end{array}} \right]}_{{q^{{d_i}}}}}} \right.} } \right.} \\
	&&\left. {\left. { \cdot {{\left( {{q^{{d_i}}}} \right)}^{\frac{{{k^2}}}{2}}}{C_{ - {b_{ij}}(1 + t) - 1 - s,k}}x_{n + i}^k} \right)\sum\limits_{k = 0}^s {\left( {{{\left[ {\begin{array}{*{20}{c}}
								s\\
								k
						\end{array}} \right]}_{{q^{{d_i}}}}}{{\left( {{q^{{d_i}}}} \right)}^{\frac{{k(k - 2s)}}{2}}}{C_{s,k}}x_{n + i}^k} \right)} } \right)\\
								\end{eqnarray*}		
					\begin{eqnarray*}
&&	+ \sum\limits_{s =  - {b_{ij}}(1 + t)}^m {\left( {{{\left( {{q^{{d_i}}}} \right)}^{ - s}}\left( {\sum\limits_{r = 0}^s {{D_r}} } \right)y_i^{m - s}\sum\limits_{k = 0}^{ - {b_{ij}}(1 + t) - 1} {\left( {{{\left[ {\begin{array}{*{20}{c}}
									{ - {b_{ij}}(1 + t) - 1}\\
									k
							\end{array}} \right]}_{{q^{{d_i}}}}}} \right.} } \right.} \\
&&	\left. {\left. { \cdot {{\left( {{q^{{d_i}}}} \right)}^{\frac{{k(k - 2( - {b_{ij}}(1 + t) - 1))}}{2}}}{C_{ - {b_{ij}}(1 + t) - 1,k}}x_{n + i}^k} \right)y_i^{s + {b_{ij}}(1 + t) + 1}} \right)\\
&	=& \sum\limits_{s = 0}^{ - {b_{ij}}(1 + t) - 1} {\left( {{{\left( {{q^{{d_i}}}} \right)}^{ - s}}\left( {\sum\limits_{r = 0}^s {{D_r}} } \right)y_i^{m + {b_{ij}}(1 + t) + 1}\sum\limits_{k = 0}^{ - {b_{ij}}(1 + t) - 1 - s} {\left( {{{\left[ {\begin{array}{*{20}{c}}
									{ - {b_{ij}}(1 + t) - 1 - s}\\
									k
							\end{array}} \right]}_{{q^{{d_i}}}}}} \right.} } \right.} \\
&&	\left. {\left. { \cdot {{\left( {{q^{{d_i}}}} \right)}^{\frac{{{k^2}}}{2}}}{C_{ - {b_{ij}}(1 + t) - 1 - s,k}}x_{n + i}^k} \right)\sum\limits_{k = 0}^s {\left( {{{\left[ {\begin{array}{*{20}{c}}
								s\\
								k
						\end{array}} \right]}_{{q^{{d_i}}}}}{{\left( {{q^{{d_i}}}} \right)}^{\frac{{k(k - 2s)}}{2}}}{C_{s,k}}x_{n + i}^k} \right)} } \right)\\
	&&+ \sum\limits_{s =  - {b_{ij}}(1 + t)}^m {{{\left( {{q^{{d_i}}}} \right)}^{ - s}}\left( {\sum\limits_{r = 0}^s {{D_r}} } \right)y_i^{m + {b_{ij}}(1 + t) + 1}\sum\limits_{k = 0}^{ - {b_{ij}}(1 + t) - 1} {\left( {{{\left[ {\begin{array}{*{20}{c}}
								{ - {b_{ij}}(1 + t) - 1}\\
								k
						\end{array}} \right]}_{{q^{{d_i}}}}}} \right.} } \\
&&	\left. { \cdot {{\left( {{q^{{d_i}}}} \right)}^{\frac{{{k^2} - 2ks}}{2}}}{C_{ - {b_{ij}}(1 + t) - 1,k}}x_{n + i}^k} \right),
	\end{eqnarray*}
where ${C_{l,k}} = \prod\limits_{v \in [1,n]}^ \triangleleft  {x_v^{(l - k){{\left[ { - {b_{vi}}} \right]}_ + }}}  \cdot \prod\limits_{v \in [1,n]}^ \triangleleft  {x_v^{k{{\left[ {{b_{vi}}} \right]}_ + }}} .$
	
For any $k\in [0,{ - {b_{ij}}(1 + t) - 1}]$, we compute
	\begin{eqnarray*}
	&&	y_i^{m + {b_{ij}}(1 + t) + 1}\left( {{{\left[ {\begin{array}{*{20}{c}}
							{ - {b_{ij}}(1 + t) - 1}\\
							k
					\end{array}} \right]}_{{q^{{d_i}}}}}{{\left( {{q^{{d_i}}}} \right)}^{\frac{{{k^2}}}{2}}}{C_{ - {b_{ij}}(1 + t) - 1,k}}x_{n + i}^k} \right.{\rm{ }}\\
	&&	+ {\left( {{q^{{d_i}}}} \right)^{ - 1}}\left(\sum\limits_{r = 0}^1 {{D_r}} \right)   \left( {{{\left[ {\begin{array}{*{20}{c}}
							{ - {b_{ij}}(1 + t) - 2}\\
							k
					\end{array}} \right]}_{{q^{{d_i}}}}}{{\left( {{q^{{d_i}}}} \right)}^{\frac{{{k^2}}}{2}}}{C_{ - {b_{ij}}(1 + t) - 2,k}}x_{n + i}^k{C_{1,0}}} \right.\\
	&&	\left. { + {{\left[ {\begin{array}{*{20}{c}}
							{ - {b_{ij}}(1 + t) - 2}\\
							{k - 1}
					\end{array}} \right]}_{{q^{{d_i}}}}}{{\left( {{q^{{d_i}}}} \right)}^{\frac{{{{(k - 1)}^2} - 1}}{2}}}{C_{ - {b_{ij}}(1 + t) - 2,k - 1}}x_{n + i}^{k - 1}{C_{1,1}}x_{n + i}^{}} \right)\\
	&&	+  \cdots  + {\left( {{q^{{d_i}}}} \right)^{ - s}}\left(\sum\limits_{r = 0}^s {{D_r}}\right)    \left( {{{\left[ {\begin{array}{*{20}{c}}
							{ - {b_{ij}}(1 + t) - 1 - s}\\
							k
					\end{array}} \right]}_{{q^{{d_i}}}}}{{\left( {{q^{{d_i}}}} \right)}^{\frac{{{k^2}}}{2}}}{C_{ - {b_{ij}}(1 + t) - 1 - s,k}}x_{n + i}^k{C_{s,0}}} \right.\\
	&&	+ {\left[ {\begin{array}{*{20}{c}}
					{ - {b_{ij}}(1 + t) - 1 - s}\\
					{k - 1}
			\end{array}} \right]_{{q^{{d_i}}}}}{\left[ {\begin{array}{*{20}{c}}
					s\\
					1
			\end{array}} \right]_{{q^{{d_i}}}}}{\left( {{q^{{d_i}}}} \right)^{\frac{{{{(k - 1)}^2} + (1 - 2s)}}{2}}}{C_{ - {b_{ij}}(1 + t) - 1 - s,k - 1}}x_{n + i}^{k - 1}{C_{s,1}}x_{n + i}^{}\\
	&&	+  \cdots  + {\left[ {\begin{array}{*{20}{c}}
					{ - {b_{ij}}(1 + t) - 1 - s}\\
					{k - h}
			\end{array}} \right]_{{q^{{d_i}}}}}{\left[ {\begin{array}{*{20}{c}}
					s\\
					h
			\end{array}} \right]_{{q^{{d_i}}}}}{\left( {{q^{{d_i}}}} \right)^{\frac{{{{(k - h)}^2} + h(h - 2s)}}{2}}}{C_{ - {b_{ij}}(1 + t) - 1 - s,k - h}}x_{n + i}^{k - h}\\
	&&	\cdot \left. {{C_{s,h}}x_{n + i}^h +  \cdots  + {{\left[ {\begin{array}{*{20}{c}}
							s\\
							k
					\end{array}} \right]}_{{q^{{d_i}}}}}{{\left( {{q^{{d_i}}}} \right)}^{\frac{{k(k - 2s)}}{2}}}{C_{ - {b_{ij}}(1 + t) - 1 - s,0}}{C_{s,k}}x_{n + i}^k} \right)\\
	&&	+  \cdots  + {\left( {{q^{{d_i}}}} \right)^{{b_{ij}}(1 + t) + 1}}\left( {\sum\limits_{r = 0}^{ - {b_{ij}}(1 + t) - 1} {{D_r}} } \right){\left[ {\begin{array}{*{20}{c}}
					{ - {b_{ij}}(1 + t) - 1}\\
					k
			\end{array}} \right]_{{q^{{d_i}}}}}{\left( {{q^{{d_i}}}} \right)^{\frac{{k(k - 2( - {b_{ij}}(1 + t) - 1))}}{2}}}\\
					\end{eqnarray*}		
		\begin{eqnarray*}
	&&	\left. { \cdot {C_{ - {b_{ij}}(1 + t) - 1,k}}x_{n + i}^k} \right)\\
	&&	+ y_i^{m + {b_{ij}}(1 + t) + 1}\sum\limits_{s =  - {b_{ij}}(1 + t)}^m {\left( {{{\left( {{q^{{d_i}}}} \right)}^{ - s}}\left( {\sum\limits_{r = 0}^s {{D_r}} } \right){{\left[ {\begin{array}{*{20}{c}}
								{ - {b_{ij}}(1 + t) - 1}\\
								k
						\end{array}} \right]}_{{q^{{d_i}}}}}{{\left( {{q^{{d_i}}}} \right)}^{\frac{{{k^2} - 2ks}}{2}}}} \right.} \\
	&&	\left. { \cdot {C_{ - {b_{ij}}(1 + t) - 1,k}}x_{n + i}^k} \right) \\
	&	= &y_i^{m + {b_{ij}}(1 + t) + 1}{C_{ - {b_{ij}}(1 + t) - 1,k}}x_{n + i}^k{\left( {{q^{{d_i}}}} \right)^{\frac{{{k^2}}}{2}}}\left( {{{\left[ {\begin{array}{*{20}{c}}
							{ - {b_{ij}}(1 + t) - 1}\\
							k
					\end{array}} \right]}_{{q^{{d_i}}}}}} \right.\\
	&&	+ {\left( {{q^{{d_i}}}} \right)^{ - 1}}\left(\sum\limits_{r = 0}^1 {{D_r}}\right)    \left( {{{\left[ {\begin{array}{*{20}{c}}
							{ - {b_{ij}}(1 + t) - 2}\\
							k
					\end{array}} \right]}_{{q^{{d_i}}}}}} \right.\left. { + {{\left[ {\begin{array}{*{20}{c}}
							{ - {b_{ij}}(1 + t) - 2}\\
							{k - 1}
					\end{array}} \right]}_{{q^{{d_i}}}}}{{\left( {{q^{{d_i}}}} \right)}^{ - k}}} \right)\\
	&&	+  \cdots  + {\left( {{q^{{d_i}}}} \right)^{ - s}}\left(\sum\limits_{r = 0}^s {{D_r}}  \right)  \left( {{{\left[ {\begin{array}{*{20}{c}}
							{ - {b_{ij}}(1 + t) - 1 - s}\\
							k
					\end{array}} \right]}_{{q^{{d_i}}}}}} \right. \\
				&&+ {\left[ {\begin{array}{*{20}{c}}
					{ - {b_{ij}}(1 + t) - 1 - s}\\
					{k - 1}
			\end{array}} \right]_{{q^{{d_i}}}}}{\left[ {\begin{array}{*{20}{c}}
					s\\
					1
			\end{array}} \right]_{{q^{{d_i}}}}}{\left( {{q^{{d_i}}}} \right)^{ - k + 1 - s}}\\
	&&	\left. { +  \cdots  + {{\left[ {\begin{array}{*{20}{c}}
							{ - {b_{ij}}(1 + t) - 1 - s}\\
							{k - h}
					\end{array}} \right]}_{{q^{{d_i}}}}}{{\left[ {\begin{array}{*{20}{c}}
							s\\
							h
					\end{array}} \right]}_{{q^{{d_i}}}}}{{\left( {{q^{{d_i}}}} \right)}^{h( - k + h - s)}} +  \cdots  + {{\left[ {\begin{array}{*{20}{c}}
							s\\
							k
					\end{array}} \right]}_{{q^{{d_i}}}}}{{\left( {{q^{{d_i}}}} \right)}^{ - ks}}} \right)\\
	&&	+  \cdots  + {\left( {{q^{{d_i}}}} \right)^{{b_{ij}}(1 + t) + 1}}\left(\sum\limits_{r = 0}^{ - {b_{ij}}(1 + t) - 1} {{D_r}} \right)   {\left[ {\begin{array}{*{20}{c}}
					{ - {b_{ij}}(1 + t) - 1}\\
					k
			\end{array}} \right]_{{q^{{d_i}}}}}{\left( {{q^{{d_i}}}} \right)^{ - k( - {b_{ij}}(1 + t) - 1)}}\\
	&&	\left. { + \sum\limits_{s =  - {b_{ij}}(1 + t)}^m {{{\left( {{q^{{d_i}}}} \right)}^{ - s}}\left(\sum\limits_{r = 0}^s {{D_r}} \right)   {{{\left[ {\begin{array}{*{20}{c}}
									{ - {b_{ij}}(1 + t) - 1}\\
									k
							\end{array}} \right]}_{{q^{{d_i}}}}}{{\left( {{q^{{d_i}}}} \right)}^{ - ks}}} } } \right)\\
		&=& y_i^{m + {b_{ij}}(1 + t) + 1}{C_{ - {b_{ij}}(1 + t) - 1,k}}x_{n + i}^k{\left( {{q^{{d_i}}}} \right)^{\frac{{{k^2}}}{2}}}\left( {{{\left[ {\begin{array}{*{20}{c}}
							{ - {b_{ij}}(1 + t) - 1}\\
							k
					\end{array}} \right]}_{{q^{{d_i}}}}}} \right.\\
	&&	+ {\left( {{q^{{d_i}}}} \right)^{ - 1 - k}}\left(\sum\limits_{r = 0}^1 {{D_r}} \right) \left( {{{\left( {{q^{{d_i}}}} \right)}^k}{{\left[ {\begin{array}{*{20}{c}}
							{ - {b_{ij}}(1 + t) - 2}\\
							k
					\end{array}} \right]}_{{q^{{d_i}}}}}} \right.\left. { + {{\left[ {\begin{array}{*{20}{c}}
							{ - {b_{ij}}(1 + t) - 2}\\
							{k - 1}
					\end{array}} \right]}_{{q^{{d_i}}}}}} \right)\\
	&&	+  \cdots  + {\left( {{q^{{d_i}}}} \right)^{ - s - sk}}\left(\sum\limits_{r = 0}^s {{D_r}}\right)   \left( {{{\left( {{q^{{d_i}}}} \right)}^{sk}}{{\left[ {\begin{array}{*{20}{c}}
							{ - {b_{ij}}(1 + t) - 1 - s}\\
							k
					\end{array}} \right]}_{{q^{{d_i}}}}}} \right.\\
	&&	+ {\left[ {\begin{array}{*{20}{c}}
					{ - {b_{ij}}(1 + t) - 1 - s}\\
					{k - 1}
			\end{array}} \right]_{{q^{{d_i}}}}}{\left[ {\begin{array}{*{20}{c}}
					s\\
					1
			\end{array}} \right]_{{q^{{d_i}}}}}{\left( {{q^{{d_i}}}} \right)^{(k - 1)(s - 1)}}\\
	&&	\left. { +  \cdots  + {{\left[ {\begin{array}{*{20}{c}}
							{ - {b_{ij}}(1 + t) - 1 - s}\\
							{k - h}
					\end{array}} \right]}_{{q^{{d_i}}}}}{{\left[ {\begin{array}{*{20}{c}}
							s\\
							h
					\end{array}} \right]}_{{q^{{d_i}}}}}{{\left( {{q^{{d_i}}}} \right)}^{(k - h)(s - h)}} +  \cdots  + {{\left[ {\begin{array}{*{20}{c}}
							s\\
							k
					\end{array}} \right]}_{{q^{{d_i}}}}}} \right)\\
	&&	+  \cdots  + {\left( {{q^{{d_i}}}} \right)^{{b_{ij}}(1 + t) + 1 + k({b_{ij}}(1 + t) + 1)}}\left(\sum\limits_{r = 0}^{ - {b_{ij}}(1 + t) - 1} {{D_r}}\right) {\left[ {\begin{array}{*{20}{c}}
					{ - {b_{ij}}(1 + t) - 1}\\
					k
			\end{array}} \right]_{{q^{{d_i}}}}}\\
					\end{eqnarray*}		
		\begin{eqnarray*}
	&&	\left. { + \sum\limits_{s =  - {b_{ij}}(1 + t)}^m {{{\left( {{q^{{d_i}}}} \right)}^{ - s - ks}}\left(\sum\limits_{r = 0}^s {{D_r}} \right)  {{{\left[ {\begin{array}{*{20}{c}}
									{ - {b_{ij}}(1 + t) - 1}\\
									k
							\end{array}} \right]}_{{q^{{d_i}}}}}} } } \right)\\	
		&	\mathop  = \limits^{(\ref{(n-dk)})} &	y_i^{m + {b_{ij}}(1 + t) + 1}{C_{ - {b_{ij}}(1 + t) - 1,k}}x_{n + i}^k{\left( {{q^{{d_i}}}} \right)^{\frac{{{k^2}}}{2}}}\left( {{{\left[ {\begin{array}{*{20}{c}}
								{ - {b_{ij}}(1 + t) - 1}\\
								k
						\end{array}} \right]}_{{q^{{d_i}}}}}} \right.\\
		&&	+ {\left( {{q^{{d_i}}}} \right)^{ - 1 - k}}\left(\sum\limits_{r = 0}^1 {{D_r}}  \right)  {\left[ {\begin{array}{*{20}{c}}
						{ - {b_{ij}}(1 + t) - 1}\\
						k
				\end{array}} \right]_{{q^{{d_i}}}}} \\&&+  \cdots  + {\left( {{q^{{d_i}}}} \right)^{ - s - sk}}\sum\limits_{r = 0}^s {{D_r}}  \cdot {\left[ {\begin{array}{*{20}{c}}
						{ - {b_{ij}}(1 + t) - 1}\\
						k
				\end{array}} \right]_{{q^{{d_i}}}}}\\
		&&	+  \cdots  + {\left( {{q^{{d_i}}}} \right)^{{b_{ij}}(1 + t) + 1 + k({b_{ij}}(1 + t) + 1)}}\left(\sum\limits_{r = 0}^{ - {b_{ij}}(1 + t) - 1} {{D_r}}\right)   {\left[ {\begin{array}{*{20}{c}}
						{ - {b_{ij}}(1 + t) - 1}\\
						k
				\end{array}} \right]_{{q^{{d_i}}}}}\\
		&&	\left. { + \sum\limits_{s =  - {b_{ij}}(1 + t)}^m {{{\left( {{q^{{d_i}}}} \right)}^{ - s - ks}}\left(\sum\limits_{r = 0}^s {{D_r}} \right)  {{\left[ {\begin{array}{*{20}{c}}
									{ - {b_{ij}}(1 + t) - 1}\\
									k
							\end{array}} \right]}_{{q^{{d_i}}}}}} } \right)\\
		&	=& y_i^{m + {b_{ij}}(1 + t) + 1}{C_{ - {b_{ij}}(1 + t) - 1,k}}x_{n + i}^k{\left( {{q^{{d_i}}}} \right)^{\frac{{{k^2}}}{2}}}{\left[ {\begin{array}{*{20}{c}}
						{ - {b_{ij}}(1 + t) - 1}\\
						k
				\end{array}} \right]_{{q^{{d_i}}}}}\sum\limits_{s = 0}^m {{{\left( {{q^{{d_i}}}} \right)}^{ -s-sk }}\sum\limits_{r = 0}^s {{D_r}} }\\
			&	\mathop  = \limits^{(\ref{(lemma3)})}&0.
	\end{eqnarray*}

\item According to  (\ref{(ynxn)n}) and (\ref{(xnyn)n}), we obtain	
\begin{eqnarray*}
&&	\sum\limits_{s = 0}^m {{{\left( {{q^{{d_i}}}} \right)}^{s{b_{ij}}(1 + t)}}\left( {\sum\limits_{r = 0}^s {{F_r}} } \right)y_i^{m - s}x_i^{{b_{ij}}(1 + t) - 1}y_i^s} \\
&	=& \sum\limits_{s = 0}^{{b_{ij}}(1 + t) - 1} {{{\left( {{q^{{d_i}}}} \right)}^{s{b_{ij}}(1 + t)}}\left( {\sum\limits_{r = 0}^s {{F_r}} } \right)y_i^{m - {b_{ij}}(1 + t) + 1}y_i^{{b_{ij}}(1 + t) - 1 - s}x_i^{{b_{ij}}(1 + t) - 1 - s}x_i^sy_i^s} \\
&&	+ \sum\limits_{s = {b_{ij}}(1 + t)}^m {{{\left( {{q^{{d_i}}}} \right)}^{s{b_{ij}}(1 + t)}}\left( {\sum\limits_{r = 0}^s {{F_r}} } \right)y_i^{m - s}x_i^{{b_{ij}}(1 + t) - 1}y_i^{{b_{ij}}(1 + t) - 1}y_i^{s - {b_{ij}}(1 + t) + 1}} \\
&	=& \sum\limits_{s = 0}^{{b_{ij}}(1 + t) - 1} {\left( {{{\left( {{q^{{d_i}}}} \right)}^{s{b_{ij}}(1 + t)}}\left( {\sum\limits_{r = 0}^s {{F_r}} } \right)y_i^{m - {b_{ij}}(1 + t) + 1}\sum\limits_{k = 0}^{{b_{ij}}(1 + t) - 1 - s} {\left( {{{\left[ {\begin{array}{*{20}{c}}
									{{b_{ij}}(1 + t) - 1 - s}\\
									k
							\end{array}} \right]}_{{q^{{d_i}}}}}} \right.} } \right.} \\
&&	\left. {\left. { \cdot{{\left( {{q^{{d_i}}}} \right)}^{\frac{{{k^2}}}{2}}} {C_{{b_{ij}}(1 + t) - 1 - s,k}}x_{n + i}^k} \right)\sum\limits_{k = 0}^s {\left( {{{\left[ {\begin{array}{*{20}{c}}
								s\\
								k
						\end{array}} \right]}_{{q^{{d_i}}}}}{{\left( {{q^{{d_i}}}} \right)}^{\frac{{k(k - 2s)}}{2}}}{C_{s,k}}x_{n + i}^k} \right)} } \right)\\
&&	+ \sum\limits_{s = {b_{ij}}(1 + t)}^m {\left( {{{\left( {{q^{{d_i}}}} \right)}^{s{b_{ij}}(1 + t)}}\left( {\sum\limits_{r = 0}^s {{F_r}} } \right)y_i^{m - s}\sum\limits_{k = 0}^{{b_{ij}}(1 + t) - 1} {\left( {{{\left[ {\begin{array}{*{20}{c}}
									{{b_{ij}}(1 + t) - 1}\\
									k
							\end{array}} \right]}_{{q^{{d_i}}}}}} \right.} } \right.} \\
									\end{eqnarray*}		
						\begin{eqnarray*}
&&	\left. {\left. { \cdot{{\left( {{q^{{d_i}}}} \right)}^{\frac{{k(k - 2({b_{ij}}(1 + t) - 1))}}{2}}} {C_{{b_{ij}}(1 + t) - 1,k}}x_{n + i}^k} \right)y_i^{s - {b_{ij}}(1 + t) + 1}} \right)\\
&	=& \sum\limits_{s = 0}^{{b_{ij}}(1 + t) - 1} {\left( {{{\left( {{q^{{d_i}}}} \right)}^{s{b_{ij}}(1 + t)}}\left( {\sum\limits_{r = 0}^s {{F_r}} } \right)y_i^{m - {b_{ij}}(1 + t) + 1}\sum\limits_{k = 0}^{{b_{ij}}(1 + t) - 1 - s} {\left( {{{\left[ {\begin{array}{*{20}{c}}
									{{b_{ij}}(1 + t) - 1 - s}\\
									k
							\end{array}} \right]}_{{q^{{d_i}}}}}} \right.} } \right.} \\
&&	\left. {\left. { \cdot{{\left( {{q^{{d_i}}}} \right)}^{\frac{{{k^2}}}{2}}} {C_{{b_{ij}}(1 + t) - 1 - s,k}}x_{n + i}^k} \right)\sum\limits_{k = 0}^s {\left( {{{\left[ {\begin{array}{*{20}{c}}
								s\\
								k
													\end{array}} \right]}_{{q^{{d_i}}}}}{{\left( {{q^{{d_i}}}} \right)}^{\frac{{k(k - 2s)}}{2}}}{C_{s,k}}x_{n + i}^k} \right)} } \right)\\											
&&	+ \sum\limits_{s = {b_{ij}}(1 + t)}^m {{{\left( {{q^{{d_i}}}} \right)}^{s{b_{ij}}(1 + t)}}\left( {\sum\limits_{r = 0}^s {{F_r}} } \right)y_i^{m - {b_{ij}}(1 + t) + 1}\sum\limits_{k = 0}^{{b_{ij}}(1 + t) - 1} {\left( {{{\left[ {\begin{array}{*{20}{c}}
								{ {b_{ij}}(1 + t) - 1}\\
								k
						\end{array}} \right]}_{{q^{{d_i}}}}}} \right.} } \\
&&	\left. { \cdot {{\left( {{q^{{d_i}}}} \right)}^{\frac{{{k^2} - 2ks}}{2}}}{C_{{b_{ij}}(1 + t) - 1,k}}x_{n + i}^k} \right),
\end{eqnarray*}
where ${C_{l,k}} = \prod\limits_{v \in [1,n]}^ \triangleleft  {x_v^{(l - k){{\left[ { - {b_{vi}}} \right]}_ + }}}  \cdot \prod\limits_{v \in [1,n]}^ \triangleleft  {x_v^{k{{\left[ {{b_{vi}}} \right]}_ + }}} .$\\

For any $k\in [0,{  {b_{ij}}(1 + t) - 1}]$, we compute
\begin{eqnarray*}
	&&y_i^{m - {b_{ij}}(1 + t) + 1}\left( {{{\left[ {\begin{array}{*{20}{c}}
						{{b_{ij}}(1 + t) - 1}\\
						k
				\end{array}} \right]}_{{q^{{d_i}}}}}{{\left( {{q^{{d_i}}}} \right)}^{\frac{{{k^2}}}{2}}}{C_{{b_{ij}}(1 + t) - 1,k}}x_{n + i}^k} \right.\\
&&	+ {\left( {{q^{{d_i}}}} \right)^{{b_{ij}}(1 + t)}}\left( {\sum\limits_{r = 0}^1 {{F_r}} } \right)\left( {{{\left[ {\begin{array}{*{20}{c}}
						{{b_{ij}}(1 + t) - 2}\\
						k
				\end{array}} \right]}_{{q^{{d_i}}}}}{{\left( {{q^{{d_i}}}} \right)}^{\frac{{{k^2}}}{2}}}{C_{{b_{ij}}(1 + t) - 2,k}}x_{n + i}^k{C_{1,0}}} \right.\\
&&	\left. { + {{\left[ {\begin{array}{*{20}{c}}
						{{b_{ij}}(1 + t) - 2}\\
						{k - 1}
				\end{array}} \right]}_{{q^{{d_i}}}}}{{\left( {{q^{{d_i}}}} \right)}^{\frac{{{{(k - 1)}^2} - 1}}{2}}}{C_{{b_{ij}}(1 + t) - 2,k - 1}}x_{n + i}^{k - 1}{C_{1,1}}x_{n + i}^{}} \right)	+  \cdots\\
&&  + {\left( {{q^{{d_i}}}} \right)^{s{b_{ij}}(1 + t)}}\left( {\sum\limits_{r = 0}^s {{F_r}} } \right)\left( {{{\left[ {\begin{array}{*{20}{c}}
						{{b_{ij}}(1 + t) - 1 - s}\\
						k
				\end{array}} \right]}_{{q^{{d_i}}}}}{{\left[ {\begin{array}{*{20}{c}}
						s\\
						0
				\end{array}} \right]}_{{q^{{d_i}}}}}{{\left( {{q^{{d_i}}}} \right)}^{\frac{{{k^2}}}{2}}}{C_{{b_{ij}}(1 + t) - 1 - s,k}}x_{n + i}^k} \right.\\
	&&\cdot {C_{s,0}} + {\left[ {\begin{array}{*{20}{c}}
				{{b_{ij}}(1 + t) - 1 - s}\\
				{k - 1}
		\end{array}} \right]_{{q^{{d_i}}}}}{\left[ {\begin{array}{*{20}{c}}
				s\\
				1
		\end{array}} \right]_{{q^{{d_i}}}}}{\left( {{q^{{d_i}}}} \right)^{\frac{{{{(k - 1)}^2} + (1 - 2s)}}{2}}}{C_{{b_{ij}}(1 + t) - 1 - s,k - 1}}x_{n + i}^{k - 1}{C_{s,1}}x_{n + i}^{}\\
&&	+  \cdots  + {\left[ {\begin{array}{*{20}{c}}
				{{b_{ij}}(1 + t) - 1 - s}\\
				{k - h}
		\end{array}} \right]_{{q^{{d_i}}}}}{\left[ {\begin{array}{*{20}{c}}
				s\\
				h
		\end{array}} \right]_{{q^{{d_i}}}}}{\left( {{q^{{d_i}}}} \right)^{\frac{{{{(k - h)}^2} + h(h - 2s)}}{2}}}{C_{{b_{ij}}(1 + t) - 1 - s,k - h}}x_{n + i}^{k - h}{C_{s,h}}x_{n + i}^h\\
&&	\left. { +  \cdots  + {{\left[ {\begin{array}{*{20}{c}}
						{{b_{ij}}(1 + t) - 1 - s}\\
						0
				\end{array}} \right]}_{{q^{{d_i}}}}}{{\left[ {\begin{array}{*{20}{c}}
						s\\
						k
				\end{array}} \right]}_{{q^{{d_i}}}}}{{\left( {{q^{{d_i}}}} \right)}^{\frac{{k(k - 2s)}}{2}}}{C_{{b_{ij}}(1 + t) - 1 - s,0}}{C_{s,k}}x_{n + i}^k} \right)  \\
&&	+  \cdots  + {\left( {{q^{{d_i}}}} \right)^{({b_{ij}}(1 + t) - 1){b_{ij}}(1 + t)}}\left( {\sum\limits_{r = 0}^{{b_{ij}}(1 + t) - 1} {{F_r}} } \right){\left[ {\begin{array}{*{20}{c}}
				{{b_{ij}}(1 + t) - 1}\\
				k
		\end{array}} \right]_{{q^{{d_i}}}}}{\left( {{q^{{d_i}}}} \right)^{\frac{{k(k - 2({b_{ij}}(1 + t) - 1))}}{2}}}\\
&&	\cdot \left. {{C_{{b_{ij}}(1 + t) - 1,k}}x_{n + i}^k} \right)\\
			\end{eqnarray*}		
\begin{eqnarray*}
	&&	+ y_i^{m - {b_{ij}}(1 + t) + 1}\sum\limits_{s = {b_{ij}}(1 + t)}^m {\left( {{{\left( {{q^{{d_i}}}} \right)}^{s{b_{ij}}(1 + k)}}\left( {\sum\limits_{r = 0}^s {{F_r}} } \right){{\left[ {\begin{array}{*{20}{c}}
								{{b_{ij}}(1 + t) - 1}\\
								k
						\end{array}} \right]}_{{q^{{d_i}}}}}{{\left( {{q^{{d_i}}}} \right)}^{\frac{{{k^2} - 2ks}}{2}}}} \right.} \\
	&&	\left. { \cdot {C_{{b_{ij}}(1 + t) - 1,k}}x_{n + i}^k} \right)\\	
			&		= &y_i^{m - {b_{ij}}(1 + t) + 1}{C_{{b_{ij}}(1 + t) - 1,k}}x_{n + i}^k{\left( {{q^{{d_i}}}} \right)^{\frac{{{k^2}}}{2}}}\left( {{{\left[ {\begin{array}{*{20}{c}}
										{{b_{ij}}(1 + t) - 1}\\
										k
								\end{array}} \right]}_{{q^{{d_i}}}}}} \right.\\
				&&	+ {\left( {{q^{{d_i}}}} \right)^{{b_{ij}}(1 + t)}}\left( {\sum\limits_{r = 0}^1 {{F_r}} } \right)\left( {{{\left[ {\begin{array}{*{20}{c}}
										{{b_{ij}}(1 + t) - 2}\\
										k
								\end{array}} \right]}_{{q^{{d_i}}}}}} \right.\left. { + {{\left[ {\begin{array}{*{20}{c}}
										{{b_{ij}}(1 + t) - 2}\\
										{k - 1}
								\end{array}} \right]}_{{q^{{d_i}}}}}{{\left( {{q^{{d_i}}}} \right)}^{ - k}}} \right)\\
				&&	+  \cdots  + {\left( {{q^{{d_i}}}} \right)^{s{b_{ij}}(1 + t)}}\left( {\sum\limits_{r = 0}^s {{F_r}} } \right)\left( {{{\left[ {\begin{array}{*{20}{c}}
										{{b_{ij}}(1 + t) - 1 - s}\\
										k
								\end{array}} \right]}_{{q^{{d_i}}}}}} \right.\\
							&& + {\left[ {\begin{array}{*{20}{c}}
								{{b_{ij}}(1 + t) - 1 - s}\\
								{k - 1}
						\end{array}} \right]_{{q^{{d_i}}}}}{\left[ {\begin{array}{*{20}{c}}
								s\\
								1
						\end{array}} \right]_{{q^{{d_i}}}}}{\left( {{q^{{d_i}}}} \right)^{ - k + 1 - s}}\\
				&&	\left. { +  \cdots  + {{\left[ {\begin{array}{*{20}{c}}
										{{b_{ij}}(1 + t) - 1 - s}\\
										{k - h}
								\end{array}} \right]}_{{q^{{d_i}}}}}{{\left[ {\begin{array}{*{20}{c}}
										s\\
										h
								\end{array}} \right]}_{{q^{{d_i}}}}}{{\left( {{q^{{d_i}}}} \right)}^{h( - k + h - s)}} +  \cdots  + {{\left[ {\begin{array}{*{20}{c}}
										s\\
										k
								\end{array}} \right]}_{{q^{{d_i}}}}}{{\left( {{q^{{d_i}}}} \right)}^{ - ks}}} \right)\\
				&&	+  \cdots  + {\left( {{q^{{d_i}}}} \right)^{({b_{ij}}(1 + t) - 1){b_{ij}}(1 + t)}}\left( {\sum\limits_{r = 0}^{{b_{ij}}(1 + t) - 1} {{F_r}} } \right){\left[ {\begin{array}{*{20}{c}}
								{{b_{ij}}(1 + t) - 1}\\
								k
						\end{array}} \right]_{{q^{{d_i}}}}}{\left( {{q^{{d_i}}}} \right)^{ - k({b_{ij}}(1 + t) - 1)}}\\
				&&	\left. { + \sum\limits_{s = {b_{ij}}(1 + t)}^m {{{\left( {{q^{{d_i}}}} \right)}^{s{b_{ij}}(1 + t)}}\left( {\sum\limits_{r = 0}^s {{F_r}} } \right){{\left[ {\begin{array}{*{20}{c}}
											{{b_{ij}}(1 + t) - 1}\\
											k
									\end{array}} \right]}_{{q^{{d_i}}}}}{{\left( {{q^{{d_i}}}} \right)}^{ - ks}}} } \right)\\
&	=& y_i^{m - {b_{ij}}(1 + t) + 1}{C_{{b_{ij}}(1 + t) - 1,k}}x_{n + i}^k{\left( {{q^{{d_i}}}} \right)^{\frac{{{k^2}}}{2}}}\left( {{{\left[ {\begin{array}{*{20}{c}}
						{{b_{ij}}(1 + t) - 1}\\
						k
				\end{array}} \right]}_{{q^{{d_i}}}}}} \right.\\
&&	+ {\left( {{q^{{d_i}}}} \right)^{{b_{ij}}(1 + t) - k}}\left( {\sum\limits_{r = 0}^1 {{F_r}} } \right)\left( {{{\left( {{q^{{d_i}}}} \right)}^k}{{\left[ {\begin{array}{*{20}{c}}
						{{b_{ij}}(1 + t) - 2}\\
						k
				\end{array}} \right]}_{{q^{{d_i}}}}}} \right.\left. { + {{\left[ {\begin{array}{*{20}{c}}
						{{b_{ij}}(1 + t) - 2}\\
						{k - 1}
				\end{array}} \right]}_{{q^{{d_i}}}}}} \right)\\
&&	+  \cdots  + {\left( {{q^{{d_i}}}} \right)^{s{b_{ij}}(1 + t) - sk}}\left( {\sum\limits_{r = 0}^s {{F_r}} } \right)\left( {{{\left( {{q^{{d_i}}}} \right)}^{sk}}{{\left[ {\begin{array}{*{20}{c}}
						{{b_{ij}}(1 + t) - 1 - s}\\
						k
				\end{array}} \right]}_{{q^{{d_i}}}}}} \right.\\
&&	+ {\left[ {\begin{array}{*{20}{c}}
				{{b_{ij}}(1 + t) - 1 - s}\\
				{k - 1}
		\end{array}} \right]_{{q^{{d_i}}}}}{\left[ {\begin{array}{*{20}{c}}
				s\\
				1
		\end{array}} \right]_{{q^{{d_i}}}}}{\left( {{q^{{d_i}}}} \right)^{(k - 1)(s - 1)}}\\
&&	\left. { +  \cdots  + {{\left[ {\begin{array}{*{20}{c}}
						{{b_{ij}}(1 + t) - 1 - s}\\
						{k - h}
				\end{array}} \right]}_{{q^{{d_i}}}}}{{\left[ {\begin{array}{*{20}{c}}
						s\\
						h
				\end{array}} \right]}_{{q^{{d_i}}}}}{{\left( {{q^{{d_i}}}} \right)}^{(k - h)(s - h)}} +  \cdots  + {{\left[ {\begin{array}{*{20}{c}}
						s\\
						k
				\end{array}} \right]}_{{q^{{d_i}}}}}} \right)\\
&&	+  \cdots  + {\left( {{q^{{d_i}}}} \right)^{({b_{ij}}(1 + t) - 1){b_{ij}}(1 + t) - k({b_{ij}}(1 + t) - 1)}}\left( {\sum\limits_{r = 0}^{{b_{ij}}(1 + t) - 1} {{F_r}} } \right){\left[ {\begin{array}{*{20}{c}}
				{{b_{ij}}(1 + t) - 1}\\
				k
		\end{array}} \right]_{{q^{{d_i}}}}}\\
&&	\left. { + \sum\limits_{s = {b_{ij}}(1 + t)}^m {{{\left( {{q^{{d_i}}}} \right)}^{s{b_{ij}}(1 + t) - ks}}\left( {\sum\limits_{r = 0}^s {{F_r}} } \right){{\left[ {\begin{array}{*{20}{c}}
							{{b_{ij}}(1 + t) - 1}\\
							k
					\end{array}} \right]}_{{q^{{d_i}}}}}} } \right)\\
							\end{eqnarray*}		
				\begin{eqnarray*}
&	\mathop  = \limits^{(\ref{(n-dk)})}& y_i^{m - {b_{ij}}(1 + t) + 1}{C_{{b_{ij}}(1 + t) - 1,k}}x_{n + i}^k{\left( {{q^{{d_i}}}} \right)^{\frac{{{k^2}}}{2}}}\left( {{{\left[ {\begin{array}{*{20}{c}}
						{{b_{ij}}(1 + t) - 1}\\
						k
				\end{array}} \right]}_{{q^{{d_i}}}}}} \right.\\			
&&	+ {\left( {{q^{{d_i}}}} \right)^{{b_{ij}}(1 + t) - k}}\left( {\sum\limits_{r = 0}^1 {{F_r}} } \right){\left[ {\begin{array}{*{20}{c}}
				{{b_{ij}}(1 + t) - 1}\\
				k
		\end{array}} \right]_{{q^{{d_i}}}}} \\&&+  \cdots  + {\left( {{q^{{d_i}}}} \right)^{s{b_{ij}}(1 + t) - sk}}\left( {\sum\limits_{r = 0}^s {{F_r}} } \right){\left[ {\begin{array}{*{20}{c}}
				{{b_{ij}}(1 + t) - 1}\\
				k
		\end{array}} \right]_{{q^{{d_i}}}}}\\		
&&	+  \cdots  + {\left( {{q^{{d_i}}}} \right)^{({b_{ij}}(1 + t) - 1){b_{ij}}(1 + t) - k({b_{ij}}(1 + t) - 1)}}\left( {\sum\limits_{r = 0}^{{b_{ij}}(1 + t) - 1} {{F_r}} } \right){\left[ {\begin{array}{*{20}{c}}
				{{b_{ij}}(1 + t) - 1}\\
				k
		\end{array}} \right]_{{q^{{d_i}}}}}\\
&&	\left. { + \sum\limits_{s = {b_{ij}}(1 + t)}^m {{{\left( {{q^{{d_i}}}} \right)}^{s{b_{ij}}(1 + t) - ks}}\left( {\sum\limits_{r = 0}^s {{F_r}} } \right){{\left[ {\begin{array}{*{20}{c}}
							{{b_{ij}}(1 + t) - 1}\\
							k
					\end{array}} \right]}_{{q^{{d_i}}}}}} } \right)\\
&	=& y_i^{m - {b_{ij}}(1 + t) + 1}{C_{{b_{ij}}(1 + t) - 1,k}}x_{n + i}^k{\left( {{q^{{d_i}}}} \right)^{\frac{{{k^2}}}{2}}}{\left[ {\begin{array}{*{20}{c}}
				{{b_{ij}}(1 + t) - 1}\\
				k
		\end{array}} \right]_{{q^{{d_i}}}}}\sum\limits_{s = 0}^m {{{\left( {{q^{{d_i}}}} \right)}^{s{b_{ij}}(1 + t) - ks}}\sum\limits_{r = 0}^s {{F_r}} } \\
&	=  & y_i^{m - {b_{ij}}(1 + t) + 1}{C_{{b_{ij}}(1 + t) - 1,k}}x_{n + i}^k{\left( {{q^{{d_i}}}} \right)^{\frac{{{k^2}}}{2}}}{\left[ {\begin{array}{*{20}{c}}
				{{b_{ij}}(1 + t) - 1}\\
				k
		\end{array}} \right]_{{q^{{d_i}}}}}\sum\limits_{s = 0}^m {{{\left( {{q^{{d_i}}}} \right)}^{s\left( {{b_{ij}}(1 + t) - k} \right)}}\sum\limits_{r = 0}^s {{F_r}} } \\
	&	\mathop  = \limits^{(\ref{(lemma31)})}&0.
\end{eqnarray*}
\end{enumerate}
This completes the proof.
\end{proof}

The following result shows that quantum cluster algebra $\mathcal{A}_{q}$ satisfies higher order fundamental relations.
\begin{theorem}For any  $i\neq j \in [1,n]$,
\begin{enumerate}	
\item	when $b_{ij}= 0$ and $m \ge  0 $ or $b_{ij}< 0$, $-b_{ij}\ge l>0$ and $m \ge  - l{b_{ij}} $, we have
\begin{eqnarray*}
	\sum\limits_{r = 0}^{ m+1} {{{( - 1)}^r}{{\left( {{q^{{d_i}}}} \right)}^{\frac{{r(r - 1)}}{2}}}{{\left[ {\begin{array}{*{20}{c}}
						{ m + 1}\\
						r
				\end{array}} \right]}_{{q^{{d_i}}}}}y_i^{ m+ 1 - r}y_j^ly_i^r}  &=& 0;\\
\end{eqnarray*}
	
\item when $b_{ij}> 0$, $b_{ij}\ge l>0$ and $m \ge   l{b_{ij}} $, we have
\begin{eqnarray*}
	\sum\limits_{r = 0}^{ m+1} {{{( - 1)}^r}{{\left( {{q^{{d_i}}}} \right)}^{\frac{{r(r - 1)}}{2}-rm}}{{\left[ {\begin{array}{*{20}{c}}
						{ m + 1}\\
						r
				\end{array}} \right]}_{{q^{{d_i}}}}}y_i^{ m+ 1 - r}y_j^ly_i^r}  &=& 0.\\
\end{eqnarray*}
\end{enumerate}
\end{theorem}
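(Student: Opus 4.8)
The plan is to reproduce the proof of Theorem~\ref{serry} with $y_j^l$ in place of $y_j$, invoking Lemma~\ref{Lemma1} at the point where that proof used Lemma~\ref{lemma2}. The case $b_{ij}=0$ is immediate: then $b_{ji}=0$ as well, so $y_iy_j=y_jy_i$ (Theorem~\ref{serry}(1) with $b_{ij}=0$); hence $y_i^{m+1-r}y_j^ly_i^r=y_j^ly_i^{m+1}$ for every $r$, and, writing $D_r$ for the coefficients defined in Lemma~\ref{Lemma1}(1), the alternating sum reduces to $\bigl(\sum_{r=0}^{m+1}D_r\bigr)y_j^ly_i^{m+1}$, which vanishes by \eqref{(q=0)} (applied with $d=m+1$ and $q$ replaced by $q^{d_i}$).

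Now assume $b_{ij}<0$; the case $b_{ij}>0$ is strictly parallel. From the proof of Theorem~\ref{serry}(1) (see \eqref{(y1y2)n}) I would take the relation $y_iy_j=y_jy_i+T$ with $T=Ax_i^{-b_{ij}-1}x_j^{b_{ji}-1}Bx_{n+i}$, where $A\in\mathbb{Z}[q^{\pm 1/2}]$ and $B$ is a monomial in the variables $x_k$ with $k\neq i,j$. Iterating this relation past a power of $y_j$ gives $y_iy_j^l=y_j^ly_i+T_l$ with $T_l=\sum_{p=0}^{l-1}y_j^{l-1-p}\,T\,y_j^{p}$; then, exactly as in \eqref{(yinyj)}, an induction on $s$ yields $y_i^sy_j^l=y_j^ly_i^s+\sum_{t=0}^{s-1}y_i^{s-1-t}T_ly_i^t$. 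Substituting this with $s=m+1-r$ into the alternating sum and regrouping as in Theorem~\ref{serry}, the ``leading'' summands contribute $\bigl(\sum_{r=0}^{m+1}D_r\bigr)y_j^ly_i^{m+1}$, which is $0$ by \eqref{(q=0)}, and one is left with $\sum_{u=0}^{m}\bigl(\sum_{r=0}^{u}D_r\bigr)y_i^{m-u}T_ly_i^u$.

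The crux is to analyse $T_l$. Expanding each factor $y_j$ via \eqref{(er)} into its two monomials, $T_l$ is a $\mathbb{Z}[q^{\pm 1/2}]$-linear combination of monomials each of the form $M'\cdot x_i^{-b_{ij}(1+t)-1}$ with $0\le t\le l-1$: indeed $T$ supplies the factor $x_i^{-b_{ij}-1}$ and one $x_{n+i}$, each of the $l-1$ surrounding $y_j$-monomials supplies $x_i^{0}$ or $x_i^{-b_{ij}}$ and no $x_{n+i}$, so the residual factor $M'$ is free of $x_i$ and carries $x_{n+i}$ to the first power. Because $M'$ is $x_i$-free while the two monomials of $y_i$ have exponent vectors differing by $\mathbf{b}_i$, the compatibility condition \eqref{(dj)} gives $\Lambda(\mathbf{b}_i,\cdot)=0$ against $M'$, so $y_i$ moves past $M'$ by one and the same power of $q$; computing it from \eqref{(ef)} (the only surviving pairing being $d_i$ against the single $x_{n+i}$) shows the cost is $q^{d_i}$. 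Pulling every $M'$ out to the left of the $y_i$-powers thus turns $\sum_{u=0}^{m}\bigl(\sum_{r=0}^{u}D_r\bigr)y_i^{m-u}T_ly_i^u$ into a $\mathbb{Z}[q^{\pm 1/2}]$-combination of the expressions $\sum_{s=0}^{m}(q^{d_i})^{-s}\bigl(\sum_{r=0}^{s}D_r\bigr)y_i^{m-s}x_i^{-b_{ij}(1+t)-1}y_i^s$ for $t=0,\dots,l-1$, each of which vanishes by Lemma~\ref{Lemma1}(1) under the standing hypotheses $-b_{ij}\ge l>0$ and $m\ge -lb_{ij}$. Hence the alternating sum is $0$. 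For $b_{ij}>0$ one runs the same argument starting from \eqref{(y1y2)n11}, with $F_r$ replacing $D_r$ (the leading term now killed by \eqref{(-cr)}), with $x_i$-exponents $b_{ij}(1+t)-1$, and with the $q$-power $(q^{d_i})^{b_{ij}(1+t)}$ furnished by the computation $(\Lambda_i)_{i,n+j}=-d_ib_{ij}$, the conclusion coming from Lemma~\ref{Lemma1}(2).

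I expect the real work to be concentrated in the third paragraph: confirming that the expansion of $T_l$ produces exactly the $x_i$-exponents $-b_{ij}(1+t)-1$ (respectively $b_{ij}(1+t)-1$) for $0\le t\le l-1$ and nothing else, and checking that transporting the inert factors $M'$ through the powers of $y_i$ yields precisely the power of $q^{d_i}$ around which Lemma~\ref{Lemma1} is built, with no leftover monomial-dependent twist. Everything else is formal: it copies the corresponding steps in the proof of Theorem~\ref{serry} together with the identities \eqref{(q=0)}, \eqref{(-cr)} and Lemma~\ref{Lemma1}.
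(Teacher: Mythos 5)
Your proposal is correct and takes essentially the same route as the paper: the commutator identity $y_iy_j=y_jy_i+T$, the iteration $y_i^sy_j^l=y_j^ly_i^s+\sum_{t}y_i^{s-1-t}T_ly_i^t$, the regrouping that kills the leading term via \eqref{(q=0)} (resp.\ \eqref{(-cr)}), and the reduction of the error term to Lemma~\ref{Lemma1} after sorting by the $x_i$-exponents $-b_{ij}(1+t)-1$ (resp.\ $b_{ij}(1+t)-1$) and verifying the uniform commutation cost of $y_i$ past the $x_i$-free, frozen-variable factors. The only difference is cosmetic: you expand $T_l$ directly into quantum-torus monomials, whereas the paper packages the same decomposition through the expansions \eqref{(f_k)} and \eqref{(g_k)}.
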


\begin{proof}
\begin{enumerate}	
\item By using the exchange relations, we have
\begin{eqnarray*}	
{y_i}& =& {q^{ - \frac{{{d_i}}}{2}}}x_i^{ - 1} {\prod\limits_{k \in [1,n]}^ \triangleleft  {x_k^{{{\left[ {{b_{ki}}} \right]}_ + }}} }\cdot {x_{n + i}} +   {\prod\limits_{k \in [1,n]}^ \triangleleft  {x_k^{{{\left[ { - {b_{ki}}} \right]}_ + }}} } \cdot x_i^{ - 1},\\
	{y_j} &= &{q^{ - \frac{{{d_j}}}{2}}}x_j^{ - 1}  {\prod\limits_{k \in [1,n]}^ \triangleleft  {x_k^{{{\left[ {{b_{kj}}} \right]}_ + }}} } \cdot{x_{n + j}} +   {\prod\limits_{k \in [1,n]}^ \triangleleft  {x_k^{{{\left[ { - {b_{kj}}} \right]}_ + }}} }\cdot x_j^{ - 1}.
\end{eqnarray*}	

	According to  (\ref{(y1y2)n}), we have
	\begin{eqnarray}\label{(yiyj)}
	{y_i}{y_j} = {y_j}{y_i} + Ax_i^{-{b_{ij}} - 1}x_j^{{b_{ji}} - 1}B{x_{n + i}} ,
\end{eqnarray}
where $A = {q^{ - \frac{{{d_i}}}{2} - {d_i}{b_{ij}}}} - {q^{ - \frac{{{d_i}}}{2}}}$  and $B =   {\prod\limits_{k \in [1,n],k \ne j}^ \triangleleft  {x_k^{{{\left[ { - {b_{ki}}} \right]}_ + }}} } \cdot {\prod\limits_{k \in [1,n],k \ne i}^ \triangleleft  {x_k^{{{\left[ {{b_{kj}}} \right]}_ + }}} }  $.	

(i) When $b_{ij}=0$, we have
\[\sum\limits_{r = 0}^1 {{{( - 1)}^r}{{\left( {{q^{{d_i}}}} \right)}^{\frac{{r(r - 1)}}{2}}}{{\left[ {\begin{array}{*{20}{c}}
					1\\
					r
			\end{array}} \right]}_{{q^{{d_i}}}}}y_i^{1 - r}y_j^ly_i^r}  =  y_i^{}y_j^l - y_i^ly_j^{} = 0.\]

(ii) When $b_{ij}\ne 0$, note that,	
for any $s\geq 1$, we have
	\begin{eqnarray}\label{(y1y2)n1}
		{y_i}y_j^s&	\mathop  = \limits^{(\ref{(yiyj)})} &\left( {{y_j}{y_i} + Ax_i^{-{b_{ij}} - 1}x_j^{{b_{ji}} - 1}B{x_{n + i}}} \right)y_j^{s - 1}\nonumber\\
		&	=& {y_j}{y_i}y_j^{s - 1} + Ax_i^{-{b_{ij}} - 1}x_j^{{b_{ji}} - 1}B{x_{n + i}}y_j^{s - 1}\nonumber\\
		&	\mathop  = \limits^{(\ref{(yiyj)})} & {y_j}\left( {{y_j}{y_i} + Ax_i^{-{b_{ij}} - 1}x_j^{{b_{ji}} - 1}B{x_{n + i}}} \right)y_j^{s - 2} + Ax_i^{-{b_{ij}} - 1}x_j^{{b_{ji}} - 1}B{x_{n + i}}y_j^{s - 1}\nonumber\\
		&	=& y_j^2{y_i}y_j^{s - 2} + A{y_j}x_i^{-{b_{ij}} - 1}x_j^{{b_{ji}} - 1}B{x_{n + i}}y_j^{s - 2} + Ax_i^{-{b_{ij}} - 1}x_j^{{b_{ji}} - 1}B{x_{n + i}}y_j^{s - 1}\nonumber\\
	&	\mathop  = \limits^{(\ref{(yiyj)})} &y_j^s{y_i} + \sum\limits_{t = 1}^s {Ay_j^{t-1}x_i^{-{b_{ij}}- 1}x_j^{{b_{ji}} - 1}B{x_{n + i}}y_j^{s- t}}
	\end{eqnarray}
and for any $s\geq 1$, $k\geq 1$, we have
	\begin{eqnarray}\label{(y1y2)n2}
	y_i^ky_j^s& 		\mathop  = \limits^{(\ref{(y1y2)n1})} &y_i^{k - 1}\left( {y_j^s{y_i} + \sum\limits_{t = 1}^s {Ay_j^{t - 1}x_i^{ - {b_{ij}} - 1}x_j^{{b_{ji}} - 1}B{x_{n + i}}y_j^{s - t}} } \right)\nonumber\\
&	=& y_i^{k - 1}y_j^s{y_i} + \sum\limits_{t = 1}^s {Ay_i^{k - 1}y_j^{t - 1}x_i^{ - {b_{ij}} - 1}x_j^{{b_{ji}} - 1}B{x_{n + i}}y_j^{s - t}} \nonumber\\
& 		\mathop  = \limits^{(\ref{(y1y2)n1})} & y_i^{k - 2}y_j^sy_i^2 + \sum\limits_{t = 1}^s {Ay_i^{k - 1}y_j^{t - 1}x_i^{ - {b_{ij}} - 1}x_j^{{b_{ji}} - 1}B{x_{n + i}}y_j^{s - t}}\nonumber\\
&&  + \sum\limits_{t = 1}^s {Ay_i^{k - 2}y_j^{t - 1}x_i^{ - {b_{ij}} - 1}x_j^{{b_{ji}} - 1}B{x_{n + i}}y_j^{s - t}{y_i}}\nonumber \\
& 		\mathop  = \limits^{(\ref{(y1y2)n1})} &y_j^sy_i^k + \sum\limits_{p = 1}^k {\sum\limits_{t = 1}^s {Ay_i^{k - p}y_j^{t - 1}x_i^{ - {b_{ij}} - 1}x_j^{{b_{ji}} - 1}B{x_{n + i}}y_j^{s - t}y_i^{p - 1}} }.
	\end{eqnarray}	
	
We calculate
	\begin{eqnarray*}
	&&	\sum\limits_{r = 0}^{ m + 1} {{{( - 1)}^r}{{\left( {{q^{{d_i}}}} \right)}^{\frac{{r(r - 1)}}{2} }}{{\left[ {\begin{array}{*{20}{c}}
							{ m + 1}\\
							r
					\end{array}} \right]}_{{q^{{d_i}}}}}y_i^{ m + 1 - r}y_j^ly_i^r}  = \sum\limits_{r = 0}^{ m + 1} {{D_r}y_i^{ m+ 1 - r}y_j^ly_i^r} \\
	&	=& y_i^{ m + 1}y_j^l + {D_1}y_i^{ m}y_j^l{y_i} + {D_2}y_i^{ m - 1}y_j^ly_i^2 +  \cdots  + {D_{  m}}y_i^{}y_j^ly_i^{ m} + {D_{m + 1}}y_j^ly_i^{ m + 1}\\
	&	\mathop  = \limits^{(\ref{(y1y2)n2})}& y_j^ly_i^{ m + 1} + \sum\limits_{p = 1}^{ m + 1} {\sum\limits_{t = 1}^l {{A}y_i^{ m + 1 - p}y_j^{t - 1}x_i^{ - {b_{ij}} - 1}x_j^{{b_{ji}} - 1}B{x_{n + i}}y_j^{l- t}y_i^{p - 1}} } \\
				\end{eqnarray*}		
	\begin{eqnarray*}
	&&	+ {D_1}y_j^ly_i^{ m + 1} + {D_1}\sum\limits_{p = 1}^{ m} {\sum\limits_{t = 1}^l {{A}y_i^{m - p}y_j^{t - 1}x_i^{ - {b_{ij}} - 1}x_j^{{b_{ji}} - 1}B{x_{n + i}}y_j^{l - t}y_i^{p - 1}{y_i}} } \\	
	&&	+ {D_2}y_j^ly_i^{ m + 1} + {D_2}\sum\limits_{p = 1}^{ m - 1} {\sum\limits_{t = 1}^l {{A}y_i^{ m - 1 - p}y_j^{t - 1}x_i^{ - {b_{ij}} - 1}x_j^{{b_{ji}} - 1}B{x_{n + i}}y_j^{l - t}y_i^{p - 1}y_i^2} } \\
	&&	+  \cdots  + {D_{ m}}y_j^ly_i^{ m + 1} + {D_{ m}}\sum\limits_{t = 1}^l {{A}y_j^{t - 1}x_i^{ - {b_{ij}} - 1}x_j^{{b_{ji}} - 1}B{x_{n + i}}y_j^{l - t}y_i^{ m}}  + {D_{ m + 1}}y_j^ly_i^{ m + 1}\\
	&	=& \left( {\sum\limits_{r = 0}^{m+ 1} {{D_r}} } \right)y_j^ly_i^{ m + 1} + \sum\limits_{t = 1}^l {{A}y_i^{  m}y_j^{t - 1}x_i^{ - {b_{ij}} - 1}x_j^{{b_{ji}} - 1}B{x_{n + i}}y_j^{l - t}} \\
	&&	+ \left( {{D_0} + {D_1}} \right)\sum\limits_{t = 1}^l {{A}y_i^{ m - 1}y_j^{t - 1}x_i^{ - {b_{ij}} - 1}x_j^{{b_{ji}} - 1}B{x_{n + i}}y_j^{l - t}{y_i}} \\
	&&	+ \left( {\sum\limits_{r = 0}^2 {{D_r}} } \right)\sum\limits_{t = 1}^l {{A}y_i^{ m- 2}y_j^{t - 1}x_i^{ - {b_{ij}} - 1}x_j^{{b_{ji}} - 1}B{x_{n + i}}y_j^{l - t}y_i^2} \\
	&&	+  \cdots  + \left( {\sum\limits_{r = 0}^{ m} {{D_r}} } \right)\sum\limits_{t = 1}^l {{A }y_j^{t - 1}x_i^{ - {b_{ij}} - 1}x_j^{{b_{ji}} - 1}B{x_{n + i}}y_j^{l - t}y_i^{ m}} \\
	&	=& \left( {\sum\limits_{r = 0}^{m + 1} {{D_r}} } \right)y_j^ly_i^{ m + 1} + \sum\limits_{s = 0}^{ m} {\left( {\left( {\sum\limits_{r = 0}^s {{D_r}} } \right)\sum\limits_{t = 1}^l {{A}y_i^{ m- s}y_j^{t - 1}x_i^{ - {b_{ij}} - 1}x_j^{{b_{ji}} - 1}B{x_{n + i}}y_j^{l- t}y_i^s} } \right)},
	\end{eqnarray*}	
where ${D_r} = {{{( - 1)}^r}{{\left( {{q^{{d_i}}}} \right)}^{\frac{{r(r - 1)}}{2} }}{{\left[ {\begin{array}{*{20}{c}}
						{ m + 1}\\
						r
				\end{array}} \right]}_{{q^{{d_i}}}}}}.$
		
Note that
\[\sum\limits_{r = 0}^{ m + 1} {{D_r}}  = \sum\limits_{r = 0}^{ m + 1} {{{( - 1)}^r}{{\left( {{q^{{d_i}}}} \right)}^{\frac{{r(r - 1)}}{2} }}{{\left[ {\begin{array}{*{20}{c}}
					{ m + 1}\\
					r
			\end{array}} \right]}_{{q^{{d_i}}}}}}   \mathop  = \limits^{(\ref{(q=0)})} 0.\]
	
On the other hand,	according to (\ref{(A)}), we have
		\begin{eqnarray*}
	{\left( {{\Lambda _j}} \right)_{jn + i}} &=&  - {\lambda _{jn + i}} + \sum\limits_{t = 1}^{2n} {{{[{b_{tj}}]}_ + }{\lambda _{tn + i}}}  = \sum\limits_{t = 1}^n {{{[{b_{tj}}]}_ + }{\lambda _{tn + i}}}  + \sum\limits_{t = n + 1}^{2n} {{{[{b_{tj}}]}_ + }{\lambda _{tn + i}}} \\
	&	=& {[{b_{ij}}]_ + }{\lambda _{in + i}} + {[{b_{n + jj}}]_ + }{\lambda _{n + jn + i}} = {\lambda _{n + jn + i}} =  - {d_j}{b_{ji}},\\
		{\left( {{\Lambda _i}} \right)_{in + j}} &= & - {\lambda _{i + nj}} + \sum\limits_{t = 1}^{2n} {{{[{b_{ti}}]}_ + }{\lambda _{tn + j}}}  = \sum\limits_{t = 1}^n {{{[{b_{ti}}]}_ + }{\lambda _{tn + j}}}  + \sum\limits_{t = n + 1}^{2n} {{{[{b_{ti}}]}_ + }{\lambda _{tn + j}}} \\
	&	=& {[{b_{ji}}]_ + }{\lambda _{jn + j}} + {[{b_{n + ii}}]_ + }{\lambda _{n + in + j}} =  - {d_j}{b_{ji}}- {d_i}{b_{ij}} = 0.
\end{eqnarray*}	

Thus, we obtain
	\begin{eqnarray*}
	&&\sum\limits_{s = 0}^{ m} {\left( {\left( {\sum\limits_{r = 0}^s {{D_r}} } \right)\sum\limits_{t = 1}^l {{A}y_i^{ m - s}y_j^{t - 1}x_i^{ - {b_{ij}} - 1}x_j^{{b_{ji}} - 1}B{x_{n + i}}y_j^{l- t}y_i^s} } \right)} \\
				\end{eqnarray*}		
	\begin{eqnarray*}
&	=& {A}\sum\limits_{s = 0}^{m} {\left( {\left( {\sum\limits_{r = 0}^s {{D_r}} } \right)y_i^{ m - s}x_i^{ - {b_{ij}} - 1}\left( {\sum\limits_{t = 1}^l {y_j^{t - 1}x_j^{{b_{ji}} - 1}{x_{n + i}}y_j^{l- t}} } \right)y_i^s} \right)} B\\
&	=& {A}\sum\limits_{s = 0}^{ m} {\left( {{{\left( {{q^{{d_i}}}} \right)}^{ - s}}\left( {\sum\limits_{r = 0}^s {{D_r}} } \right)y_i^{ m - s}x_i^{ - {b_{ij}} - 1}\left( {\sum\limits_{t = 1}^l {{{\left( {{q^{{d_i}}}} \right)}^{ - {b_{ij}}(l- t)}}y_j^{t - 1}x_j^{{b_{ji}} - 1}y_j^{l - t}} } \right)y_i^s} \right)} B{x_{n + i}}.
\end{eqnarray*}

We only need to prove $$\sum\limits_{s = 0}^{m} {\left( {{{\left( {{q^{{d_i}}}} \right)}^{ - s}}\left( {\sum\limits_{r = 0}^s {{D_r}} } \right)y_i^{ m - s}x_i^{ - {b_{ij}} - 1}\left( {\sum\limits_{t = 1}^l {{{\left( {{q^{{d_i}}}} \right)}^{ - {b_{ij}}(l - t)}}y_j^{t - 1}x_j^{{b_{ji}} - 1}y_j^{l - t}} } \right)y_i^s} \right)}  = 0.$$

Now, we calculate ${\sum\limits_{t = 1}^l {{{\left( {{q^{{d_i}}}} \right)}^{ - {b_{ij}}(l - t)}}y_j^{t - 1}x_j^{{b_{ji}} - 1}y_j^{l - t}} }.$ 	According to (\ref{(xnyn)n}) and (\ref{(ynxn)n}), we have
	\begin{eqnarray*}
&&	\sum\limits_{t = 1}^l {{{\left( {{q^{{d_i}}}} \right)}^{ - {b_{ij}}(l - t)}}y_j^{t - 1}x_j^{{b_{ji}} - 1}y_j^{l - t}}  = \sum\limits_{t = 1}^l {{{\left( {{q^{{d_i}}}} \right)}^{ - {b_{ij}}(l - t)}}y_j^{t - 1}x_j^{t - 1}x_j^{{b_{ji}} - l}x_j^{l - t}y_j^{l - t}} \\
&	=& \sum\limits_{t = 1}^l {\left( {{{\left( {{q^{{d_i}}}} \right)}^{ - {b_{ij}}(l - t)}}\left( {\sum\limits_{k = 0}^{t - 1} {{{\left[ {\begin{array}{*{20}{c}}
									{t - 1}\\
									k
							\end{array}} \right]}_{{q^{{d_j}}}}}{{\left( {{q^{{d_j}}}} \right)}^{\frac{{{k^2}}}{2}}}\prod\limits_{v \in [1,n]}^ \triangleleft  {x_v^{(t - 1 - k){{\left[ { - {b_{vj}}} \right]}_ + }}}  \cdot \prod\limits_{v \in [1,n]}^ \triangleleft  {x_v^{k{{\left[ {{b_{vj}}} \right]}_ + }}}  \cdot x_{n + j}^k} } \right)} \right.} \\
&&	\left. { \cdot x_j^{{b_{ji}} - l} \left( {\sum\limits_{k = 0}^{l - t} {{{\left[ {\begin{array}{*{20}{c}}
								{l - t}\\
								k
						\end{array}} \right]}_{{q^{{d_j}}}}}{{\left( {{q^{{d_j}}}} \right)}^{\frac{{{k(k-2t)}}}{2}}}\prod\limits_{v \in [1,n]}^ \triangleleft  {x_v^{(l - t - k){{\left[ { - {b_{vj}}} \right]}_ + }}}  \cdot \prod\limits_{v \in [1,n]}^ \triangleleft  {x_v^{k{{\left[ {{b_{vj}}} \right]}_ + }}}  \cdot x_{n + j}^k} } \right)} \right)\\											
&	=& 	\sum\limits_{t = 1}^l {\left( {{{\left( {{q^{{d_i}}}} \right)}^{ - {b_{ij}}(l - t)}}\left( {\sum\limits_{k = 0}^{t - 1} {{{\left[ {\begin{array}{*{20}{c}}
									{t - 1}\\
									k
							\end{array}} \right]}_{{q^{{d_j}}}}}{{\left( {{q^{{d_j}}}} \right)}^{\frac{{{k^2}}}{2}}}x_i^{ - {b_{ij}}(t - 1 - k)}\prod\limits_{v \in [1,n],v \ne i}^ \triangleleft  {x_v^{(t - 1 - k){{\left[ { - {b_{vj}}} \right]}_ + }}} } } \right.} \right.} \\
	&&\left. { \cdot \prod\limits_{v \in [1,n]}^ \triangleleft  {x_v^{k{{\left[ {{b_{vj}}} \right]}_ + }}}  \cdot x_{n + j}^k} \right)x_j^{{b_{ji}} - l}\left( {\sum\limits_{k = 0}^{l - t} {{{\left[ {\begin{array}{*{20}{c}}
							{l - t}\\
							k
					\end{array}} \right]}_{{q^{{d_j}}}}}{{\left( {{q^{{d_j}}}} \right)}^{\frac{{k(k - 2t)}}{2}}}x_i^{ - {b_{ij}}(l - t - k)}} } \right.\\
&&	\left. { \cdot \left. {\prod\limits_{v \in [1,n],v \ne i}^ \triangleleft  {x_v^{(l - t - k){{\left[ { - {b_{vj}}} \right]}_ + }}}  \cdot \prod\limits_{v \in [1,n]}^ \triangleleft  {x_v^{k{{\left[ {{b_{vj}}} \right]}_ + }}}  \cdot x_{n + j}^k} \right)} \right).
\end{eqnarray*}

 Thus, we have
\begin{equation}\label{(f_k)}
\sum\limits_{t = 1}^l {{{\left( {{q^{{d_i}}}} \right)}^{ - {b_{ij}}(l - t)}}y_j^{t - 1}x_j^{{b_{ji}} - 1}y_j^{l - t}}  = \sum\limits_{k = 0}^{l - 1} {x_i^{ - {b_{ij}}k}{f_k}} ,
\end{equation}
where ${f_k} \in \mathbb{ZP}\left[ {{x_1}, \cdots ,{x_{i - 1}},{x_{i + 1}}, \cdots ,{x_n},{x_{n + j}}} \right].$

So, we obtain
	\begin{eqnarray*}
&&	\sum\limits_{s = 0}^m {\left( {{{\left( {{q^{{d_i}}}} \right)}^{ - s}}\left( {\sum\limits_{r = 0}^s {{D_r}} } \right)y_i^{m - s}x_i^{ - {b_{ij}} - 1}\left( {\sum\limits_{t = 1}^l {{{\left( {{q^{{d_i}}}} \right)}^{ - {b_{ij}}(l - t)}}y_j^{t - 1}x_j^{{b_{ji}} - 1}y_j^{l - t}} } \right)y_i^s} \right)} \\
&	= &\sum\limits_{s = 0}^m {\left( {{{\left( {{q^{{d_i}}}} \right)}^{ - s}}\left( {\sum\limits_{r = 0}^s {{D_r}} } \right)y_i^{m - s}x_i^{ - {b_{ij}} - 1}\left( {\sum\limits_{k = 0}^{l - 1} {x_i^{ - {b_{ij}}k}{f_k}} } \right)y_i^s} \right)} \\
			\end{eqnarray*}		
\begin{eqnarray*}
&	=& \sum\limits_{s = 0}^m {\sum\limits_{k = 0}^{l - 1} {{{\left( {{q^{{d_i}}}} \right)}^{ - s}}\left( {\sum\limits_{r = 0}^s {{D_r}} } \right)y_i^{m - s}x_i^{ - {b_{ij}} - 1}x_i^{ - {b_{ij}}k}y_i^s{f_k}} } \\
	&=& \sum\limits_{k = 0}^{l - 1} {\left( {\sum\limits_{s = 0}^m {{{\left( {{q^{{d_i}}}} \right)}^{ - s}}\left( {\sum\limits_{r = 0}^s {{D_r}} } \right)y_i^{m - s}x_i^{ - {b_{ij}}(1 + k) - 1}y_i^s} } \right){f_k}} .
\end{eqnarray*}

For any $k \in [0,l-1]$,  we have  $$\sum\limits_{s = 0}^{ m} {{{\left( {{q^{{d_i}}}} \right)}^{ - s}}\left( {\sum\limits_{r = 0}^s {{D_r}} } \right)y_i^{ m - s}x_i^{ - {b_{ij}}(1 + k) - 1}y_i^s} 		\mathop  = \limits^{(\ref{(lemma4.1)})}0.$$

\item By using the exchange relations, we have	
	
\begin{eqnarray*}	{y_i}& =& {q^{ - \frac{{{d_i}}}{2}}}x_i^{ - 1}\prod\limits_{k \in [1,n],k \ne j}^ \triangleleft  {x_k^{{{\left[ {{b_{ki}}} \right]}_ + }}}  \cdot {x_{n + i}} + x_i^{ - 1}x_j^{-{b_{ji}}}\prod\limits_{k \in [1,n],k \ne j}^ \triangleleft  {x_k^{{{\left[ { - {b_{ki}}} \right]}_ + }}},\\
{y_j}& =& {q^{ - \frac{{{d_j}}}{2}}}x_j^{ - 1}x_i^{{b_{ij}}}\prod\limits_{k \in [1,n],k \ne i}^ \triangleleft  {x_k^{{{\left[ {{b_{kj}}} \right]}_ + }}}  \cdot {x_{n + j}} + x_j^{ - 1}\prod\limits_{k \in [1,n],k \ne i}^ \triangleleft  {x_k^{{{\left[ { - {b_{kj}}} \right]}_ + }}}. 	\end{eqnarray*}

	According to  (\ref{(y1y2)n11}), we have
\begin{eqnarray}\label{(yiyj)1}
	{y_i}{y_j} = {y_j}{y_i} + Cx_i^{{b_{ij}} - 1}x_j^{-{b_{ji}} - 1}D{x_{n + j}},
\end{eqnarray}
where $C = {{q^{ - \frac{{{d_j}}}{2}}} - {q^{ - \frac{{{d_j}}}{2} - {d_j}{b_{ji}}}}}$  and $D =  \prod\limits_{k \in [1,n],k \ne j}^ \triangleleft  {x_k^{{{\left[ { - {b_{ki}}} \right]}_ + }}}  \cdot \prod\limits_{k \in [1,n],k \ne i}^ \triangleleft  {x_k^{{{\left[ {{b_{kj}}} \right]}_ + }}} $.

Thus, for any $s\geq 1$, we have
	\begin{eqnarray*}
		{y_i}y_j^s &= &\left( {{y_j}{y_i} + Cx_i^{{b_{ij}} - 1}x_j^{ - {b_{ji}} - 1}D{x_{n + j}}} \right)y_j^{s - 1}\nonumber\\
	&	=& {y_j}{y_i}y_j^{s - 1} + Cx_i^{{b_{ij}} - 1}x_j^{ - {b_{ji}} - 1}D{x_{n + j}}y_j^{s - 1}\nonumber\\
	&	=& {y_j}\left( {{y_j}{y_i} + Cx_i^{{b_{ij}} - 1}x_j^{ - {b_{ji}} - 1}D{x_{n + j}}} \right)y_j^{s - 2} + Cx_i^{{b_{ij}} - 1}x_j^{ - {b_{ji}} - 1}D{x_{n + j}}y_j^{s - 1}\nonumber\\
	&	=& y_j^2{y_i}y_j^{s - 2} + C{y_j}x_i^{{b_{ij}} - 1}x_j^{ - {b_{ji}} - 1}D{x_{n + j}}y_j^{s - 2} + Cx_i^{{b_{ij}} - 1}x_j^{ - {b_{ji}} - 1}D{x_{n + j}}y_j^{s - 1}\nonumber\\
	&	=& y_j^s{y_i} + \sum\limits_{t = 1}^s {Cy_j^{t - 1}x_i^{{b_{ij}} - 1}x_j^{ - {b_{ji}} - 1}D{x_{n + j}}y_j^{s - t}} .
	\end{eqnarray*}	
	and for any $s\geq 1$, $k\geq 1$ we have
	\begin{eqnarray*}
		y_i^ky_j^s& =& y_i^{k - 1}\left( {y_j^s{y_i} + \sum\limits_{t = 1}^s {Cy_j^{t - 1}x_i^{{b_{ij}} - 1}x_j^{ - {b_{ji}} - 1}D{x_{n + j}}y_j^{s - t}} } \right)\nonumber\\
	&	=& y_i^{k - 1}y_j^s{y_i} + \sum\limits_{t = 1}^s {Cy_i^{k - 1}y_j^{t - 1}x_i^{{b_{ij}} - 1}x_j^{ - {b_{ji}} - 1}D{x_{n + j}}y_j^{s - t}} \nonumber\\
	&	=& y_i^{k - 2}y_j^sy_i^2 + \sum\limits_{t = 1}^s {Cy_i^{k - 1}y_j^{t - 1}x_i^{{b_{ij}} - 1}x_j^{ - {b_{ji}} - 1}D{x_{n + j}}y_j^{s - t}} \nonumber \\
					\end{eqnarray*}		
	\begin{eqnarray}\label{(yiyjks)}
	&&+ \sum\limits_{t = 1}^s {Cy_i^{k - 2}y_j^{t - 1}x_i^{{b_{ij}} - 1}x_j^{ - {b_{ji}} - 1}D{x_{n + j}}y_j^{s - t}{y_i}} \nonumber\\
	&	=& y_j^sy_i^k + \sum\limits_{p = 1}^k {\sum\limits_{t = 1}^s {Cy_i^{k - p}y_j^{t - 1}x_i^{{b_{ij}} - 1}x_j^{ - {b_{ji}} - 1}D{x_{n + j}}y_j^{s - t}y_i^{p - 1}} } .
	\end{eqnarray}

Now, we calculate
	\begin{eqnarray*}
	&&	\sum\limits_{r = 0}^{m + 1} {{{( - 1)}^r}{{\left( {{q^{{d_i}}}} \right)}^{\frac{{r(r - 1)}}{2} - rm}}{{\left[ {\begin{array}{*{20}{c}}
							{m + 1}\\
							r
					\end{array}} \right]}_{{q^{{d_i}}}}}y_i^{m + 1 - r}y_j^ly_i^r} \\
	&	=& y_i^{m + 1}y_j^l + {F_1}y_i^my_j^l{y_i} + {F_2}y_i^{m - 1}y_j^ly_i^2 +  \cdots  + {F_m}y_i^{}y_j^ly_i^m + {F_{m + 1}}y_j^ly_i^{m + 1}\\
	&	\mathop  = \limits^{(\ref{(yiyjks)})} &y_j^ly_i^{m + 1} + \sum\limits_{p = 1}^{m + 1} {\sum\limits_{t = 1}^l {Cy_i^{m + 1 - p}y_j^{t - 1}x_i^{{b_{ij}} - 1}x_j^{ - {b_{ji}} - 1}D{x_{n + j}}y_j^{l - t}y_i^{p - 1}} } \\
	&&	+ {F_1}y_j^ly_i^{m + 1} + {F_1}\sum\limits_{p = 1}^m {\sum\limits_{t = 1}^l {Cy_i^{m - p}y_j^{t - 1}x_i^{{b_{ij}} - 1}x_j^{ - {b_{ji}} - 1}D{x_{n + j}}y_j^{l - t}y_i^{p - 1}{y_i}} } \\
	&&	+ {F_2}y_j^ly_i^{m + 1} + {F_2}\sum\limits_{p = 1}^{m - 1} {\sum\limits_{t = 1}^l {Cy_i^{m - 1 - p}y_j^{t - 1}x_i^{{b_{ij}} - 1}x_j^{ - {b_{ji}} - 1}D{x_{n + j}}y_j^{l - t}y_i^{p - 1}y_i^2} } \\	
	&&	+  \cdots  + {F_m}y_j^ly_i^{m + 1} + {F_m}\sum\limits_{t = 1}^l {Cy_j^{t - 1}x_i^{{b_{ij}} - 1}x_j^{ - {b_{ji}} - 1}D{x_{n + j}}y_j^{l - t}y_i^m}  + {F_{m + 1}}y_j^ly_i^{m + 1}\\	
	&	=& \left( {\sum\limits_{r = 0}^{m + 1} {{F_r}} } \right)y_j^ly_i^{m + 1} + \sum\limits_{t = 1}^l {Cy_i^my_j^{t - 1}x_i^{{b_{ij}} - 1}x_j^{ - {b_{ji}} - 1}D{x_{n + j}}y_j^{l - t}} \\
	&&	+ \left( {{F_0} + {F_1}} \right)\sum\limits_{t = 1}^l {Cy_i^{m - 1}y_j^{t - 1}x_i^{{b_{ij}} - 1}x_j^{ - {b_{ji}} - 1}D{x_{n + j}}y_j^{l - t}y_i^{}} \\
	&&	+ \left( {\sum\limits_{r = 0}^2 {{F_r}} } \right)\sum\limits_{t = 1}^l {Cy_i^{m - 2}y_j^{t - 1}x_i^{{b_{ij}} - 1}x_j^{ - {b_{ji}} - 1}D{x_{n + j}}y_j^{l - t}y_i^2} \\
	&&	+  \cdots  + \left( {\sum\limits_{r = 0}^m {{F_r}} } \right)\sum\limits_{t = 1}^l {Cy_j^{t - 1}x_i^{{b_{ij}} - 1}x_j^{ - {b_{ji}} - 1}D{x_{n + j}}y_j^{l - t}y_i^m} \\
	&	= &\left( {\sum\limits_{r = 0}^{m + 1} {{F_r}} } \right)y_j^ly_i^{m + 1} + \sum\limits_{s = 0}^m {\left( {\left( {\sum\limits_{r = 0}^s {{F_r}} } \right)\sum\limits_{t = 1}^l {Cy_i^{m - s}y_j^{t - 1}x_i^{{b_{ij}} - 1}x_j^{ - {b_{ji}} - 1}D{x_{n + j}}y_j^{l - t}y_i^s} } \right)} ,
	\end{eqnarray*}		
	where ${F_r} = {( - 1)^r}{\left( {{q^{{d_i}}}} \right)^{\frac{{r(r - 1)}}{2} - rm}}{\left[ {\begin{array}{*{20}{c}}
				{m + 1}\\
				r
		\end{array}} \right]_{{q^{{d_i}}}}}.$
	
	Note that
\[\sum\limits_{r = 0}^{m + 1} {{F_r}}  = \sum\limits_{r = 0}^{m + 1} {{{( - 1)}^r}{{\left( {{q^{{d_i}}}} \right)}^{\frac{{r(r - 1)}}{2} - rm}}{{\left[ {\begin{array}{*{20}{c}}
					{m + 1}\\
					r
			\end{array}} \right]}_{{q^{{d_i}}}}}}  \mathop  = \limits^{(\ref{(-cr)})} 0.\]
	
	On the other hand,	according to (\ref{(A)}), we have
	\begin{eqnarray*}
		{\left( {{\Lambda _i}} \right)_{in + j}} &= & - {\lambda _{in + j}} + \sum\limits_{t = 1}^{2n} {{{[{b_{ti}}]}_ + }{\lambda _{tn + j}}}  = \sum\limits_{t = 1}^n {{{[{b_{ti}}]}_ + }{\lambda _{tn + j}}}  + \sum\limits_{t = n + 1}^{2n} {{{[{b_{ti}}]}_ + }{\lambda _{tn + j}}} \\
	&	=& {[{b_{ji}}]_ + }{\lambda _{jn + j}} + {[{b_{n + ii}}]_ + }{\lambda _{n + in + j}} = {\lambda _{n + in + j}} = -{d_i}{b_{ij}},\\
	\end{eqnarray*}	
	
	Thus, we obtain
	\begin{eqnarray*}
		&&	\sum\limits_{s = 0}^m {\left( {\left( {\sum\limits_{r = 0}^s {{F_r}} } \right)\sum\limits_{t = 1}^l {Cy_i^{m - s}y_j^{t - 1}x_i^{{b_{ij}} - 1}x_j^{ - {b_{ji}} - 1}D{x_{n + j}}y_j^{l - t}y_i^s} } \right)} \\
		&	=& C\sum\limits_{s = 0}^m {\left( {\left( {\sum\limits_{r = 0}^s {{F_r}} } \right)y_i^{m - s}x_i^{{b_{ij}} - 1}\left( {\sum\limits_{t = 1}^l {y_j^{t - 1}x_j^{ - {b_{ji}} - 1}{x_{n + j}}y_j^{l - t}} } \right)y_i^s} \right)} D\\
		&	= &C\sum\limits_{s = 0}^m {\left( {{{\left( {{q^{{d_i}}}} \right)}^{  s{b_{ij}}}}\left( {\sum\limits_{r = 0}^s {{F_r}} } \right)y_i^{m - s}x_i^{{b_{ij}} - 1}\left( {\sum\limits_{t = 1}^l {{{\left( {{q^{{d_j}}}} \right)}^{t - l}}y_j^{t - 1}x_j^{ - {b_{ji}} - 1}y_j^{l - t}} } \right)y_i^s} \right)} D{x_{n + j}}.
	\end{eqnarray*}
	
	We only need to prove
	\[\sum\limits_{s = 0}^m {\left( {{{\left( {{q^{{d_i}}}} \right)}^{  s{b_{ij}}}}\left( {\sum\limits_{r = 0}^s {{F_r}} } \right)y_i^{m - s}x_i^{{b_{ij}} - 1}\left( {\sum\limits_{t = 1}^l {{{\left( {{q^{{d_j}}}} \right)}^{t - l}}y_j^{t - 1}x_j^{ - {b_{ji}} - 1}y_j^{l - t}} } \right)y_i^s} \right)}=0. \]
	
	Now, we calculate ${\sum\limits_{t = 1}^l {{{\left( {{q^{{d_j}}}} \right)}^{t - l}}y_j^{t - 1}x_j^{ - {b_{ji}} - 1}y_j^{l - t}} }.$	According to (\ref{(xnyn)n}) and (\ref{(ynxn)n}), we have
	\begin{eqnarray*}
&&	\sum\limits_{t = 1}^l {{{\left( {{q^{{d_j}}}} \right)}^{t - l}}y_j^{t - 1}x_j^{ - {b_{ji}} - 1}y_j^{l - t}}  = \sum\limits_{t = 1}^l {{{\left( {{q^{{d_j}}}} \right)}^{t - l}}y_j^{t - 1}x_j^{t - 1}x_j^{ - {b_{ji}} - l}x_j^{l - t}y_j^{l - t}} \\
	&=& \sum\limits_{t = 1}^l {\left( {{{\left( {{q^{{d_j}}}} \right)}^{t - l}}\left( {\sum\limits_{k = 0}^{t - 1} {{{\left[ {\begin{array}{*{20}{c}}
									{t - 1}\\
									k
							\end{array}} \right]}_{{q^{{d_j}}}}}{{\left( {{q^{{d_j}}}} \right)}^{\frac{{{k^2}}}{2}}}\prod\limits_{v \in [1,n]}^ \triangleleft  {x_v^{(t - 1 - k){{\left[ { - {b_{vj}}} \right]}_ + }}}  \cdot \prod\limits_{v \in [1,n]}^ \triangleleft  {x_v^{k{{\left[ {{b_{vj}}} \right]}_ + }}}  \cdot x_{n + j}^k} } \right)} \right.} \\
&&	\left. { \cdot x_j^{ - {b_{ji}} - l} \left( {\sum\limits_{k = 0}^{l - t} {{{\left[ {\begin{array}{*{20}{c}}
								{l - t}\\
								k
						\end{array}} \right]}_{{q^{{d_j}}}}}{{\left( {{q^{{d_j}}}} \right)}^{\frac{{k(k - 2(l-t))}}{2}}}\prod\limits_{v \in [1,n]}^ \triangleleft  {x_v^{(l - t - k){{\left[ { - {b_{vj}}} \right]}_ + }}}  \cdot \prod\limits_{v \in [1,n]}^ \triangleleft  {x_v^{k{{\left[ {{b_{vj}}} \right]}_ + }}}  \cdot x_{n + j}^k} } \right)} \right)\\
&	=& 	\sum\limits_{t = 1}^l {\left( {{{\left( {{q^{{d_j}}}} \right)}^{t - l}}\left( {\sum\limits_{k = 0}^{t - 1} {{{\left[ {\begin{array}{*{20}{c}}
									{t - 1}\\
									k
							\end{array}} \right]}_{{q^{{d_j}}}}}{{\left( {{q^{{d_j}}}} \right)}^{\frac{{{k^2}}}{2}}}\prod\limits_{v \in [1,n]}^ \triangleleft  {x_v^{(t - 1 - k){{\left[ { - {b_{vj}}} \right]}_ + }}}  \cdot \prod\limits_{v \in [1,n],v \ne i}^ \triangleleft  {x_v^{k{{\left[ {{b_{vj}}} \right]}_ + }}} } } \right.} \right.} \\
&&	\left. { \cdot x_i^{k{b_{ij}}}x_{n + j}^k} \right)x_j^{ - {b_{ji}} - l}\left( {\sum\limits_{k = 0}^{l - t} {{{\left[ {\begin{array}{*{20}{c}}
							{l - t}\\
							k
					\end{array}} \right]}_{{q^{{d_j}}}}}{{\left( {{q^{{d_j}}}} \right)}^{\frac{{k(k - 2(l - t))}}{2}}}\prod\limits_{v \in [1,n]}^ \triangleleft  {x_v^{(l - t - k){{\left[ { - {b_{vj}}} \right]}_ + }}} } } \right.\\
&&	\left. {\left. { \cdot \prod\limits_{v \in [1,n],v \ne i}^ \triangleleft  {x_v^{k{{\left[ {{b_{vj}}} \right]}_ + }}}  \cdot x_i^{k{b_{ij}}}x_{n + j}^k} \right)} \right).
	\end{eqnarray*}
	
	Thus, we have
	\begin{equation}\label{(g_k)}
	\sum\limits_{t = 1}^l {{{\left( {{q^{{d_j}}}} \right)}^{t - l}}y_j^{t - 1}x_j^{ - {b_{ji}} - 1}y_j^{l - t}}  = \sum\limits_{k = 0}^{l - 1} {x_i^{{b_{ij}}k}x_{n + j}^k{g_k}}   ,
	\end{equation}
	where ${g_k} \in \mathbb{ZP}\left[ {{x_1}, \cdots ,{x_{i - 1}},{x_{i + 1}}, \cdots ,{x_n}} \right].$
	
	So, we obtain	
	\begin{eqnarray*}
		&&\sum\limits_{s = 0}^m {\left( {{{\left( {{q^{{d_i}}}} \right)}^{s{b_{ij}}}}\left( {\sum\limits_{r = 0}^s {{F_r}} } \right)y_i^{m - s}x_i^{{b_{ij}} - 1}\left( {\sum\limits_{t = 1}^l {{{\left( {{q^{{d_j}}}} \right)}^{t - l}}y_j^{t - 1}x_j^{ - {b_{ji}} - 1}y_j^{l - t}} } \right)y_i^s} \right)} \\
	&	=& \sum\limits_{s = 0}^m {\left( {{{\left( {{q^{{d_i}}}} \right)}^{s{b_{ij}}}}\left( {\sum\limits_{r = 0}^s {{F_r}} } \right)y_i^{m - s}x_i^{{b_{ij}} - 1}\left( {\sum\limits_{k = 0}^{l - 1} {x_i^{{b_{ij}}k}x_{n + j}^k{g_k}} } \right)y_i^s} \right)} \\
	&	=& \sum\limits_{s = 0}^m {\sum\limits_{k = 0}^{l - 1} {{{\left( {{q^{{d_i}}}} \right)}^{s{b_{ij}}}}\left( {\sum\limits_{r = 0}^s {{F_r}} } \right)y_i^{m - s}x_i^{{b_{ij}} - 1}x_i^{{b_{ij}}k}x_{n + j}^k{g_k}y_i^s} } \\
		&=& \sum\limits_{k = 0}^{l - 1} {\left( {\sum\limits_{s = 0}^m {{{\left( {{q^{{d_i}}}} \right)}^{s{b_{ij}}(1 + k)}}\left( {\sum\limits_{r = 0}^s {{F_r}} } \right)y_i^{m - s}x_i^{{b_{ij}}(1 + k) - 1}y_i^s} } \right)x_{n + j}^k{g_k}}
	\end{eqnarray*}
	
	For any $k \in [0,l-1]$,  we have  $${\sum\limits_{s = 0}^m {{{\left( {{q^{{d_i}}}} \right)}^{s{b_{ij}}(1 + k)}}\left( {\sum\limits_{r = 0}^s {{F_r}} } \right)y_i^{m - s}x_i^{{b_{ij}}(1 + k) - 1}y_i^s} } 		\mathop  = \limits^{(\ref{(lemma4.2)})}0.$$
	
\end{enumerate}	
This completes the proof.		
\end{proof}
\begin{example}
Consider the same example as in Example \ref{exam1}.

We claim that
$$\sum\limits_{r = 0}^3 {{{( - 1)}^r}{{\left( {{q^2}} \right)}^{\frac{{r(r - 1)}}{2} - 2r}}{{\left[ {\begin{array}{*{20}{c}}
						3\\
						r
				\end{array}} \right]}_{{q^2}}}y_1^{3 - r}{y_2}y_1^r}   = 0,$$
$$\sum\limits_{r = 0}^5 {{{( - 1)}^r}{q^{\frac{{r(r - 1)}}{2}}}{{\left[ {\begin{array}{*{20}{c}}
							5\\
							r
					\end{array}} \right]}_q}y_2^{5 - r}y_1^2y_2^r}  = 0.	$$			
Note that
	\[{y_2}{y_1} = {y_1}{y_2} + A{x_2}{x_4},\]
	where $A = {q^{\frac{3}{2}}} - {q^{ - \frac{1}{2}}}.$
	
	Thus,   we obatin
	\begin{eqnarray*}
			{y_2}y_1^2 &=& \left( {{y_1}{y_2} + A{x_2}{x_4}} \right){y_1} = {y_1}{y_2}{y_1} + A{x_2}{x_4}{y_1}\\
		&	=& y_1^2{y_2} + A{y_1}{x_2}{x_4} + A{x_2}{x_4}{y_1} = \left( {y_1^2{y_2} + A\left( {1 + {q^{ - 2}}} \right){x_2}{x_4}{y_1}} \right)
	\end{eqnarray*}	
	and for any $s\geq 1,$ we have
	\begin{eqnarray*}
		y_2^sy_1^2& =&y_2^{s - 1}\left( {y_1^2{y_2} + A\left( {1 + {q^{ - 2}}} \right){x_2}{x_4}{y_1}} \right)\\
	&	= &y_2^{s - 1}y_1^2{y_2} + A\left( {1 + {q^{ - 2}}} \right)y_2^{s - 1}{x_2}{x_4}{y_1}\\
	&	= &y_2^{s - 2}y_1^2y_2^2 + A\left( {1 + {q^{ - 2}}} \right)y_2^{s - 1}{x_2}{x_4}{y_1} + A\left( {1 + {q^{ - 2}}} \right)y_2^{s - 2}{x_2}{x_4}{y_1}{y_2}\\
		\end{eqnarray*}		
	\begin{eqnarray*}
	&	=& y_1^2y_2^s + A\left( {1 + {q^{ - 2}}} \right)\sum\limits_{t = 1}^s {y_2^{s - t}{x_2}{x_4}{y_1}y_2^{t - 1}}.
	\end{eqnarray*}
	
	Then, we get
	\begin{eqnarray*}
	&&	\sum\limits_{r = 0}^5 {{{( - 1)}^r}{q^{\frac{{r(r - 1)}}{2}}}{{\left[ {\begin{array}{*{20}{c}}
							5\\
							r
					\end{array}} \right]}_q}y_2^{5 - r}y_1^2y_2^r} \\
	&	=& {D_0}y_2^5y_1^2 + {D_1}y_2^4y_1^2y_2^{} + {D_2}y_2^3y_1^2y_2^2 + {D_3}y_2^2y_1^2y_2^3 + {D_4}y_2^{}y_1^2y_2^4 + {D_5}y_1^2y_2^5\\
	&	=& {D_0}y_1^2y_2^5 + A\left( {1 + {q^{ - 2}}} \right){D_0}\sum\limits_{t = 1}^5 {y_2^{5 - t}{x_2}{x_4}{y_1}y_2^{t - 1}} \\
	&&	+ {D_1}y_1^2y_2^5 + A\left( {1 + {q^{ - 2}}} \right){D_1}\sum\limits_{t = 1}^4 {y_2^{4 - t}{x_2}{x_4}{y_1}y_2^{t - 1}{y_2}} \\
	&&	+ {D_2}y_1^2y_2^5 + A\left( {1 + {q^{ - 2}}} \right){D_2}\sum\limits_{t = 1}^3 {y_2^{3 - t}{x_2}{x_4}{y_1}y_2^{t - 1}y_2^2} \\	
	&&	+ {D_3}y_1^2y_2^5 + A\left( {1 + {q^{ - 2}}} \right){D_3}\sum\limits_{t = 1}^2 {y_2^{2 - t}{x_2}{x_4}{y_1}y_2^{t - 1}y_2^3} \\
	&&	+ {D_4}y_1^2y_2^5 + A\left( {1 + {q^{ - 2}}} \right){D_4}{x_2}{x_4}{y_1}y_2^4 + {D_5}y_1^2y_2^5\\
	&	=& \left( {\sum\limits_{r = 0}^5 {{D_r}} } \right)y_1^2y_2^5 + A\left( {1 + {q^{ - 2}}} \right)\left( {{D_0}y_2^4{x_2}{x_4}{y_1} + \left( {{D_0} + {D_1}} \right)y_2^3{x_2}{x_4}{y_1}{y_2}} \right.\\
	&&	\left. { + \left( {\sum\limits_{r = 0}^2 {{D_r}} } \right)y_2^2{x_2}{x_4}{y_1}y_2^2 + \left( {\sum\limits_{r = 0}^3 {{D_r}} } \right)y_2^{}{x_2}{x_4}{y_1}y_2^3 + \left( {\sum\limits_{r = 0}^4 {{D_r}} } \right){x_2}{x_4}{y_1}y_2^4} \right)\\
	&	=& A\left( {1 + {q^{ - 2}}} \right)\left( {{D_0}{q^2}y_2^4{x_2}{y_1} + \left( {{D_0} + {D_1}} \right)qy_2^3{x_2}{y_1}{y_2} + \left( {\sum\limits_{r = 0}^2 {{D_r}} } \right)y_2^2{x_2}{y_1}y_2^2} \right.\\
	&&	\left. { + \left( {\sum\limits_{r = 0}^3 {{D_r}} } \right){q^{ - 1}}y_2^{}{x_2}{y_1}y_2^3 + {q^{ - 2}}\left( {\sum\limits_{r = 0}^4 {{D_r}} } \right){x_2}{y_1}y_2^4} \right){x_4},
	\end{eqnarray*}
where ${D_r} = {( - 1)^r}{q^{\frac{{r(r - 1)}}{2}}}{\left[ {\begin{array}{*{20}{c}}
			5\\
			r
	\end{array}} \right]_q}.$
Hence, the claim can be deduced from the following direct computation
	\begin{eqnarray*}
	&&	{D_0}{q^2}y_2^4{x_2}{y_1} + \left( {{D_0} + {D_1}} \right)qy_2^3{x_2}{y_1}{y_2} + \left( {\sum\limits_{r = 0}^2 {{D_r}} } \right)y_2^2{x_2}{y_1}y_2^2\\
	&&	+ \left( {\sum\limits_{r = 0}^3 {{D_r}} } \right){q^{ - 1}}y_2^{}{x_2}{y_1}y_2^3 + {q^{ - 2}}\left( {\sum\limits_{r = 0}^4 {{D_r}} } \right){x_2}{y_1}y_2^4\\
	&	=& {D_0}{q^2}y_2^4{x_2}\left( {{q^{ - 1}}x_1^{ - 1}{x_3} + x_1^{ - 1}x_2^2} \right) + \left( {{D_0} + {D_1}} \right)qy_2^3{x_2}\left( {{q^{ - 1}}x_1^{ - 1}{x_3} + x_1^{ - 1}x_2^2} \right){y_2}\\
		\end{eqnarray*}		
	\begin{eqnarray*}
	&&	+ \left( {\sum\limits_{r = 0}^2 {{D_r}} } \right)y_2^2{x_2}\left( {{q^{ - 1}}x_1^{ - 1}{x_3} + x_1^{ - 1}x_2^2} \right)y_2^2 + \left( {\sum\limits_{r = 0}^3 {{D_r}} } \right){q^{ - 1}}y_2^{}{x_2}\left( {{q^{ - 1}}x_1^{ - 1}{x_3} + x_1^{ - 1}x_2^2} \right)y_2^3\\		
	&&	+ {q^{ - 2}}\left( {\sum\limits_{r = 0}^4 {{D_r}} } \right){x_2}\left( {{q^{ - 1}}x_1^{ - 1}{x_3} + x_1^{ - 1}x_2^2} \right)y_2^4\\
	&	=& {D_0}qy_2^4{x_2}x_1^{ - 1}{x_3} + \left( {{D_0} + {D_1}} \right)y_2^3{x_2}x_1^{ - 1}{x_3}{y_2} + \left( {\sum\limits_{r = 0}^2 {{D_r}} } \right){q^{ - 1}}y_2^2{x_2}x_1^{ - 1}{x_3}y_2^2\\
	&&	+ \left( {\sum\limits_{r = 0}^3 {{D_r}} } \right){q^{ - 2}}y_2^{}{x_2}x_1^{ - 1}{x_3}y_2^3 + {q^{ - 3}}\left( {\sum\limits_{r = 0}^4 {{D_r}} } \right){x_2}x_1^{ - 1}{x_3}y_2^4\\
	&&	+ {D_0}{q^2}y_2^4{x_2}x_1^{ - 1}x_2^2 + \left( {{D_0} + {D_1}} \right)qy_2^3{x_2}x_1^{ - 1}x_2^2{y_2} + \left( {\sum\limits_{r = 0}^2 {{D_r}} } \right)y_2^2{x_2}x_1^{ - 1}x_2^2y_2^2\\
	&&	+ \left( {\sum\limits_{r = 0}^3 {{D_r}} } \right){q^{ - 1}}y_2^{}{x_2}x_1^{ - 1}x_2^2y_2^3 + {q^{ - 2}}\left( {\sum\limits_{r = 0}^4 {{D_r}} } \right){x_2}x_1^{ - 1}x_2^2y_2^4\\	
	&	=& \left( {{D_0}qy_2^4{x_2} + \left( {{D_0} + {D_1}} \right)y_2^3{x_2}{y_2} + \left( {\sum\limits_{r = 0}^2 {{D_r}} } \right){q^{ - 1}}y_2^2{x_2}y_2^2} \right.\\
	&&	\left. { + \left( {\sum\limits_{r = 0}^3 {{D_r}} } \right){q^{ - 2}}y_2^{}{x_2}y_2^3 + {q^{ - 3}}\left( {\sum\limits_{r = 0}^4 {{D_r}} } \right){x_2}y_2^4} \right)x_1^{ - 1}{x_3}\\		
	&&+\left( {  {D_0}{q^2}y_2^4x_2^3x_1^{ - 1} + \left( {{D_0} + {D_1}} \right)qy_2^3x_2^3{y_2} + \left( {\sum\limits_{r = 0}^2 {{D_r}} } \right)y_2^2x_2^3y_2^2} \right.\\
	&&	\left. { + \left( {\sum\limits_{r = 0}^3 {{D_r}} } \right){q^{ - 1}}y_2^{}x_2^3y_2^3 + {q^{ - 2}}\left( {\sum\limits_{r = 0}^4 {{D_r}} } \right)x_2^3y_2^4} \right)x_1^{ - 1}.
	\end{eqnarray*}
	
Note that  	
	\begin{eqnarray*}
	&&	{D_0}qy_2^4{x_2} + \left( {{D_0} + {D_1}} \right)y_2^3{x_2}{y_2} + \left( {\sum\limits_{r = 0}^2 {{D_r}} } \right){q^{ - 1}}y_2^2{x_2}y_2^2\\&& + \left( {\sum\limits_{r = 0}^3 {{D_r}} } \right){q^{ - 2}}y_2^{}{x_2}y_2^3 + {q^{ - 3}}\left( {\sum\limits_{r = 0}^4 {{D_r}} } \right){x_2}y_2^4\\
	&	=&	{D_0}y_2^3\left( {{q^{\frac{3}{2}}}{x_1}{x_4} + q} \right) + \left( {{D_0} + {D_1}} \right)y_2^3\left( {{q^{ - \frac{1}{2}}}{x_1}{x_4} + 1} \right) + \left( {\sum\limits_{r = 0}^2 {{D_r}} } \right)y_2^2\left( {{q^{ - \frac{3}{2}}}{x_1}{x_4} + {q^{ - 1}}} \right)y_2^{}\\
	&&	+ \left( {\sum\limits_{r = 0}^3 {{D_r}} } \right)\left( {{q^{ - \frac{3}{2}}}{x_1}{x_4} + {q^{ - 2}}} \right)y_2^3 + \left( {\sum\limits_{r = 0}^4 {{D_r}} } \right)\left( {{q^{ - \frac{7}{2}}}{x_1}{x_4} + {q^{ - 3}}} \right)y_2^3\\	
	&	= &{q^{\frac{3}{2}}}{D_0}y_2^3{x_1}{x_4} + {q^{ - \frac{1}{2}}}\left( {{D_0} + {D_1}} \right)y_2^3{x_1}{x_4} + {q^{ - \frac{3}{2}}}\left( {\sum\limits_{r = 0}^2 {{D_r}} } \right)y_2^2{x_1}{x_4}{y_2} + {q^{ - \frac{3}{2}}}\left( {\sum\limits_{r = 0}^3 {{D_r}} } \right){x_1}{x_4}y_2^3\\
		\end{eqnarray*}		
	\begin{eqnarray*}
	&&	+ {q^{ - \frac{7}{2}}}\left( {\sum\limits_{r = 0}^4 {{D_r}} } \right){x_1}{x_4}y_2^3 \\
	&&+ \left( {q{D_0} + \left( {{D_0} + {D_1}} \right) + {q^{ - 1}}\left( {\sum\limits_{r = 0}^2 {{D_r}} } \right) + {q^{ - 2}}\left( {\sum\limits_{r = 0}^3 {{D_r}} } \right) + {q^{ - 3}}\left( {\sum\limits_{r = 0}^4 {{D_r}} } \right)} \right)y_2^3\\	
	&	=& \left( {{q^{\frac{9}{2}}}{D_0} + {q^{\frac{5}{2}}}\left( {{D_0} + {D_1}} \right) + {q^{\frac{1}{2}}}\left( {\sum\limits_{r = 0}^2 {{D_r}} } \right) + {q^{ - \frac{3}{2}}}\left( {\sum\limits_{r = 0}^3 {{D_r}} } \right) + {q^{ - \frac{7}{2}}}\left( {\sum\limits_{r = 0}^4 {{D_r}} } \right)} \right){x_1}{x_4}y_2^3\\
	&&	+ \left( {q\sum\limits_{t = 0}^4 {{q^{ - t}}\left( {\sum\limits_{r = 0}^t {{D_r}} } \right)} } \right)y_2^3\\
	&	=& \left( {{q^{\frac{9}{2}}}\sum\limits_{t = 0}^4 {{q^{ - 2t}}\left( {\sum\limits_{r = 0}^t {{D_r}} } \right)} } \right){x_1}{x_4}y_2^3 + \left( {q\sum\limits_{t = 0}^4 {{q^{ - t}}\left( {\sum\limits_{r = 0}^t {{D_r}} } \right)} } \right)y_2^3 	\mathop  = \limits^{(\ref{(lemma3)})} 0,
	\end{eqnarray*}
and for any $t\ge 1$,
	\begin{eqnarray*}
		y_2^tx_2^t =\sum\limits_{k = 0}^t {{{\left[ {\begin{array}{*{20}{c}}
							t\\
							k
					\end{array}} \right]}_q}{q^{\frac{{{k^2}}}{2}}}x_1^kx_4^k} ;\quad
		x_2^ty_2^t = \sum\limits_{k = 0}^t {{{\left[ {\begin{array}{*{20}{c}}
							t\\
							k
					\end{array}} \right]}_q}{q^{\frac{{k(k - 2t)}}{2}}}x_1^kx_4^k} .
	\end{eqnarray*}	
	
Thus, we obtain
	\begin{eqnarray*}
		&&	{D_0}{q^2}y_2^4x_2^3 + \left( {{D_0} + {D_1}} \right)qy_2^3x_2^3{y_2} + \left( {\sum\limits_{r = 0}^2 {{D_r}} } \right)y_2^2x_2^3y_2^2\\&& + \left( {\sum\limits_{r = 0}^3 {{D_r}} } \right){q^{ - 1}}y_2^{}x_2^3y_2^3 + {q^{ - 2}}\left( {\sum\limits_{r = 0}^4 {{D_r}} } \right)x_2^3y_2^4\\
		&	=& {D_0}{q^2}{y_2}\left( {\sum\limits_{k = 0}^3 {{{\left[ {\begin{array}{*{20}{c}}
									3\\
									k
							\end{array}} \right]}_q}{q^{\frac{{{k^2}}}{2}}}x_1^kx_4^k} } \right) + \left( {{D_0} + {D_1}} \right)q\left( {\sum\limits_{k = 0}^3 {{{\left[ {\begin{array}{*{20}{c}}
									3\\
									k
							\end{array}} \right]}_q}{q^{\frac{{{k^2}}}{2}}}x_1^kx_4^k} } \right){y_2}\\
		&&	+ \left( {\sum\limits_{r = 0}^2 {{D_r}} } \right)\left( {\sum\limits_{k = 0}^2 {{{\left[ {\begin{array}{*{20}{c}}
									2\\
									k
							\end{array}} \right]}_q}{q^{\frac{{{k^2}}}{2}}}x_1^kx_4^k} } \right)\left( {1 + {q^{ - \frac{1}{2}}}x_1^{}x_4^{}} \right){y_2} \\&&+ \left( {\sum\limits_{r = 0}^3 {{D_r}} } \right){q^{ - 1}}y_2^{}\left( {\sum\limits_{k = 0}^3 {{{\left[ {\begin{array}{*{20}{c}}
									3\\
									k
							\end{array}} \right]}_q}{q^{\frac{{k(k - 6)}}{2}}}x_1^kx_4^k} } \right)\\
		&&	+ {q^{ - 2}}\left( {\sum\limits_{r = 0}^4 {{D_r}} } \right)\left( {\sum\limits_{k = 0}^3 {{{\left[ {\begin{array}{*{20}{c}}
									3\\
									k
							\end{array}} \right]}_q}{q^{\frac{{k(k - 6)}}{2}}}x_1^kx_4^k} } \right){y_2}\\
		&	= &{D_0}{q^2}\left( {\sum\limits_{k = 0}^3 {{{\left[ {\begin{array}{*{20}{c}}
									3\\
									k
							\end{array}} \right]}_q}{q^{\frac{{{k^2} + 2k}}{2}}}x_1^kx_4^k} } \right){y_2} + \left( {{D_0} + {D_1}} \right)q\left( {\sum\limits_{k = 0}^3 {{{\left[ {\begin{array}{*{20}{c}}
									3\\
									k
							\end{array}} \right]}_q}{q^{\frac{{{k^2}}}{2}}}x_1^kx_4^k} } \right){y_2}\\
		&&	+ \left( {\sum\limits_{r = 0}^2 {{D_r}} } \right)\left( {\sum\limits_{k = 0}^2 {{{\left[ {\begin{array}{*{20}{c}}
									2\\
									k
							\end{array}} \right]}_q}{q^{\frac{{{k^2}}}{2}}}x_1^kx_4^k} } \right)\left( {1 + {q^{ - \frac{1}{2}}}x_1^{}x_4^{}} \right){y_2}\\
							\end{eqnarray*}		
						\begin{eqnarray*}
						&& + \left( {\sum\limits_{r = 0}^3 {{D_r}} } \right){q^{ - 1}}\left( {\sum\limits_{k = 0}^3 {{{\left[ {\begin{array}{*{20}{c}}
									3\\
									k
							\end{array}} \right]}_q}{q^{\frac{{k(k - 4)}}{2}}}x_1^kx_4^k} } \right){y_2}\\
		&&	+ {q^{ - 2}}\left( {\sum\limits_{r = 0}^4 {{D_r}} } \right)\left( {\sum\limits_{k = 0}^3 {{{\left[ {\begin{array}{*{20}{c}}
									3\\
									k
							\end{array}} \right]}_q}{q^{\frac{{k(k - 6)}}{2}}}x_1^kx_4^k} } \right){y_2}\\
		&	=& {y_2}\left( {{D_0}{q^2} + \left( {{D_0} + {D_1}} \right)q + \left( {\sum\limits_{r = 0}^2 {{D_r}} } \right) + \left( {\sum\limits_{r = 0}^3 {{D_r}} } \right){q^{ - 1}} + {q^{ - 2}}\left( {\sum\limits_{r = 0}^4 {{D_r}} } \right)} \right)\\
		&&	+ {x_1}{x_4}{y_2}{\left[ {\begin{array}{*{20}{c}}
						3\\
						1
				\end{array}} \right]_q}\left( {{D_0}{q^{\frac{7}{2}}} + \left( {{D_0} + {D_1}} \right){q^{\frac{3}{2}}} + \left( {\sum\limits_{r = 0}^2 {{D_r}} } \right){q^{ - \frac{1}{2}}} + \left( {\sum\limits_{r = 0}^3 {{D_r}} } \right){q^{ - \frac{5}{2}}} + {q^{ - \frac{9}{2}}}\left( {\sum\limits_{r = 0}^4 {{D_r}} } \right)} \right)\\
		&&	+ x_1^2x_4^2{y_2}{\left[ {\begin{array}{*{20}{c}}
						3\\
						2
				\end{array}} \right]_q}\left( {{D_0}{q^6} + \left( {{D_0} + {D_1}} \right){q^3} + \left( {\sum\limits_{r = 0}^2 {{D_r}} } \right) + \left( {\sum\limits_{r = 0}^3 {{D_r}} } \right){q^{ - 3}} + {q^{ - 6}}\left( {\sum\limits_{r = 0}^4 {{D_r}} } \right)} \right)\\
		&&	+ x_1^3x_4^3{y_2}\left( {{D_0}{q^{\frac{{19}}{2}}} + \left( {{D_0} + {D_1}} \right){q^{\frac{{11}}{2}}} + \left( {\sum\limits_{r = 0}^2 {{D_r}} } \right){q^{\frac{3}{2}}} + \left( {\sum\limits_{r = 0}^3 {{D_r}} } \right){q^{ - \frac{5}{2}}} + {q^{ - \frac{{13}}{2}}}\left( {\sum\limits_{r = 0}^4 {{D_r}} } \right)} \right)\\	
		&	=& {y_2}\left( {{q^2}\sum\limits_{t = 0}^4 {{q^{ - t}}\left( {\sum\limits_{r = 0}^t {{D_r}} } \right)} } \right) + {x_1}{x_4}{y_2}{\left[ {\begin{array}{*{20}{c}}
						3\\
						1
				\end{array}} \right]_q}\left( {{q^{\frac{7}{2}}}\sum\limits_{t = 0}^4 {{q^{ - 2t}}\left( {\sum\limits_{r = 0}^t {{D_r}} } \right)} } \right)\\
		&&	+ x_1^2x_4^2{y_2}{\left[ {\begin{array}{*{20}{c}}
						3\\
						2
				\end{array}} \right]_q}\left( {{q^6}\sum\limits_{t = 0}^4 {{q^{ - 3t}}\left( {\sum\limits_{r = 0}^t {{D_r}} } \right)} } \right) + x_1^3x_4^3{y_2}\left( {{q^{\frac{{19}}{2}}}\sum\limits_{t = 0}^4 {{q^{ - 4t}}\left( {\sum\limits_{r = 0}^t {{D_r}} } \right)} } \right)
			\mathop  = \limits^{(\ref{(lemma3)})} 0.
	\end{eqnarray*}

	Similarly, we have
	\begin{eqnarray*}
&&	\sum\limits_{r = 0}^3 {{{( - 1)}^r}{{\left( {{q^2}} \right)}^{\frac{{r(r - 1)}}{2} - 2r}}{{\left[ {\begin{array}{*{20}{c}}
						3\\
						r
				\end{array}} \right]}_{{q^2}}}y_1^{3 - r}{y_2}y_1^r} \\
&	=& y_1^3{y_2} - {q^{ - 4}}{\left[ {\begin{array}{*{20}{c}}
				3\\
				1
		\end{array}} \right]_{{q^2}}}y_1^2{y_2}y_1^{} + {q^{ - 6}}{\left[ {\begin{array}{*{20}{c}}
				3\\
				2
		\end{array}} \right]_{{q^2}}}y_1^{}{y_2}y_1^2 - {q^{ - 6}}{y_2}y_1^3\\
&	= &{y_2}y_1^3 - Ay_1^2{x_2}{x_4} - A{y_1}{x_2}{x_4}{y_1} - A{x_2}{x_4}y_1^2\\
&&	- {q^{ - 4}}{\left[ {\begin{array}{*{20}{c}}
				3\\
				1
		\end{array}} \right]_{{q^2}}}{y_2}y_1^3 + {q^{ - 4}}{\left[ {\begin{array}{*{20}{c}}
				3\\
				1
		\end{array}} \right]_{{q^2}}}A{y_1}{x_2}{x_4}{y_1} + {q^{ - 4}}{\left[ {\begin{array}{*{20}{c}}
				3\\
				1
		\end{array}} \right]_{{q^2}}}A{x_2}{x_4}y_1^2\\
&&	+ {q^{ - 6}}{\left[ {\begin{array}{*{20}{c}}
				3\\
				2
		\end{array}} \right]_{{q^2}}}{y_2}y_1^3 - {q^{ - 6}}{\left[ {\begin{array}{*{20}{c}}
				3\\
				2
		\end{array}} \right]_{{q^2}}}A{x_2}{x_4}y_1^2 - {q^{ - 6}}{y_2}y_1^3\\
&	=& \left( {1 - {q^{ - 4}}{{\left[ {\begin{array}{*{20}{c}}
						3\\
						1
				\end{array}} \right]}_{{q^2}}} + {q^{ - 6}}{{\left[ {\begin{array}{*{20}{c}}
						3\\
						2
				\end{array}} \right]}_{{q^2}}} - {q^{ - 6}}} \right){y_2}y_1^3 - Ay_1^2{x_2}{x_4}\\
&&	- A\left( {1 - {q^{ - 4}}{{\left[ {\begin{array}{*{20}{c}}
						3\\
						1
				\end{array}} \right]}_{{q^2}}}} \right){y_1}{x_2}{x_4}{y_1} - A\left( {1 - {q^{ - 4}}{{\left[ {\begin{array}{*{20}{c}}
						3\\
						1
				\end{array}} \right]}_{{q^2}}} + {q^{ - 6}}{{\left[ {\begin{array}{*{20}{c}}
						3\\
						2
				\end{array}} \right]}_{{q^2}}}} \right){x_2}{x_4}y_1^2\\
&	=& \left( {1 - {q^{ - 4}} - {q^{ - 2}} - 1 + {q^{ - 6}} + {q^{ - 4}} + {q^{ - 2}} - {q^{ - 6}}} \right){y_2}y_1^3 - A{q^{ - 4}}{x_2}{x_4}y_1^2\\
	\end{eqnarray*}		
\begin{eqnarray*}
&&	- A{q^{ - 2}}\left( {1 - {q^{ - 4}}{{\left[ {\begin{array}{*{20}{c}}
						3\\
						1
				\end{array}} \right]}_{{q^2}}}} \right){x_2}{x_4}y_1^2 - A\left( {1 - {q^{ - 4}}{{\left[ {\begin{array}{*{20}{c}}
						3\\
						1
				\end{array}} \right]}_{{q^2}}} + {q^{ - 6}}{{\left[ {\begin{array}{*{20}{c}}
						3\\
						2
				\end{array}} \right]}_{{q^2}}}} \right){x_2}{x_4}y_1^2\\
&	= & - A{x_2}{x_4}y_1^2\left( {{q^{ - 4}} + {q^{ - 2}} - {q^{ - 6}} - {q^{ - 4}} - {q^{ - 2}} + 1 - {q^{ - 4}} - {q^{ - 2}} - 1 + {q^{ - 6}} + {q^{ - 4}} + {q^{ - 2}}} \right)\\
&	=& 0.
	\end{eqnarray*}
\end{example}
\begin{example}
Consider the same example as in Example \ref{exam3}.
	
	We claim that
	\begin{eqnarray*}
			\sum\limits_{r = 0}^5 {{{( - 1)}^r}{q^{\frac{{r(r - 1)}}{2}}}{{\left[ {\begin{array}{*{20}{c}}
							5\\
							r
					\end{array}} \right]}_q}y_1^{5- r}{y_3^2}y_1^r} & =& 0.\\
	\end{eqnarray*}

	Note that
	\begin{eqnarray*}
		{y_1}{y_3}& =& {q^{ - 1}}x_1^{ - 1}x_2^2x_3^{}{x_4}{x_6} + {q^{ - \frac{1}{2}}}x_1^{ - 1}x_2^4x_3^{ - 1}{x_6} + {q^{\frac{3}{2}}}x_1^{}x_3^{}{x_4} + x_1^{}x_2^2x_3^{ - 1},\\
		{y_3}{y_1} &=& {q^{ - 1}}x_1^{ - 1}x_2^2x_3^{}{x_4}{x_6} + {q^{ - \frac{1}{2}}}x_1x_3^{}{x_4} + {q^{ - \frac{1}{2}}}x_1^{ - 1}x_2^4x_3^{ - 1}{x_6} + x_1^{}x_2^2x_3^{ - 1}.
	\end{eqnarray*}
	
	It follows that
\[{y_1}{y_3} = {y_3}{y_1} + \left( {{q^{\frac{3}{2}}} - {q^{ - \frac{1}{2}}}} \right)x_1^{}x_3^{}{x_4} = {y_3}{y_1} + Ax_1^{}x_3^{}{x_4},\]
	where $A = {q^{\frac{3}{2}}} - {q^{ - \frac{1}{2}}}.$
	
	Then,   we have
	\begin{eqnarray*}
		{y_1}y_3^2& =& \left( {{y_3}{y_1} + Ax_1^{}x_3^{}{x_4}} \right){y_3} = {y_3}{y_1}{y_3} + A{x_1}{x_3}{x_4}{y_3}\\
	&	=& y_3^2{y_1} + A{x_1}{x_3}{x_4}{y_3} + A{y_3}{x_1}{x_3}{x_4} = y_3^2{y_1} + A\left( {{q^2}{x_3}{y_3} + {y_3}{x_3}} \right){x_1}{x_4}\\
	&	=& y_3^2{y_1} + A\left( {\left( {{q^{\frac{3}{2}}} + {q^{\frac{1}{2}}}} \right)x_2^2{x_6} + \left( {{q^2} + 1} \right)x_1^2} \right){x_1}{x_4}\\
	&	=& y_3^2{y_1} + ABx_2^2{x_6}{x_1}{x_4} + ACx_1^3{x_4},
	\end{eqnarray*}
	where $B = {q^{\frac{3}{2}}} + {q^{\frac{1}{2}}}$ and $C = {q^2} + 1.$
	
	Thus, for any $s\geq 1,$ we obtain
	\begin{eqnarray*}
		y_1^sy_3^2 &=& y_1^{s - 1}\left( {y_3^2{y_1} + ABx_2^2{x_6}{x_1}{x_4} + ACx_1^3{x_4}} \right)\\
	&	=& y_1^{s - 1}y_3^2{y_1} + ABy_1^{s - 1}x_2^2{x_6}{x_1}{x_4} + ACy_1^{s - 1}x_1^3{x_4}\\
	&	=& y_1^{s - 2}y_3^2y_1^2 + ABy_1^{s - 1}x_2^2{x_6}{x_1}{x_4} + ACy_1^{s - 1}x_1^3{x_4}\\
	&& + ABy_1^{s - 2}x_2^2{x_6}{x_1}{x_4}{y_1} + ACy_1^{s - 2}x_1^3{x_4}{y_1}\\
	&	=& y_3^2y_1^s + AB\sum\limits_{t = 1}^s {y_1^{s - t}x_2^2{x_6}{x_1}{x_4}y_1^{t - 1}}  + AC\sum\limits_{t = 1}^s {y_1^{s - t}x_1^3{x_4}y_1^{t - 1}} .
	\end{eqnarray*}
	
	Now, we calculate
	\begin{eqnarray*}
	&&	\sum\limits_{r = 0}^5 {{{( - 1)}^r}{q^{\frac{{r(r - 1)}}{2}}}{{\left[ {\begin{array}{*{20}{c}}
							5\\
							r
					\end{array}} \right]}_q}y_1^{5 - r}y_3^2y_1^r} \\
	&	=&{D_0}y_1^5y_3^2 + {D_1}y_1^4y_3^2y_1^{} + {D_2}y_1^3y_3^2y_1^2 + {D_3}y_1^2y_3^2y_1^3 + {D_4}y_1^{}y_3^2y_1^4 + {D_5}y_3^2y_1^5\\
		\end{eqnarray*}		
	\begin{eqnarray*}
	&	=& {D_0}y_3^2y_1^5 + AB{D_0}\sum\limits_{t = 1}^5 {y_1^{5 - t}x_2^2{x_6}{x_1}{x_4}y_1^{t - 1}}  + AC{D_0}\sum\limits_{t = 1}^5 {y_1^{5 - t}x_1^3{x_4}y_1^{t - 1}} \\
	&&	+ {D_1}y_3^2y_1^5 + AB{D_1}\sum\limits_{t = 1}^4 {y_1^{4 - t}x_2^2{x_6}{x_1}{x_4}y_1^{t - 1}{y_1}}  + AC{D_1}\sum\limits_{t = 1}^4 {y_1^{4 - t}x_1^3{x_4}y_1^{t - 1}{y_1}} \\
	&&	+ {D_2}y_3^2y_1^5 + AB{D_2}\sum\limits_{t = 1}^3 {y_1^{3 - t}x_2^2{x_6}{x_1}{x_4}y_1^{t - 1}y_1^2}  + AC{D_2}\sum\limits_{t = 1}^3 {y_1^{3 - t}x_1^3{x_4}y_1^{t - 1}y_1^2} \\
	&&	+ {D_3}y_3^2y_1^5 + AB{D_3}\sum\limits_{t = 1}^2 {y_1^{2 - t}x_2^2{x_6}{x_1}{x_4}y_1^{t - 1}y_1^3}  + AC{D_3}\sum\limits_{t = 1}^2 {y_1^{2 - t}x_1^3{x_4}y_1^{t - 1}y_1^3} \\
	&&	+ {D_4}y_3^2y_1^5 + AB{D_4}x_2^2{x_6}{x_1}{x_4}y_1^4 + AC{D_4}x_1^3{x_4}y_1^4 + {D_5}y_3^2y_1^5\\
	&	= &\left( {\sum\limits_{r = 0}^5 {{D_r}} } \right)y_3^2y_1^5 + AB\left( {{D_0}y_1^4x_2^2{x_6}{x_1}{x_4} + \left( {{D_0} + {D_1}} \right)y_1^3x_2^2{x_6}{x_1}{x_4}y_1^{}} \right.\\
	&&	\left. { + \left( {\sum\limits_{r = 0}^2 {{D_r}} } \right)y_1^2x_2^2{x_6}{x_1}{x_4}y_1^2 + \left( {\sum\limits_{r = 0}^3 {{D_r}} } \right)y_1^{}x_2^2{x_6}{x_1}{x_4}y_1^3 + \left( {\sum\limits_{r = 0}^4 {{D_r}} } \right)x_2^2{x_6}{x_1}{x_4}y_1^4} \right)\\
	&&	+ AC\left( {{D_0}y_1^4x_1^3{x_4} + \left( {{D_0} + {D_1}} \right)y_1^3x_1^3{x_4}y_1^{} + \left( {\sum\limits_{r = 0}^2 {{D_r}} } \right)y_1^2x_1^3{x_4}y_1^2} \right.\\
	&&	\left. { + \left( {\sum\limits_{r = 0}^3 {{D_r}} } \right)y_1^{}x_1^3{x_4}y_1^3 + \left( {\sum\limits_{r = 0}^4 {{D_r}} } \right)x_1^3{x_4}y_1^4} \right)\\
	&	=& \left( {\sum\limits_{r = 0}^5 {{D_r}} } \right)y_3^2y_1^5 + AB\left( {{D_0}y_1^4{x_1} + {q^{ - 1}}\left( {{D_0} + {D_1}} \right)y_1^3{x_1}y_1^{}} \right.\\
	&&	\left. { + {q^{ - 2}}\left( {\sum\limits_{r = 0}^2 {{D_r}} } \right)y_1^2{x_1}y_1^2 + {q^{ - 3}}\left( {\sum\limits_{r = 0}^3 {{D_r}} } \right)y_1^{}{x_1}y_1^3 + {q^{ - 4}}\left( {\sum\limits_{r = 0}^4 {{D_r}} } \right){x_1}y_1^4} \right)x_2^2{x_6}{x_4}\\
	&&	+ AC\left( {{D_0}y_1^4x_1^3 + {q^{ - 1}}\left( {{D_0} + {D_1}} \right)y_1^3x_1^3y_1^{} + {q^{ - 2}}\left( {\sum\limits_{r = 0}^2 {{D_r}} } \right)y_1^2x_1^3y_1^2} \right.\\
	&&	\left. { + {q^{ - 3}}\left( {\sum\limits_{r = 0}^3 {{D_r}} } \right)y_1^{}x_1^3y_1^3 + {q^{ - 4}}\left( {\sum\limits_{r = 0}^4 {{D_r}} } \right)x_1^3y_1^4} \right){x_4},
	\end{eqnarray*}
	where ${D_r} = {( - 1)^r}{q^{\frac{{r(r - 1)}}{2}}}{\left[ {\begin{array}{*{20}{c}}
				5\\
				r
		\end{array}} \right]_q}.$
	
Note that 	$\sum\limits_{r = 0}^5 {{D_r}}  = 0,$  and for any $t\ge 1$, $$y_1^tx_1^t = \sum\limits_{k = 0}^t {{{\left[ {\begin{array}{*{20}{c}}
						t\\
						k
				\end{array}} \right]}_q}{q^{\frac{{{k^2}}}{2}}}x_2^{2(t - k)}x_3^{2k}x_4^k},\ x_1^ty_1^t = \sum\limits_{k = 0}^t {{{\left[ {\begin{array}{*{20}{c}}
						t\\
						k
				\end{array}} \right]}_q}{q^{\frac{{k(k - 2t)}}{2}}}x_2^{2(t - k)}x_3^{2k}x_4^k}.$$

Hence, the claim can be deduced from the following  computation
	\begin{eqnarray*}
	&&	{D_0}y_1^4{x_1} + {q^{ - 1}}\left( {{D_0} + {D_1}} \right)y_1^3{x_1}y_1^{} + {q^{ - 2}}\left( {\sum\limits_{r = 0}^2 {{D_r}} } \right)y_1^2{x_1}y_1^2\\
	&&	+ {q^{ - 3}}\left( {\sum\limits_{r = 0}^3 {{D_r}} } \right)y_1^{}{x_1}y_1^3 + {q^{ - 4}}\left( {\sum\limits_{r = 0}^4 {{D_r}} } \right){x_1}y_1^4\\
	&	=& {D_0}y_1^3\left( {{q^{\frac{1}{2}}}x_3^2{x_4} + x_2^2} \right) + {q^{ - 1}}\left( {{D_0} + {D_1}} \right)y_1^3\left( {{q^{ - \frac{1}{2}}}x_3^2{x_4} + x_2^2} \right)\\
	&&	+ {q^{ - 2}}\left( {\sum\limits_{r = 0}^2 {{D_r}} } \right)y_1^2\left( {{q^{ - \frac{1}{2}}}x_3^2{x_4} + x_2^2} \right){y_1} + {q^{ - 3}}\left( {\sum\limits_{r = 0}^3 {{D_r}} } \right)\left( {{q^{\frac{1}{2}}}x_3^2{x_4} + x_2^2} \right)y_1^3\\
	&&	+ {q^{ - 4}}\left( {\sum\limits_{r = 0}^4 {{D_r}} } \right)\left( {{q^{ - \frac{1}{2}}}x_3^2{x_4} + x_2^2} \right)y_1^3\\
	&	=& y_1^3x_3^2{x_4}\left( {{q^{\frac{1}{2}}}{D_0} + {q^{ - \frac{3}{2}}}\left( {{D_0} + {D_1}} \right) + {q^{ - \frac{7}{2}}}\left( {\sum\limits_{r = 0}^2 {{D_r}} } \right) + {q^{ - \frac{{11}}{2}}}\left( {\sum\limits_{r = 0}^3 {{D_r}} } \right) + {q^{ - \frac{{15}}{2}}}\left( {\sum\limits_{r = 0}^4 {{D_r}} } \right)} \right)\\
	&&	+ y_1^3x_2^2\left( {{D_0} + {q^{ - 1}}\left( {{D_0} + {D_1}} \right) + {q^{ - 2}}\left( {\sum\limits_{r = 0}^2 {{D_r}} } \right) + {q^{ - 3}}\left( {\sum\limits_{r = 0}^3 {{D_r}} } \right) + {q^{ - 4}}\left( {\sum\limits_{r = 0}^4 {{D_r}} } \right)} \right)\\
	&=& y_1^3x_3^2{x_4}\left( {{q^{\frac{1}{2}}}\sum\limits_{t = 0}^4 {{q^{ - 2t}}\left( {\sum\limits_{r = 0}^t {{D_r}} } \right)} } \right) + y_1^3x_2^2\left( {\sum\limits_{t = 0}^4 {{q^{ - t}}\left( {\sum\limits_{r = 0}^t {{D_r}} } \right)} } \right)\mathop  = \limits^{(\ref{(lemma3)})} 0,
	\end{eqnarray*}
and
	\begin{eqnarray*}
	&&	{D_0}y_1^4x_1^3 + {q^{ - 1}}\left( {{D_0} + {D_1}} \right)y_1^3x_1^3y_1^{} + {q^{ - 2}}\left( {\sum\limits_{r = 0}^2 {{D_r}} } \right)y_1^2x_1^3y_1^2\\
	&&	+ {q^{ - 3}}\left( {\sum\limits_{r = 0}^3 {{D_r}} } \right)y_1^{}x_1^3y_1^3 + {q^{ - 4}}\left( {\sum\limits_{r = 0}^4 {{D_r}} } \right)x_1^3y_1^4\\
	&	=& {D_0}{y_1}\sum\limits_{k = 0}^3 {{{\left[ {\begin{array}{*{20}{c}}
							3\\
							k
					\end{array}} \right]}_q}{q^{\frac{{{k^2}}}{2}}}x_2^{2(3 - k)}x_3^{2k}x_4^k}  + {q^{ - 1}}\left( {{D_0} + {D_1}} \right)\sum\limits_{k = 0}^3 {{{\left[ {\begin{array}{*{20}{c}}
							3\\
							k
					\end{array}} \right]}_q}{q^{\frac{{{k^2}}}{2}}}x_2^{2(3 - k)}x_3^{2k}x_4^k} y_1^{}\\
	&&	+ {q^{ - 2}}\left( {\sum\limits_{r = 0}^2 {{D_r}} } \right)\left( {\sum\limits_{k = 0}^2 {{{\left[ {\begin{array}{*{20}{c}}
								2\\
								k
						\end{array}} \right]}_q}{q^{\frac{{k^2}}{2}}}x_2^{2(2 - k)}x_3^{2k}x_4^k} } \right)\left( {{q^{ - \frac{1}{2}}}x_3^2{x_4} + x_2^2} \right)y_1^{}\\
	&&	+ {q^{ - 3}}\left( {\sum\limits_{r = 0}^3 {{D_r}} } \right)y_1^{}\sum\limits_{k = 0}^3 {{{\left[ {\begin{array}{*{20}{c}}
							3\\
							k
					\end{array}} \right]}_q}{q^{\frac{{k(k - 6)}}{2}}}x_2^{2(3 - k)}x_3^{2k}x_4^k} \\
	&&	+ {q^{ - 4}}\left( {\sum\limits_{r = 0}^4 {{D_r}} } \right)\sum\limits_{k = 0}^3 {{{\left[ {\begin{array}{*{20}{c}}
							3\\
							k
					\end{array}} \right]}_q}{q^{\frac{{k(k - 6)}}{2}}}x_2^{2(3 - k)}x_3^{2k}x_4^k} {y_1}\\
	&	= &{y_1}x_2^6\left( {{D_0} + {q^{ - 1}}\left( {{D_0} + {D_1}} \right)+{q^{ - 2}}\left( {\sum\limits_{r = 0}^2 {{D_r}} } \right) + {q^{ - 3}}\left( {\sum\limits_{r = 0}^3 {{D_r}} } \right) + {q^{ - 4}}\left( {\sum\limits_{r = 0}^4 {{D_r}} } \right)} \right)\\
		\end{eqnarray*}		
	\begin{eqnarray*}
	&&	+ {y_1}x_2^4x_3^2x_4^{}{\left[ {\begin{array}{*{20}{c}}
					3\\
					1
			\end{array}} \right]_q}\left( {{q^{\frac{1}{2}}}{D_0} + {q^{ - \frac{3}{2}}}\left( {{D_0} + {D_1}} \right) + {q^{ - \frac{7}{2}}}\left( {\sum\limits_{r = 0}^2 {{D_r}} } \right) + {q^{ - \frac{{11}}{2}}}\left( {\sum\limits_{r = 0}^3 {{D_r}} } \right)} \right.\\
	&&	\left. { + {q^{ - \frac{{15}}{2}}}\left( {\sum\limits_{r = 0}^4 {{D_r}} } \right)} \right) + {y_1}x_2^2x_3^4x_4^2{\left[ {\begin{array}{*{20}{c}}
					3\\
					2
			\end{array}} \right]_q}\left( {{q^2}{D_0} + {q^{ - 1}}\left( {{D_0} + {D_1}} \right) + {q^{ - 4}}\left( {\sum\limits_{r = 0}^2 {{D_r}} } \right)} \right.\\
	&&	+ {q^{ - 7}}\left( {\sum\limits_{r = 0}^3 {{D_r}} } \right)\left. { + {q^{ - 10}}\left( {\sum\limits_{r = 0}^4 {{D_r}} } \right)} \right) + {y_1}x_3^6x_4^3\left( {{q^{\frac{9}{2}}}{D_0} + {q^{\frac{1}{2}}}\left( {{D_0} + {D_1}} \right)} \right.\\
	&&	+ {q^{ - \frac{7}{2}}}\left( {\sum\limits_{r = 0}^2 {{D_r}} } \right) + {q^{ - \frac{{15}}{2}}}\left( {\sum\limits_{r = 0}^3 {{D_r}} } \right)\left. { + {q^{ - \frac{{23}}{2}}}\left( {\sum\limits_{r = 0}^4 {{D_r}} } \right)} \right)\\
	&	=& {y_1}x_2^6\left( {\sum\limits_{t = 0}^4 {{q^{ - t}}\left( {\sum\limits_{r = 0}^t {{D_r}} } \right)} } \right) + {y_1}x_2^4x_3^2x_4^{}{\left[ {\begin{array}{*{20}{c}}
					3\\
					1
			\end{array}} \right]_q}\left( {{q^{\frac{1}{2}}}\sum\limits_{t = 0}^4 {{q^{ - 2t}}\left( {\sum\limits_{r = 0}^t {{D_r}} } \right)} } \right)\\
			\end{eqnarray*}		
		\begin{eqnarray*}
	&&+	{y_1}x_2^2x_3^4x_4^2{\left[ {\begin{array}{*{20}{c}}
					3\\
					2
			\end{array}} \right]_q}\left( {{q^2}\sum\limits_{t = 0}^4 {{q^{ - 3t}}\left( {\sum\limits_{r = 0}^t {{D_r}} } \right)} } \right) + {y_1}x_3^6x_4^3\left( {{q^{\frac{9}{2}}}\sum\limits_{t = 0}^4 {{q^{ - 4t}}\left( {\sum\limits_{r = 0}^t {{D_r}} } \right)} } \right)\mathop  = \limits^{(\ref{(lemma3)})} 0.
	\end{eqnarray*}
\end{example}

\section*{Acknowledgments}
This work was supported by NSF of China (No. 12371036) and  Guangdong Basic and Applied Basic Research
Foundation (2023A1515011739).


\begin{thebibliography}{99}

\bibitem{I}
Macdonald I.G., Symmetric Functions and Hall Polynomials, second ed., Clarendon Press, Oxford, 1995.

\bibitem{CD}

Chen J.R., Deng B.M., Fundamental relations in Ringel-Hall algebras, J. Algebra., 320 (3) (2008), 1133--1149.


\bibitem{bfz}
Berenstein A., Fomin S., Zelevinsky A., Cluster algebras. {III}.~{U}pper bounds
 and double {B}ruhat cells, Duke Math.~J., 2005, 1--52.


\bibitem{AA}
Berenstein A., Zelevinsky A., Quantum cluster algebras, Adv. Math., 195 (2005),
405--455.

\bibitem{ck}
Caldero P., Keller B., From triangulated categories to cluster algebras, Invent. Math. 172 (2008), 169--211.

\bibitem{cdz} Chen X., Ding M., Zhang H., The cluster multiplication theorem for acyclic quantum cluster algebras,
Int. Math. Res. Not. IMRN 2023, no. 23, 20533--20573.


\bibitem{ca1}
Fomin S., Zelevinsky A., Cluster algebras. {I}.~Foundations, J.~Amer.
 Math. Soc., 15 (2002), 497--529.

\bibitem{ca2}
Fomin S., Zelevinsky A., Cluster algebras. {II}.~{F}inite type classification,
Invent. Math., 154 (2003), 63--121.


\bibitem{DXZ}
Ding M., Xu F.,  Zhang H., Acyclic quantum cluster algebras via Hall algebras of
morphisms.  Math. Z. 296, no. 3–4 (2020): 945-68.

\bibitem{FPZ}
Fu C., Peng L.,  Zhang H., Quantum cluster characters of Hall algebras revisited. Selecta
Math. (N.S.) 29, no. 1 (2023): Paper No. 4, 29 pp.

\bibitem{G1}
Lusztig G., Canonical bases arising from quantized enveloping algebras,
J. Amer. Math. Soc. 3 (1990), no. 2, 447-498.

\bibitem{G2}
Lusztig G., Quivers, perverse sheaves, and quantized enveloping algebras,
J. Amer. Math. Soc. 4 (1991), no. 2, 365-421.

\bibitem{G3}
Lusztig G., Total positivity in reductive groups, Lie theory and geometry,
Progr. Math., vol. 123, Birkh¨auser Boston, Boston, MA, 1994, pp. 531-568.



\bibitem{R1}
Ringel C.M., Hall algebras. In: Balcerzyk, S., et al. (eds.) Topics in Algebra, Part 1, vol. 26, pp. 433--447.
Banach Center Publication, Warsaw (1990).

\bibitem{R2}
Ringel C.M., Hall algebras and quantum groups. Invent. Math. 101, 583--592 (1990).

\bibitem{R3}
Ringel C.M., Hall algebras revisited, Israel Math. Conf. Proc. 7 (1993) 171--176.

\bibitem{R4}
Ringel C.M., Green's theorem on Hall algebras, CMS Conf. Proc., 19
Published by the American Mathematical Society, Providence, RI; for the , 1996, 185–-245.

\bibitem{XX}
Xiao J.,  Xu F., Green's formula with $\mathbb{C}^{*}$-action and Caldero-Keller's formula for cluster algebras,
in: Representation Theory of Algebraic Groups and Quantum Groups, in: Progr. Math., vol. 284,
Birkhauser/Springer, New York, 2010: 313-348.

\bibitem{X}
 Xu F.,  On the cluster multiplication theorem for acyclic cluster algebras, Trans. Amer. Math. Soc.
362(2) (2010), 753--776.

\bibitem{KP}
Victor K., Cheung P., Quantum Calculus, Springer, New York, 2002.
\end{thebibliography}
\end{document}